\newtheorem{theo}{Theorem}
\newtheorem{lemma}[theo]{Lemma}
\newtheorem{coro}[theo]{Corollary}
\newtheorem{prop}[theo]{Proposition}
\def\Z{\mathbb Z}\def\Q{\mathbb Q}\def\N{\mathbb N}\def\R{\mathbb R}
\def\cen{\operatorname{cen}} 
\def\dim{\operatorname{dim}} 
\def\card{\operatorname{card}}
\def\Int{\operatorname{int}} 
\def\aff{\operatorname{aff}} 
\def\Span{\operatorname{span}} 
\def\lcm{\operatorname{LCM}} 
\def\op{\operatorname{\Pi}} 
\def\oz{\operatorname{z}}
\def\ow{\operatorname{w}}
\def\oZ{\operatorname{Z}}
\def\oZp{\operatorname{Z^\prime}}
\def\cF{{\cal F}} \def\cG{{\cal G}}  \def\cS{{\cal S}}  \def\cL{{\cal L}}
\def\lpn{{\cal P}(\Z^n)} 
\def\clpn{[{\cal P}](\Z^n)} 
\def\lpt{{\cal P}(\Z^2)}
\def\lpm{{\cal P}(\Z^{n-1})} 
\def\lpth{{\cal P}(\Z^3)} 
\def\ckn{{\cal K}(\R^n)} 
\def\ckt{{\cal K}(\R^2)} 
\def\ckm{{\cal K}(\R^{n-1})} 
\def\cpn{{\cal P}(\R^n)} 
\def\cyl{\widetilde{T}_{n-1}} 
\def\cylt{\widetilde{T}_2} 
\newcommand{\dd}{\; \mathrm{d}}
\def\group{{\mathbb G}}
\def\sn{{{\mathbb S}^{n-1}}} 
\def\dst{\ell_1} 
\def\gln{\operatorname{GL}_n(\R)}
\def\slnz{\operatorname{SL}_n(\Z)} 
\def\slmz{\operatorname{SL}_{n-1}(\Z)}
\def\sltz{\operatorname{SL}_2(\Z)}
\def\slt{\operatorname{SL}_2(\R)}
\def\sln{\operatorname{SL}_n(\R)}
\def\sym{\operatorname{Sym}(T_*)} 
\begin{document}
\author{K\'aroly J. B\"or\"oczky and Monika Ludwig}

\title{Minkowski valuations on lattice polytopes}

\date{ }

\maketitle

\begin{abstract}
A complete classification  is established of Minkowski valuations on lattice polytopes  that intertwine the special linear group over the integers and are translation invariant. In the contravariant case, the only such valuations are multiples of projection bodies. In the equi\-variant case, the only such valuations are generalized difference bodies combined with multiples of the newly defined discrete Steiner point.

\bigskip

{\noindent
2000 AMS subject classification: 52B20, 52B45}
\end{abstract}

\section{Introduction and statement of results}

Two classification theorems were critical in the beginning of the theory of valuations on convex sets: first, the Hadwiger theorem \cite{Hadwiger:V} for valuations on convex bodies (that is, compact convex sets) in $\R^n$ and second, the Betke \& Kneser theorem \cite{Betke:Kneser} for valuations on lattice polytopes (that is, convex polytopes with vertices in $\Z^n$). In recent years, numerous  classification results were established for valuations defined on convex bodies (see, for example, \cite{Alesker01,Alesker99,   Bernig:Fu, Haberl:Parapatits_centro, Haberl:Parapatits_crelle, Ludwig:matrix, Ludwig:Reitzner, Ludwig:Reitzner2, Parapatits:Wannerer, Wannerer14} and  \cite{Hadwiger:V,Klain:Rota,McMullen93,McMullen:Schneider}  for more  information). In particular, such results were obtained for convex-body valued valuations (see, for example, \cite{AleskerFaifman, LiYuanLeng, Ludwig:projection, Ludwig:convex, Haberl_sln, Abardia:Bernig, Abardia, Abardia15, Schuster:Wannerer, Schuster09, Wannerer2011, Haberl:blaschke, Parapatits:co, Parapatits:contravariant}). The aim of this article is to establish classification results for convex-body valued valuations defined on lattice polytopes. The question leads us to define and classify the discrete Steiner point.

A function $\oz$ defined on a family ${\cal F}$ of subsets of $\R^n$ with values in an abelian group (or more generally, an abelian monoid) 
is a valuation if 
\begin{equation}\label{valdef}
\oz(P)+\oz(Q)=\oz(P\cup Q)+\oz (P\cap Q)
\end{equation}
whenever $P,Q,P\cup Q,P\cap Q\in{\cal F}$ and $\oz(\emptyset)=0$.

In the Hadwiger theorem, ${\cal F}$ is the family, $\ckn$, of convex bodies and important further results regard the family, $\cpn$, of convex polytopes in $\R^n$. In both cases, the spaces are equipped with the topology coming from the Hausdorff metric.  A functional $\oz: \ckn \to \R$ is rigid motion invariant, if it is translation invariant 
and invariant with respect to orthogonal linear transformations.

\begin{theo}[Hadwiger \cite{Hadwiger:V}]\label{hugo}
A functional $\oz:\ckn\to \R$ is  a continuous and rigid motion invariant valuation if and only if  there exist constants $\,c_0$, $c_1,\dots, c_n\in\R$ such that 
$$\oz(K) =  c_0 \,V_0(K)+\dots+c_n\,V_n(K)$$
for every $K\in\ckn$. 
\end{theo}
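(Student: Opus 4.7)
The ``if'' direction is routine: each intrinsic volume $V_i$ is a continuous, rigid-motion invariant valuation, as one reads off the Steiner formula together with the additivity of Lebesgue measure. The substance lies in the ``only if'' direction, which I would prove by induction on the dimension $n$. The base case $n=0$ is immediate; for $n=1$, a continuous translation-invariant valuation on compact intervals is determined by its values on $\{0\}$ and $[0,1]$, giving $c_0 V_0 + c_1 V_1$.

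For the inductive step, I would fix a hyperplane $H \subset \R^n$ and examine the restriction of $\oz$ to convex bodies in $H$, which is a continuous, rigid-motion invariant valuation on $\ckm$ (after identifying $H$ with $\R^{n-1}$). By the inductive hypothesis this restriction equals $\sum_{i=0}^{n-1} c_i V_i^H$, where $V_i^H$ is the $i$-th intrinsic volume computed inside $H$. The defining feature that intrinsic volumes are \emph{intrinsic} gives $V_i^H(K) = V_i(K)$ for $K \subset H$, and rotation- and translation-invariance of $\oz$ force the constants $c_i$ to be independent of $H$ (rotating one hyperplane into another). Therefore $\oz' := \oz - \sum_{i=0}^{n-1} c_i V_i$ is a continuous, rigid-motion invariant valuation vanishing on every convex body of dimension less than $n$; that is, $\oz'$ is \emph{simple}.

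The crux is then to show that such a simple valuation is a constant multiple of $V_n$ (Hadwiger's Volume Theorem). Homogeneity of degree $n$ comes first: dissect $[0,r]^n$ for $r \in \N$ into $r^n$ unit sub-cubes whose pairwise intersections are lower-dimensional, hence invisible to the simple $\oz'$, and iterate the valuation property to obtain $\oz'([0,r]^n) = r^n \oz'([0,1]^n)$; continuity then promotes this to $\oz'(tC) = t^n \oz'(C)$ for all $t \geq 0$ and all cubes $C$. Rotation invariance forces $\oz'(C) = c_n V_n(C)$ on every translated, rotated, rescaled cube $C$, with a single constant $c_n$. To extend to an arbitrary $K \in \ckn$, approximate $K$ in the Hausdorff metric by finite unions of small axis-aligned cubes; simplicity kills all lower-dimensional intersection terms when inclusion-exclusion is expanded via the valuation property, so $\oz'$ and $c_n V_n$ agree on each approximant, and continuity passes the identity to $K$.

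The main obstacle is precisely the Volume Theorem: transferring agreement from cubes to arbitrary convex bodies demands both that the cube-union approximations converge in Hausdorff metric and that the combinatorics of inclusion-exclusion be tamed—issues that cannot be resolved without genuine use of both simplicity (to discard boundary contributions) and continuity (to close the limit). A cleaner modern route replaces the ad hoc cube dissection by McMullen's homogeneous decomposition together with Klain's characterization of simple valuations via their Klain function on the relevant Grassmannian, where rigid-motion invariance forces that function to be constant and identifies the constant as $c_n$.
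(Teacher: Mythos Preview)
The paper does not prove this theorem; it is quoted as a classical result of Hadwiger, with Klain's short proof cited. So there is no paper-proof to compare against. Your overall architecture---induction on $n$, restriction to hyperplanes, subtraction to reduce to a simple valuation---is exactly Klain's setup and is sound.

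The genuine gap is in your Volume Theorem step. You write: approximate $K$ by finite unions of small cubes, apply inclusion--exclusion, and pass to the limit by continuity. But a finite union of cubes is not a convex body, so it does not lie in $\ckn$, and $\oz'$ is only defined on $\ckn$. One can extend $\oz'$ to the lattice of polyconvex sets (Groemer), but that extension is \emph{not} continuous in the Hausdorff metric on polyconvex sets, so the limit passage fails. This is precisely the obstruction that made Hadwiger's original proof long and Klain's argument a genuine simplification; your paragraph sketches the na\"{\i}ve route and then, in the last sentence, names the correct fix (McMullen's homogeneous decomposition plus Klain's embedding), but the body of the argument does not actually carry it out.

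A workable completion along Klain's lines: decompose $\oz'$ into homogeneous components $\oz'_0,\dots,\oz'_n$; simplicity forces $\oz'_i=0$ for $i<n$ since $\oz'_i$ vanishes on lower-dimensional bodies and its Klain function (restriction to $i$-planes) is therefore identically zero, whence Klain's injectivity for even valuations and a short parity argument kill $\oz'_i$; the remaining $\oz'_n$ is $n$-homogeneous, simple, translation invariant and continuous, and a standard cylinder-and-induction argument (not cube approximation) identifies it with $c_n V_n$. Your closing sentence points here; the middle paragraph should be replaced by it rather than supplemented.
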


\noindent
Here $V_0(K),\dots,V_n(K)$ are the intrinsic volumes of $K\in\ckn$. An elegant proof of this result is due to Klain \cite{Klain95} (or see \cite{Klain:Rota, Schneider:CB2}).

In the Betke \& Kneser theorem (and in this article),  ${\cal F}$ is the family, $\lpn$, of lattice polytopes.
A functional $\oz: \lpn \to \R$ is called translation invariant if $\oz(P+x)=\oz(P)$ for $x\in\Z^n$ and $P\in\lpn$. It is $\slnz$ invariant if $\oz(\phi P)=\oz(P)$ for $\phi\in\slnz$ and $P\in\lpn$,  where $\slnz$ is the special linear group over $\Z$, that is, the group of invertible $n\times n$ matrices with integer coefficients and determinant $1$. We remark that Betke \& Kneser formulated their theorem for unimodularly invariant valuations (that is, also admitting matrices with determinant $-1$) but that their proof also establishes the following result.

\begin{theo}[Betke \& Kneser \cite{Betke:Kneser}]\label{uli}
A functional $\,\oz\!:\lpn\to \R$ is  an $\,\slnz$ and translation invariant valuation if and only if  there exist constants $c_0$, $c_1,\dots, c_n\in\R$ such that 
$$\oz(P) =  c_0 \,L_0(P)+\dots+c_n\,L_n(P)$$
for every $P\in\lpn$. 
\end{theo}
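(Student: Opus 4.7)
The ``if'' direction is standard: the Ehrhart coefficients $L_0,\dots,L_n$, defined by $\card(kP\cap\Z^n)=\sum_{i=0}^n L_i(P)\,k^i$ for $k\in\N$, are translation invariant and $\slnz$-invariant because $\phi(\Z^n)=\Z^n$ for every $\phi\in\slnz$, and they are valuations by inclusion--exclusion for the counting measure $\card(\,\cdot\cap\Z^n)$. I focus on the converse, and proceed by induction on $n$. The base case $n=0$ is trivial since $\lpn$ reduces to the single polytope $\{0\}$. So assume the theorem holds in dimensions below $n$ and let $\oz\colon\lpn\to\R$ be an $\slnz$ and translation invariant valuation. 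The goal is to show that $\oz$ is determined by its values on the standard simplices $T^k=\operatorname{conv}\{0,e_1,\dots,e_k\}$ for $k=0,\dots,n$; once this is achieved, the $(n+1)\times(n+1)$ matrix $\bigl(L_i(T^j)\bigr)$ is non-singular, so constants $c_0,\dots,c_n$ can be chosen with $c_0L_0(T^j)+\dots+c_nL_n(T^j)=\oz(T^j)$ for each $j$, forcing equality on all of $\lpn$ by the reduction steps below.

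Every $P\in\lpn$ admits a lattice triangulation, so iterating the valuation identity together with inclusion--exclusion reduces the problem to evaluating $\oz$ on lattice simplices of dimensions $0,\dots,n$. For a lattice $k$-simplex with $k<n$, a suitable $\slnz$ transformation followed by a translation places its affine hull in $\R^k\times\{0\}^{n-k}$; restricting $\oz$ there yields an $\operatorname{SL}_k(\Z)$ and translation invariant valuation on the lattice polytopes of $\R^k$, which by the inductive hypothesis is a linear combination of $L_0,\dots,L_k$ and therefore is determined by $\oz(T^0),\dots,\oz(T^k)$. Only the full-dimensional case remains.

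For a full-dimensional lattice simplex $S$, translation invariance lets me write $S=\operatorname{conv}\{0,v_1,\dots,v_n\}$. If $n!\operatorname{vol}(S)=1$, then $(v_1,\dots,v_n)$ is a $\Z$-basis of $\Z^n$ and some $\phi\in\slnz$ sends $S$ to $T^n$, so $\oz(S)=\oz(T^n)$. If $n!\operatorname{vol}(S)>1$, I propose an induction on the positive integer $n!\operatorname{vol}(S)$: the plan is to dissect $S$, or an auxiliary lattice polytope constructed from $S$, into lattice pieces whose values are controlled either by strictly lower dimension or by strictly smaller volume, and then to solve for $\oz(S)$ using the valuation identity together with $\slnz$-invariance.

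The chief obstacle lies in this last dissection step. For $n\ge 3$ there exist $n$-dimensional empty lattice simplices of arbitrarily large volume, so one cannot simply cone $S$ from a non-vertex lattice point. The Betke--Kneser remedy is to enlarge $S$ by reflecting it across a facet or by attaching a suitable lattice translate of $S$, producing a larger lattice polytope that does admit a useful lattice subdivision, and then to isolate $\oz(S)$ from the resulting system of valuation identities. Verifying that this procedure actually decreases the induction parameter in all cases, including the empty-simplex ones, is the technical heart of the argument.
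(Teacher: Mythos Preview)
The paper does not actually prove this theorem; it is quoted as a result of Betke and Kneser. The nearest the paper comes to an argument is in Section~2.6, where it states (again without proof, citing \cite{Betke:Kneser}) Proposition~\ref{complementation}: for every lattice polytope $P$ there exist basic simplices $S_1,\dots,S_m$ and integers $k_1,\dots,k_m$ with $\oz(P)=\sum k_i\,\oz(S_i)$ for all group-valued valuations $\oz$. From this proposition the theorem follows at once, since the matrix $\bigl(L_i(T_j)\bigr)_{0\le i,j\le n}$ is lower triangular with non-zero diagonal. Your outline is exactly this route: reduce to values on $T_0,\dots,T_n$, then solve the linear system.

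Your reductions are sound as far as they go. Triangulating into lattice simplices, moving a $k$-simplex into $\R^k\times\{0\}$ by an $\slnz$ map and a translation, and invoking the inductive hypothesis there all work. The non-singularity of $\bigl(L_i(T^j)\bigr)$ is correct for the reason above.

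The genuine gap is the step you yourself flag as ``the technical heart'': the treatment of a full-dimensional lattice simplex $S$ with $n!\operatorname{vol}(S)>1$. You propose to induct on $n!\operatorname{vol}(S)$ and to ``enlarge $S$ by reflecting it across a facet or by attaching a suitable lattice translate'' so as to obtain valuation identities that isolate $\oz(S)$ in terms of pieces with strictly smaller parameter. But you give no concrete construction, no verification that the pieces are lattice polytopes, and no argument that the volume actually drops---and you acknowledge as much in your last sentence. For $n\ge 3$ this is the entire difficulty (empty simplices have no interior lattice point to cone from), and it is precisely the content of the Betke--Kneser paper. As written, your proposal is a correct \emph{plan} for the proof but not a proof; the paper, for its part, simply cites the result and provides no further detail to compare against.
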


\noindent
Here $L_0(P),\dots,L_n(P)$ are the Ehrhart functionals of $P\in\lpn$, that is, the coefficients of the Ehrhart polynomial (see Section \ref{secdiscSteiner} for the definition). 

\goodbreak

An operator $\oZ:{\cal F}\to \ckn$  is called a Minkowski valuation if $\oZ$ satisfies (\ref{valdef}) and addition on $\ckn$  is Minkowski addition; that is, $$K+L=\{x+y : x\in K,\, y\in L\}.$$  
An operator $\oZ:{\cal F}\to \ckn$  is called $\sln$ equivariant if $\oZ(\phi P)= \phi \oZ P$ for $\phi\in\sln$ and $P\in{\cal F}$. Define $\slnz$ equivariance of operators on $\lpn$ analogously.
In recent years, $\sln$ equivariant operators on convex bodies and the associated inequalities have attracted increased interest (see, for example, \cite{Campi:Gronchi,Haberl:Schuster1,LYZ2000,LYZ04,LZ1997,LYZ2010a}).
For valuations $\oZ:\cpn \to \ckn$ that are $\sln$ equivariant and translation invariant, a complete classification has been established.  Let $n\ge 2$.

\begin{theo}[\!\!\cite{Ludwig:Minkowski}]\label{equi_convex}
An operator $\,\oZ:\cpn\to \ckn$  is an  $\,\sln$ equi\-variant  and translation invariant  Minkowski valuation if and only if  there exists a constant $c\ge 0$ such that 
$$\oZ P=c(P -P)$$
for every $P\in\cpn$. 
\end{theo}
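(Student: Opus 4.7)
My plan is to classify $\oZ$ by determining its value on $k$-dimensional simplices by induction on $k$, then extending to all polytopes via triangulation and the valuation identity. For points, translation invariance yields $\oZ\{x\}=\oZ\{0\}$, and $\sln$ equivariance forces $\oZ\{0\}$ to be fixed by every $\phi\in\sln$; since $\sln$-orbits of nonzero vectors are unbounded, $\oZ\{0\}=\{0\}$. For a segment $S=[0,e_1]$, the stabilizer in $\sln$ of the point $e_1$ consists of matrices of the block form $\begin{pmatrix}1 & * \\ 0 & A\end{pmatrix}$ with $A\in\mathrm{SL}_{n-1}(\R)$, and the $A$-orbits on vectors outside $\R e_1$ are unbounded. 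Equivariance then forces $\oZ S\subset\R e_1$, so $\oZ S=[a\,e_1,b\,e_1]$ for some $a\le b$. The element $\mathrm{diag}(-1,-1,1,\dots,1)\in\sln$ maps $S$ to the translate $S-e_1$; combining equivariance with translation invariance yields $a=-b$, so $\oZ[0,e_1]=c\,[-e_1,e_1]=c([0,e_1]-[0,e_1])$ for some $c\ge 0$. The diagonal scaling $\mathrm{diag}(\lambda,\lambda^{-1},1,\dots,1)\in\sln$ together with further equivariance then propagates this to $\oZ S=c(S-S)$ for every segment.

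For simplices of dimension $k\ge 2$ I proceed by induction on $k$. Given $T=\mathrm{conv}\{0,v_1,\dots,v_k\}$, the affine symmetry group of $T$ contains transformations whose linear parts lie in $\sln$ and which cyclically permute the vertices (for example in dimension two, $(x,y)\mapsto(1-x-y,x)$ has linear part of determinant one and permutes $0,e_1,e_2$). By equivariance and translation invariance, $\oZ T$ is invariant under these symmetries. To actually pin $\oZ T$ down I apply the valuation identity to a dissection of $T$ in which the pieces are related by elements of $\sln$ and translations to simplices whose $\oZ$-values are captured by $c(P-P)$ via the inductive hypothesis (on dimension or on a finer complexity parameter). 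Solving the resulting Minkowski-sum equation, and using the cyclic symmetry to fix any remaining freedom, should force $\oZ T=c(T-T)$.

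Finally, every $P\in\cpn$ admits a triangulation into simplices, and an induction on the number of simplices, using the valuation identity $\oZ P_1+\oZ P_2=\oZ(P_1\cup P_2)+\oZ(P_1\cap P_2)$, transfers the formula from simplices to all polytopes. The main obstacle I anticipate is the inductive step for full-dimensional simplices: the finite affine symmetry group of a simplex alone does not determine $\oZ T$, and generic subdivisions produce pieces related by scalings outside $\sln$. Choosing dissections whose pieces are accessible to the inductive hypothesis, and carefully tracking the resulting Minkowski-sum equations—where one cannot freely subtract—will be the crux of the argument, considerably more delicate than the scalar situation of the Betke--Kneser theorem.
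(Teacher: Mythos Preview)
This theorem is not proved in the present paper; it is quoted from \cite{Ludwig:Minkowski} as background for the lattice-polytope results. There is therefore no proof in the paper to compare your proposal against.

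As for your outline itself: the treatment of points and segments is correct, and the reduction to simplices via an extension/inclusion--exclusion argument is the right overall architecture. But the heart of the matter---determining $\oZ T_k$ for $k\ge 2$---is not actually carried out in your proposal. You write that a suitable dissection ``should force $\oZ T = c(T-T)$'' and then explicitly flag this step as the main obstacle; that is an accurate self-assessment, not a proof. Two concrete difficulties you would have to overcome: first, any dissection of a $k$-simplex produces $k$-simplices of the same dimension, so a naive induction on $k$ does not reduce the problem; you need a different induction parameter or a direct structural argument. Second, the finite affine symmetry group of $T_k$ only tells you that $\alpha\,\oZ T_k$ is $\operatorname{Sym}(T_*)$-invariant, which (as the paper's own analysis of the lattice case in Sections~\ref{secnbigfacets}--\ref{secnbig} illustrates) still leaves a multi-parameter family of candidate bodies; ruling out the extra parameters requires a genuine computation with Minkowski summands and faces, not just symmetry. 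In the $\sln$ setting one has continuous scalings $\mathrm{diag}(\lambda,\dots,\lambda,\lambda^{1-n})$ that force homogeneity and collapse the two-parameter family $aT-bT$ to a single parameter, but you have not invoked this, and without it the inductive step remains open. In short, your plan is reasonable but is presently a sketch with the decisive step missing.
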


\noindent
The operator $P \mapsto P-P =\{x-y: x,y\in P\}$ assigns to $P$ its difference body.  We remark that no complete  analogue of Hadwiger's theorem for Minkowski valuations (that is, no complete classification of rotation equivariant and translation invariant Minkowski valuations) has been established. It follows from, for example, \cite{Schuster09} that the set of such valuations does not depend on only finitely many parameters.

The  aim of this article is to classify Minkowski valuations on lattice polytopes. The following result is an analogue of Theorem \ref{equi_convex}. Let $n\ge 2$. 

\begin{theo} \label{equi}
An operator $\,\oZ: \lpn\to \ckn$ is an $\,\slnz$ equivariant and translation invariant  Minkowski valuation if and only if there exist  constants $a$, $b\geq0$ such that 
$$
\oZ P=a(P-\dst(P))+b({-P}+\dst(P))
$$
for every $P\in\lpn$.
\end{theo}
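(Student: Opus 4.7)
The plan is to split into sufficiency and necessity. For sufficiency, I would verify directly that both $P\mapsto P-\dst(P)$ and $P\mapsto -P+\dst(P)$ are $\slnz$ equivariant translation invariant Minkowski valuations, and that nonnegative Minkowski combinations preserve these properties. This uses translation covariance $\dst(P+x)=\dst(P)+x$ and $\slnz$ equivariance $\dst(\phi P)=\phi\dst(P)$ of the discrete Steiner point, together with the fact (established in Section~\ref{secdiscSteiner}) that $\dst$ is itself a valuation in the additive sense; one then checks that Minkowski addition distributes through the valuation identity when a translation covariant scalar valuation is subtracted.

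For necessity, let $\oZ\colon\lpn\to\ckn$ be an arbitrary $\slnz$ equivariant, translation invariant Minkowski valuation. The base case is to show $\oZ(\{0\})=\{0\}$: by translation invariance $\oZ(\{x\})=\oZ(\{0\})$ for all $x\in\Z^n$, and by $\slnz$ equivariance $\phi\,\oZ(\{0\})=\oZ(\{0\})$ for all $\phi\in\slnz$. Since $\slnz$ acts on $\R^n$ without fixing any nontrivial compact convex set, $\oZ(\{0\})=\{0\}$. Next I would analyze $\oZ$ on lattice segments using $\slnz$ equivariance: reduction via $\slnz$ to the primitive segment $[0,e_1]$ and invariance under stabilizer elements in $\slnz$ that act on the hyperplane $e_1^\perp$ forces $\oZ([0,e_1])$ to lie in the line $\R e_1$, so it must be an interval. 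Translation invariance and the (known) location of $\dst([0,e_1])$ then pin down this interval to have the form $[-b,a]\cdot e_1$ relative to $\dst([0,e_1])$ for some $a,b\in\R$; compatibility with Minkowski addition forces $a,b\geq 0$.

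The core of the proof is to determine $\oZ$ on lattice simplices, following the strategy of Betke and Kneser. Any unimodular simplex is $\slnz$ equivalent up to a lattice translation to the standard simplex $T_n=[0,e_1,\dots,e_n]$. The affine stabilizer of $T_n$ in the group generated by $\slnz$ and $\Z^n$-translations realizes the full symmetric group $S_{n+1}$ on the vertices of $T_n$. Since $\oZ(T_n)$ translated so that $\dst(T_n)$ is at the origin must be $S_{n+1}$ invariant, and since the space of such convex bodies is spanned (as a Minkowski cone) by $T_n-\dst(T_n)$ and $-T_n+\dst(T_n)$ (two distinct orbits under the representation of $S_{n+1}$ on $\R^n$), one obtains
$$\oZ(T_n)=a\bigl(T_n-\dst(T_n)\bigr)+b\bigl(-T_n+\dst(T_n)\bigr)$$
with $a,b\geq 0$. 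The one-parameter case of Theorem~\ref{equi_convex} corresponds to the further collapse $a=b$ forced by the larger group $\sln$; it is precisely because $\slnz$ is discrete that the two summands are independent.

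The main obstacle is the extension to non-unimodular lattice simplices and then to all of $\lpn$. For an arbitrary lattice simplex $S$ of normalized volume $k>1$, I would dissect $S$ using a carefully chosen lattice hyperplane (or interior lattice point, as in Betke–Kneser) into lattice polytopes of smaller ``complexity,'' apply the valuation property (both of $\oZ$ and of $\dst$) to rewrite $\oZ(S)$ in terms of values on those pieces, and induct on $k$. The delicate point is that the Minkowski valuation identity permits only nonnegative combinations, so the induction must be arranged so that all signs absorb consistently into the pairing $(a,b)$; this is where the two-parameter structure is essential. Once the formula is verified on all lattice simplices, a triangulation argument combined with the valuation property extends it to arbitrary $P\in\lpn$, completing the proof.
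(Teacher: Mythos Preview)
Your central claim---that the cone of $A_{n+1}$-invariant, origin-centered convex bodies in $\R^n$ is Minkowski-generated by $\bar T_n=T_n-\cen(T_n)$ and $-\bar T_n$---is false, and this is precisely where the real work of the paper lies. Already for $n=3$ there are many $A_4$-invariant centered bodies outside this two-parameter family: the ball, intersections $s\,T_*\cap(-t\,T_*)$, and more generally the three-parameter family described in Proposition~\ref{ZStwo-faces} (see~\eqref{3DZS}) with an extra parameter $s_1\ge 0$ beyond $(a_1,b_1)$. Symmetry alone places no polytope constraint on $\oZ T_n$, and even after one knows (which itself requires an argument) that $\oZ T_n$ is a polytope whose facet normals occur among those of $T_n-T_n$, the body is not yet pinned down. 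The paper's route is to exploit the \emph{valuation} structure, not just the symmetry: one shows $\oZ[0,1]^n=[-c,c]^n$ (Proposition~\ref{cube-equi}), uses the decomposition $[0,1]^n=T_n\cup R_n$ to force $\oZ T_n$ to be a summand of a specific zonotope-plus-simplex body (Lemma~\ref{equiTn}), analyzes the face lattice of $Q_*=\alpha\,\oZ T_n$ in detail (Lemmas~\ref{simplex-face} and~\ref{mm+1m-1}, Proposition~\ref{ZStwo-faces}), and finally kills the extra parameter via the prism $\widetilde T_{n-1}=T_{n-1}+[0,e_n]$ and its explicit triangulation (Lemma~\ref{equitildeTn-1}, Corollary~\ref{equitildeTn-1cor}). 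None of this is recoverable from the stabilizer of $T_n$ alone.

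A secondary issue: your proposed extension to ``non-unimodular simplices'' by dissection and sign-tracking is unnecessary and would indeed be delicate as stated, since Minkowski addition admits no subtraction. The paper avoids this entirely: once $\oZ T_i=\oZ_{a,b}T_i$ is known for $i=0,\dots,n$, one applies Corollary~\ref{BetkeKneserinvariance}, which works by passing to the \emph{real-valued} valuations $P\mapsto h(\oZ P,v)$ for each fixed $v$, where Betke--Kneser (Proposition~\ref{complementation}) applies directly with arbitrary integer coefficients. So the hard part is genuinely the determination of $\oZ T_n$, and that requires the cube/prism machinery rather than a symmetry shortcut.
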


\noindent
Here for a lattice polytope $P$, the point $\dst(P)$ is its discrete Steiner point that is introduced in this paper. 

The discrete Steiner point is defined in Section \ref{secdiscSteiner} as the one-homogeneous part of the Ehrhart expansion of the discrete moment vector
$$\ell(P)= \sum_{x\in P\cap \Z^n}\! x.$$ 
That such an expansion exists follows from results by McMullen \cite{McMullen77}.
The discrete moment vector plays for $\slnz$ equivariant vector-valued valuations on $\lpn$ a role similar to that of the moment vector
$$m_{n+1}(K)=\int_K x\dd x$$
for rigid motion equivariant valuations on $\ckn$.

The discrete Steiner point is characterized in the following result, where $\oz: \lpn\to \R^n$ is called translation equi\-variant if $\oz(P+x)= \oz(P)+x$ for $x\in\Z^n$ and  $P\in\lpn$.

\begin{theo} \label{dst}
A function $\,\oz: \lpn\to \R^n$ is an $\,\slnz$ and translation equi\-variant valuation if and only if 
$\,\oz=\dst$.
\end{theo}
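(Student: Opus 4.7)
The ``if'' direction is a direct consequence of Section~\ref{secdiscSteiner}: from $\ell(\phi P)=\phi\,\ell(P)$ and $\ell(P+x)=\ell(P)+|P\cap\Z^n|\,x$, together with the Ehrhart expansion $\ell(kP)=\sum_iL_i^\ell(P)k^i$ (valid by McMullen~\cite{McMullen77}), the coefficient $\dst=L_1^\ell$ is seen to be an $\slnz$-equivariant valuation, and translation equivariance $\dst(P+x)=\dst(P)+x$ drops out of comparing coefficients using $L_0\equiv 1$.

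For the ``only if'' direction, set $w:=\oz-\dst$. Then $w\colon\lpn\to\R^n$ is an $\slnz$-equivariant valuation that is translation \emph{invariant}, and the task reduces to showing $w\equiv 0$. On singletons, $w(\{0\})=\phi\, w(\{0\})$ for every $\phi\in\slnz$; since for $n\ge 2$ the only vector of $\R^n$ fixed by all of $\slnz$ is zero, $w(\{0\})=0$, and translation invariance gives $w(\{x\})=0$ for every $x\in\Z^n$.

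The core step is to show $w(T_k)=0$ for every standard simplex $T_k=\operatorname{conv}(0,e_1,\dots,e_k)$, $k=0,\dots,n$, by exhausting the affine symmetry group of $T_k$ inside $\slnz\ltimes\Z^n$. When $k<n-1$, the block-diagonal matrices $\operatorname{diag}(I_k,A)$ with $A\in\operatorname{SL}_{n-k}(\Z)$ fix $T_k$ pointwise, which (as in the singleton argument applied to $\R^{n-k}$) forces the coordinates of $w(T_k)$ indexed by $e_{k+1},\dots,e_n$ to vanish; permutations of $e_1,\dots,e_k$ (realized in $\slnz$ after compensating sign flips on later coordinates) equate the first $k$ coordinates; and a vertex-swap involution on $T_k$ then annihilates the common value. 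The case $k=n-1$ proceeds similarly, with the shears $\begin{pmatrix} I_{n-1} & v \\ 0 & 1 \end{pmatrix}\in\slnz$ fixing $T_{n-1}$ pointwise and killing its last coordinate. The most structural case is $k=n$: the affine symmetries of $T_n$ are parametrized by permutations $\sigma\in S_{n+1}$ of the $n+1$ vertices, with linear part of determinant $\operatorname{sgn}(\sigma)$, so the ones with linear parts in $\slnz$ form the alternating group $A_{n+1}$; its induced representation on $\R^n$ is the restriction of the standard irreducible $n$-dimensional representation of $S_{n+1}$, which is irreducible over $\R$ for every $n\ge 2$ and admits no nonzero invariant vector. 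Hence $w(T_n)=0$.

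The main obstacle is extending $w\equiv 0$ from standard simplices to all lattice polytopes. By $\slnz$-equivariance and translation invariance, $w$ vanishes on every unimodular lattice simplex. To pass to general lattice polytopes---including ``empty'' lattice simplices of index at least $2$ in dimension $\ge 3$---one adapts the Betke--Kneser~\cite{Betke:Kneser} dissection: via the valuation property, translation invariance, and $\slnz$-equivariance, every lattice polytope is, in the appropriate Grothendieck group modulo the valuation relation, a $\Z$-linear combination of $\slnz$-translates of $T_0,\dots,T_n$. The subtlety is that Betke--Kneser's proof cannot be invoked coordinate-wise (each coordinate of $w$ is only translation invariant but not $\slnz$-invariant), so the dissection must be tracked compatibly with the vector-valued $\slnz$-equivariance.
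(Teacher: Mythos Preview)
Your proof is correct and takes a somewhat different route from the paper's in the central step. Both arguments set $w=\oz-\dst$ and aim to show $w(T_k)=0$ for all $k$, then invoke the Betke--Kneser dissection (Proposition~\ref{complementation}) to finish. The paper, however, does not argue directly on $T_k$: it first decomposes $w$ into homogeneous pieces $w_i$ via Theorem~\ref{trans-poly}, and then applies Lemma~\ref{cTm:lemma} to the \emph{dilated} simplex $(m+1)T_m$, whose centroid is integral so that the full symmetry group of the regular simplex becomes available after a lattice translation; homogeneity of $w_i$ then carries $(m+1)T_m$ down to $T_m$. Your approach bypasses both the homogeneous decomposition and the dilation trick by working directly with the affine symmetries of $T_k$ realized in $\slnz\ltimes\Z^n$; the price is a case split on $k$ and the observation that already the alternating subgroup $A_{n+1}$ has no nonzero fixed vector in the standard representation on $\R^n$. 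Both routes are clean: the paper's is shorter once Lemma~\ref{cTm:lemma} and Theorem~\ref{trans-poly} are in place, while yours is more self-contained.

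Your final paragraph overstates the difficulty of the extension step. Proposition~\ref{complementation} is stated for valuations with values in an \emph{arbitrary abelian group}, so it applies directly to the $\R^n$-valued valuation $w$: for any $P\in\lpn$ there are basic simplices $S_i$ and integers $k_i$, depending only on $P$, with $w(P)=\sum_i k_i\,w(S_i)$. Each $S_i$ is $\phi_i T_{j_i}+x_i$ for some $\phi_i\in\slnz$, $x_i\in\Z^n$, whence $w(S_i)=\phi_i\,w(T_{j_i})=o$. No ``compatible tracking'' is required. Even if one insisted on the real-valued version applied componentwise, the decomposition $(S_i,k_i)$ depends only on $P$ and not on the valuation, so the components reassemble---this is exactly the mechanism behind Corollary~\ref{BetkeKneserinvariance}.
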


\noindent
Theorem \ref{dst} corresponds to the following characterization  of the classical Steiner point $m_1$, which is the one-homogeneous part of the Steiner expansion of the moment vector  (see Section \ref{prelim} for the definition).

\begin{theo}[Schneider  \cite{Schneider72b}] \label{st}
A function $\,\oz: \ckn\to \R^n$ is a continuous and rigid motion equivariant valuation if and only if 
$\,\oz=m_1$.
\end{theo}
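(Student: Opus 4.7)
The plan is to verify directly that $m_1$ satisfies the stated properties and then tackle uniqueness. That $m_1$ is continuous, rigid-motion equivariant, and a valuation is standard: using the integral representation $m_1(K)=c_n\int_{S^{n-1}} h_K(u)\,u\,d\sigma(u)$, continuity follows from Hausdorff continuity of the support function, rotation equivariance from change of variables on $S^{n-1}$, translation equivariance from $h_{K+x}(u)=h_K(u)+\langle x,u\rangle$, and the valuation property from $h_{K\cup L}+h_{K\cap L}=h_K+h_L$ for pairs with convex union. So only uniqueness requires argument. Given any candidate $\oz$, I would form $\phi := \oz - m_1$, which is a continuous, $O(n)$-equivariant, and now translation-\emph{invariant} $\R^n$-valued valuation on $\ckn$; the goal becomes $\phi\equiv 0$.

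My first move is symmetry reduction. Whenever $gK = K$ for some $g\in O(n)$, equivariance places $\phi(K)$ in the fixed subspace of $g$; in particular $\phi$ vanishes on balls and (taking $g=-I$) on every centrally symmetric body. To transfer this vanishing to arbitrary $K$, I would apply McMullen's homogeneous decomposition of continuous translation-invariant valuations to each coordinate of $\phi$, writing $\phi = \sum_{k=0}^n \phi^{(k)}$ with each $\phi^{(k)}$ still continuous, translation-invariant, $O(n)$-equivariant, and $k$-homogeneous. The boundary degrees are immediate: $\phi^{(0)}$ is a constant vector fixed by $O(n)$, hence zero; and $\phi^{(n)}=v\cdot V_n$ for a fixed $v\in\R^n$, again zero.

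For the intermediate degrees $1\le k\le n-1$, I would represent each coordinate of $\phi^{(k)}$ by a Radon measure (via the support function for $k=1$, the surface area measure for $k=n-1$, and a Klain functional on $\mathrm{Gr}(k,n)$ in general), so that the full $\phi^{(k)}$ corresponds to an $\R^n$-valued measure. Rotation equivariance of $\phi^{(k)}$ then translates into $O(n)$-equivariance of this vector measure, and a Schur-type argument on the orbit structure of $S^{n-1}$ and $\mathrm{Gr}(k,n)$ forces the measure to vanish unless $k=1$, in which case it must be a scalar multiple of $u\,d\sigma(u)$, i.e., already absorbed into $m_1$. The hard part is this last step: combining the integral representation of continuous translation-invariant $k$-homogeneous valuations with the representation-theoretic vanishing of $O(n)$-equivariant vector-valued Radon measures on $\mathrm{Gr}(k,n)$ for $k\ne 1$. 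This is the clean structural reason that $m_1$ is the \emph{unique} vector-valued invariant living at degree one; the appeal to the McMullen--Alesker framework is heavier than what was available to Schneider in \cite{Schneider72b}, where the same conclusion was reached by direct functional-equation manipulation of support functions, but the underlying mechanism is identical.
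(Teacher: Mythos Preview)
The paper does not give its own proof of this theorem; it is quoted from Schneider \cite{Schneider72b} as background for the lattice analogue (Theorem~\ref{dst}). So there is nothing in the paper to compare against, and your proposal must stand on its own.

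Your overall architecture is correct, and the cases $k=0,1,n-1,n$ are handled properly. But there is a real gap in the intermediate degrees $2\le k\le n-2$ (hence for $n\ge 4$). You propose to represent each coordinate of $\phi^{(k)}$ by its Klain functional on $\mathrm{Gr}(k,n)$ and then kill it by a Schur argument. The problem is that the Klain map is injective only on the \emph{even} part $\mathrm{Val}_k^+$, whereas your components $\phi^{(k)}_i$ are \emph{odd}: applying $O(n)$-equivariance to $-I$ gives $\phi^{(k)}(-K)=-\phi^{(k)}(K)$, so each $\phi^{(k)}_i$ lies in $\mathrm{Val}_k^-$. The Klain function of any odd valuation is identically zero (the restriction to a $k$-plane is a multiple of $k$-volume, which is even), so your Schur argument on $\mathrm{Gr}(k,n)$ gives no information at all. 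For $k=1$ and $k=n-1$ you avoid this because you use measures on $S^{n-1}$ (via $h_K$ and $S_{n-1}(K,\cdot)$), where odd valuations do live, and the equivariant vector measure is forced to be $c\,u\,d\sigma(u)$; the Minkowski relation then kills the $k=n-1$ case and translation invariance of $\phi$ kills the residual multiple of $m_1$ at $k=1$. But the middle degrees need a different tool. One route is to invoke the $O(n)$-module structure of $\mathrm{Val}_k^-$ (Alesker--Bernig) and check that the standard representation does not occur for $2\le k\le n-2$; another is the generalized Minkowski relations $\int_{S^{n-1}} u\,dS_k(K,u)=0$ together with a representation of $\mathrm{Val}_k^-$ via area measures. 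Either way, your sentence ``a Klain functional on $\mathrm{Gr}(k,n)$ in general'' does not do the job.
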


A function $\oz:\lpn \to\R^n$ is called additive if $\oz(P+Q)= \oz(P) +\oz(Q)$ for $P,Q\in \lpn$. The discrete Steiner point is also characterized in the following result.

\begin{theo} \label{dst_add}
A function $\,\oz: \lpn\to \R^n$ is $\,\slnz$ and translation equi\-variant  and additive if and only if 
$\,\oz=\dst$.
\end{theo}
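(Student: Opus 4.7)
For the nontrivial direction I pass to the difference $\oz' := \oz - \dst$. Since $\oz$ and $\dst$ are both $\slnz$- and translation-equivariant, $\oz'$ is $\slnz$-equivariant and translation \emph{invariant} with values in $\R^n$; additivity of $\oz'$ will follow once I know that $\dst$ is itself additive. The latter should come from the McMullen-style polynomial behaviour of the discrete moment vector: for lattice polytopes $P, Q$ and positive integers $m_1, m_2$, $\ell(m_1 P + m_2 Q)$ is a polynomial in $(m_1, m_2)$. Specialising $m_2 = 0$ identifies the pure coefficient $c_{i,0}(P,Q)$ as $\ell_i(P)$ (and symmetrically), and matching the coefficient of $m^1$ in $\ell(m(P+Q))$ then yields $\dst(P+Q) = \ell_1(P) + \ell_1(Q) = \dst(P) + \dst(Q)$. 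Having reduced the problem to showing $\oz' \equiv 0$, the remainder of the argument is to pin down this function on a growing class of lattice polytopes.

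I start on singletons, where additivity combined with translation invariance forces $\oz'(\{x\}) = 0$. Next come primitive lattice segments $[0, v]$: $\slnz$-equivariance reduces to the case $v = e_1$; the shears in $\slnz$ stabilising $[0, e_1]$ force $\oz'([0, e_1]) \in \R e_1$; then the involution $\operatorname{diag}(-1, -1, 1, \ldots, 1) \in \slnz$, which is available because $n \ge 2$, combined with the identity $[-e_1, 0] = [0, e_1] - e_1$ and translation invariance, kills the remaining scalar. Integer multiples of segments, and hence all lattice zonotopes, follow by additivity.

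The next stratum is unimodular simplices, where Minkowski decomposition into segments is no longer available. For the standard simplex $T = \operatorname{conv}(0, e_1, \ldots, e_n)$, the affine cyclic permutation $\widetilde\phi$ of its $n+1$ vertices has linear part $\phi \in \slnz$ whose characteristic polynomial is $\lambda^n + \lambda^{n-1} + \cdots + 1 = (\lambda^{n+1} - 1)/(\lambda - 1)$, so $\phi$ has no nonzero fixed vector. Since $\widetilde\phi(T) = T$, combining $\slnz$-equivariance with translation invariance gives $\oz'(T) = \phi\, \oz'(T)$, forcing $\oz'(T) = 0$; equivariance extends the vanishing to all unimodular simplices.

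The main obstacle is propagating $\oz' \equiv 0$ from zonotopes and unimodular simplices to \emph{all} lattice polytopes. Typical Minkowski-indecomposable lattice polytopes such as $\operatorname{conv}(0, e_1, \ldots, e_{n-1}, k\, e_n)$ for $k \ge 2$ admit no useful $\slnz$-symmetry and cannot be written as Minkowski sums of simpler lattice polytopes, so fresh input is needed. My preferred approach is to show that the hypotheses (additivity, $\slnz$- and translation-equivariance) already imply the valuation identity (\ref{valdef}) on $\lpn$; once this is established, Theorem \ref{dst} applies directly and yields $\oz = \dst$. Verifying this implication is, I expect, the technical heart of the argument, likely by induction on dimension and on a combinatorial complexity measure, tracking how the Minkowski-additive structure of $\oz$ behaves when a lattice polytope is cut by a lattice hyperplane into smaller lattice pieces.
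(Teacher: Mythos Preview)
Your eventual strategy --- show that additivity implies the valuation identity and then invoke Theorem~\ref{dst} --- is exactly the paper's proof. However, you have badly misjudged where the difficulty lies. The implication ``additive $\Rightarrow$ valuation'' is not the technical heart; it is a one-line consequence of the elementary identity
\[
P+Q=(P\cup Q)+(P\cap Q),
\]
valid for convex bodies $P,Q$ whenever $P\cup Q$ is convex. (For $x\in P$, $y\in Q$, the segment $[x,y]\subset P\cup Q$ meets $P\cap Q$ at some point $z$; then $x+y=(x+y-z)+z$ with $x+y-z\in[x,y]\subset P\cup Q$ and $z\in P\cap Q$. The reverse inclusion is immediate.) Applying any additive $\oz$ to both sides gives the valuation identity directly --- no equivariance, no induction on dimension, no hyperplane cuts. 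The paper's entire proof of Theorem~\ref{dst_add} is the single sentence: ``Since any additive function on $\lpn$ is a valuation, Corollary~\ref{dst_additive} [and Theorem~\ref{dst}] imply the statement.''

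Consequently, your intermediate work on singletons, primitive segments, zonotopes, and unimodular simplices is unnecessary: once the valuation property is established (trivially), Theorem~\ref{dst} handles all lattice polytopes at once. Your argument for additivity of $\dst$ via the McMullen polynomiality of $\ell(m_1P+m_2Q)$ is correct and is precisely the paper's Corollary~\ref{dst_additive}.
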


\noindent
Theorem \ref{dst_add} corresponds to the following characterization  of the classical Steiner point.

\begin{theo}[Schneider  \cite{Schneider:steiner}] \label{st_add}
A function $\,\oz: \ckn\to \R^n$ is  continuous, rigid motion equivariant  and additive if and only if 
$\,\oz=m_1$.
\end{theo}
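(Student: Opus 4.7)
The plan is to prove the ``if'' direction by direct computation and the ``only if'' direction by pushing $\oz$ through the H\"ormander embedding $K\mapsto h_K$ into $C(\sn)$, representing it by an $\R^n$-valued measure on $\sn$, and pinning that measure down via $O(n)$-equivariance.

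For the ``if'' direction I would verify the needed properties on the explicit formula $m_1(K)=\kappa_n^{-1}\int_{\sn} h_K(u)\,u\dd\sigma(u)$: additivity follows from $h_{K+L}=h_K+h_L$, continuity from continuity of the support function, rotation equivariance from a change of variables on $\sn$, and translation equivariance from $h_{K+t}=h_K+\langle t,\cdot\rangle$ together with $\int_{\sn}\langle t,u\rangle u\dd\sigma(u)=\kappa_n t$. For the converse, suppose $\oz\colon\ckn\to\R^n$ is continuous, rigid motion equivariant, and additive. The first step is to extract positive homogeneity: from $\oz(K)+\oz(\{0\})=\oz(K+\{0\})=\oz(K)$ one obtains $\oz(\{0\})=0$; iterated additivity then gives $\oz(kK)=k\,\oz(K)$ for $k\in\N$, and continuity extends this to $\oz(\lambda K)=\lambda\,\oz(K)$ for every $\lambda\ge 0$. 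Since $h_{K+L}=h_K+h_L$ and $h_{\lambda K}=\lambda h_K$, I would define $T(h_K-h_L):=\oz(K)-\oz(L)$; this is well defined on the linear span $\cS\subset C(\sn)$ of support functions (if $h_K-h_L=h_M-h_N$ then $K+N=M+L$ and additivity finishes the job). A standard H\"ormander-type argument combining continuity of $\oz$ with the isometry between $(\ckn,d_H)$ and the cone of support functions in $C(\sn)$ shows that $T$ is bounded; moreover, $\cS$ is dense in $C(\sn)$, because every $C^2$ function on $\sn$ is a difference of support functions (its $1$-homogeneous extension becomes convex after adding a sufficiently large multiple of the Euclidean norm) and $C^2(\sn)$ is dense by Stone--Weierstrass. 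Hence $T$ extends uniquely to a continuous linear map $C(\sn)\to\R^n$, and the Riesz representation theorem (applied coordinatewise) produces an $\R^n$-valued Borel measure $\mu$ on $\sn$ with
\[
\oz(K)=\int_{\sn} h_K(u)\dd\mu(u).
\]

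The second step is to exploit rigid motion equivariance. Rotation equivariance $\oz(\phi K)=\phi\,\oz(K)$ combined with $h_{\phi K}(u)=h_K(\phi^{-1}u)$ translates into the identity $\phi_*\mu=\phi\circ\mu$ for every $\phi\in O(n)$, so the total variation $|\mu|$ is an $O(n)$-invariant finite Borel measure on $\sn$ and hence proportional to the spherical Lebesgue measure $\sigma$. Writing $\dd\mu(u)=\mathbf{m}(u)\dd\sigma(u)$, the equivariance becomes $\mathbf{m}(\phi u)=\phi\,\mathbf{m}(u)$ for $\sigma$-a.e.\ $u$ and every $\phi\in O(n)$. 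For fixed $u\in\sn$ the stabilizer in $O(n)$ is $O(n-1)$ acting on $u^\perp$, whose only fixed vectors in $\R^n$ are scalar multiples of $u$; combined with transitivity of the $O(n)$-action on $\sn$ this forces $\mathbf{m}(u)=c\,u$ for a single constant $c\in\R$. Thus $\oz(K)=c\int_{\sn} h_K(u)\,u\dd\sigma(u)$, and plugging a singleton $\{t\}$ into translation equivariance ($\oz(\{t\})=t$ and $h_{\{t\}}(u)=\langle t,u\rangle$) forces $c\kappa_n t=t$, so $c=\kappa_n^{-1}$ and $\oz=m_1$. The main technical hurdle is the density-and-boundedness step producing the measure $\mu$; once that is in place the $O(n)$-equivariance pins $\mu$ down essentially by Schur's lemma, implemented here as the pointwise stabilizer argument.
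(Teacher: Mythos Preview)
The paper does not give its own proof of this theorem; it is quoted as a classical result of Schneider \cite{Schneider:steiner}, stated only to put the discrete analogue Theorem~\ref{dst_add} in context. So there is no in-paper proof to compare against, and your outline is essentially the standard argument from the literature.

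That said, one step is thinner than you make it sound. You claim that ``continuity of $\oz$ together with the isometry $d_H(K,L)=\|h_K-h_L\|_\infty$'' makes the linear extension $T$ bounded on $C(\sn)$. Continuity of $\oz$ only gives, for each fixed $K$, a $\delta=\delta(K)$ with $d_H(K,L)<\delta\Rightarrow|\oz(K)-\oz(L)|<1$; nothing in what you wrote upgrades this to a uniform Lipschitz bound $|\oz(K)-\oz(L)|\le C\,d_H(K,L)$, and for an arbitrary $g=h_K-h_L$ with $\|g\|_\infty\le 1$ there is no reason the representing pair $(K,L)$ can be chosen in a fixed compact set. The clean fix is to first get $T$ as a continuous linear map on $C^2(\sn)$: for $f$ small in the $C^2$ norm, $h_{B^n}+f$ is a genuine support function of some body $K_f$ with $d_H(K_f,B^n)=\|f\|_\infty$, so continuity of $\oz$ at $B^n$ plus homogeneity yields a bound $|T(f)|\le C\|f\|_{C^2}$. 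The $O(n)$-equivariance argument you give then applies verbatim to this distribution and forces $T(f)=c\int_{\sn}f(u)\,u\,\mathrm{d}\sigma(u)$, which \emph{a posteriori} is bounded on $C(\sn)$. Alternatively, since Minkowski additivity implies the valuation property (via $K+L=(K\cup L)+(K\cap L)$ when $K\cup L$ is convex), one may simply invoke the valuation characterisation of the Steiner point (Theorem~\ref{st}). With either adjustment your argument goes through.
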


\goodbreak
For operators mapping $\lpn$ to $\lpn$, we obtain the following result.
Write $\lcm$ for the least common multiple and let $n\ge 2$. 

\begin{theo} \label{equii}
An operator $\,\oZ: \lpn\to \lpn$ is an $\,\slnz$ equivariant and translation invariant  Minkowski valuation if and only if there exist  integers $a$, $b\geq0$ with $b-a \in\lcm(2,\dots, n+1)\,\Z$ such that 
$$
\oZ P=a(P-\dst(P))+b({-P}+\dst(P))
$$
for every $P\in\lpn$.
\end{theo}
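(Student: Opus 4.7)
The plan is to deduce the statement from Theorem \ref{equi}. Since any $\oZ:\lpn\to\lpn$ is in particular an $\slnz$ equivariant and translation invariant Minkowski valuation into $\ckn$, Theorem \ref{equi} supplies real constants $a,b\geq 0$ with
$$
\oZ P=a(P-\dst(P))+b({-P}+\dst(P))=aP+b(-P)+(b-a)\dst(P),
$$
where the last summand denotes Minkowski translation by the point $(b-a)\dst(P)\in\R^n$. The remaining task is to determine for which $(a,b)$ the polytope $\oZ P$ belongs to $\lpn$ for every $P\in\lpn$.

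For necessity, I would test on the standard $k$-dimensional simplex $T_k=\operatorname{conv}(0,e_1,\dots,e_k)$ for $k=1,\dots,n$. A symmetry-plus-hockey-stick computation gives $\ell(mT_k)=\binom{m+k}{k+1}(e_1+\dots+e_k)$, so the coefficient of $m^1$ extracts $\dst(T_k)=\tfrac{1}{k+1}(e_1+\dots+e_k)$. Since $ae_1$ is a vertex of $aT_k+b(-T_k)$, translating by $(b-a)\dst(T_k)$ produces the vertex
$$
\tfrac{ak+b}{k+1}\,e_1+\tfrac{b-a}{k+1}\,(e_2+\dots+e_k)
$$
of $\oZ T_k$. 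For $k\geq 2$, lattice membership forces $\frac{ak+b}{k+1}\in\Z$ and $\frac{b-a}{k+1}\in\Z$; their difference equals $a$, so $a\in\Z$, hence $b\in\Z$ and $b-a\in(k+1)\Z$. The case $k=1$ similarly forces $a+b\in 2\Z$. Combining $k=1,\dots,n$ yields $a,b\in\Z_{\geq 0}$ and $b-a\in\lcm(2,3,\dots,n+1)\,\Z$.

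For sufficiency, assume $a,b\in\Z_{\geq 0}$ and $b-a\in\lcm(2,\dots,n+1)\,\Z$. Then $aP$, $b(-P)$ and their Minkowski sum $aP+b(-P)$ all lie in $\lpn$, so everything reduces to verifying $(b-a)\dst(P)\in\Z^n$. This is the main obstacle, and it follows from the denominator bound $\lcm(2,\dots,n+1)\,\dst(P)\in\Z^n$ for every $P\in\lpn$. I would establish this via Newton's forward-difference expansion: McMullen's cited result makes $\ell(mP)$ an $\R^n$-valued polynomial in $m$ of degree at most $n+1$, and $\ell(mP)\in\Z^n$ for every non-negative integer $m$, so
$$
\ell(mP)=\sum_{i=0}^{n+1}\binom{m}{i}\,\Delta^i\ell(0)\qquad\text{with}\quad\Delta^i\ell(0)\in\Z^n.
$$
A short direct calculation gives that the coefficient of $m^1$ in $\binom{m}{i}$ equals $(-1)^{i-1}/i$ for $i\geq 1$, whence
$$
\dst(P)=\sum_{i=1}^{n+1}\frac{(-1)^{i-1}}{i}\,\Delta^i\ell(0)\;\in\;\tfrac{1}{\lcm(2,\dots,n+1)}\,\Z^n,
$$
which completes the argument.
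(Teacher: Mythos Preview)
Your proof is correct and follows the same overall strategy as the paper: deduce the form $\oZ P=a(P-\dst(P))+b(-P+\dst(P))$ from Theorem~\ref{equi}, then test on the simplices $T_k$ to pin down the arithmetic constraints on $a,b$. Your necessity argument is essentially identical to the paper's (the paper also plugs in $P=T_k$ and reads off that $a,b\in\Z$ and $b-a\in(k+1)\Z$ from the vertices of $\oZ T_k$; it just quotes $\dst(T_k)=\cen(T_k)$ from Proposition~\ref{disc-steiner} rather than computing it via hockey--stick).

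Where you genuinely diverge is in the sufficiency direction, specifically in proving the denominator bound $\lcm(2,\dots,n+1)\,\dst(P)\in\Z^n$. The paper obtains this from the Betke--Kneser machinery: by Proposition~\ref{complementation} every valuation on $\lpn$ is an integer combination of its values on basic simplices, and by Proposition~\ref{disc-steiner} one has $\dst(T_k)=\tfrac{1}{k+1}(e_1+\dots+e_k)$, so the bound follows. You instead bypass Betke--Kneser entirely and argue directly from McMullen's polynomiality (Theorem~\ref{moment-poly}): since $m\mapsto\ell(mP)$ is a $\Z^n$-valued polynomial of degree at most $n+1$, its Newton forward-difference expansion has integral coefficients, and extracting the linear term gives $\dst(P)\in\tfrac{1}{\lcm(2,\dots,n+1)}\Z^n$. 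This is a nice, self-contained alternative that uses only the integrality and degree of $\ell(mP)$; the paper's route, on the other hand, ties the bound to the structural decomposition that is already central elsewhere in the article.
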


An operator $\oZ:{\cal F} \to \ckn$ is called 
$\sln$ contravariant if $\oZ(\phi P)= \phi ^{-t}\oZ P$ for $\phi\in\sln$ and $P\in{\cal F}$, where $\phi^{-t}$ is the inverse of the transpose of $\phi$. Define $\slnz$ contravariance of operators on $\lpn$ analogously. 
In recent years, $\sln$ contravariant operators on convex bodies and the associated inequalities have attracted increased interest (see, for example, \cite{Haberl:Schuster1,LYZ2000, LYZ2010b,  Boeroeczky2013}).
For $\sln$ contravariant Minkowski valuations on $\cpn$, a complete classification has been established.  Let $n\ge 2$. 

\begin{theo}[\!\!\cite{Ludwig:Minkowski}]
An operator $\,\oZ:\cpn\to \ckn$  is an  $\,\sln$ contra\-variant  and translation invariant  Minkowski valuation if and only if  there exists a constant $c\ge 0$ such that 
$$\oZ P=c \op P$$
for every $P\in\cpn$. 
\end{theo}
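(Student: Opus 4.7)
\emph{Sufficiency} is routine: the projection body operator $\op$ is a translation invariant Minkowski valuation on $\cpn$, and the identity $h(\op P, u) = \tfrac12 \int_{\sn} |u\cdot v| \dd S(P,v)$ together with the transformation law of the surface area measure under linear maps makes $\sln$ contravariance immediate. Any multiple by $c \ge 0$ preserves these properties.

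\emph{Necessity.} Assume $\oZ$ satisfies the hypotheses. I would follow the template of Ludwig's proof of Theorem \ref{equi_convex}, in three steps.

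First, I would show that $\oZ P = \{0\}$ whenever $\dim P < n$. Translation invariance gives $\oZ\{x\} = \oZ\{0\}$ for every $x$, and since every $\phi \in \sln$ fixes $0$, contravariance forces $\phi^{-t} \oZ\{0\} = \oZ\{0\}$ for all $\phi \in \sln$; the only bounded set invariant under the whole group $\{\phi^{-t}: \phi \in \sln\} = \sln$ is $\{0\}$. For a polytope $P$ contained in a hyperplane $H$, I would choose $\phi_\lambda \in \sln$ that acts as the identity on $H$ and stretches the transverse direction by $\lambda$: then $\phi_\lambda P = P$, while the compression of $H$ by $\phi_\lambda^{-t}$ combined with the boundedness of $\oZ P$ as $\lambda \to \infty$ confines $\oZ P$ to the transverse line, and an additional $\sln$ swap of orientations within that line forces $\oZ P = \{0\}$. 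Downward induction on $\dim P$ handles all lower-dimensional polytopes.

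Second, I would determine $\oZ T$ on a full-dimensional simplex. Up to translation, every such $T$ has a distinguished vertex at the origin and equals $\phi T_0$ for $T_0 = \mathrm{conv}\{0, e_1, \ldots, e_n\}$ and some $\phi \in \gln$; although $\phi$ need not lie in $\sln$, the affine symmetry group of $T_0$ contains the full alternating group $A_{n+1}$ on its $n+1$ vertices, each of whose linear parts lies in $\sln$. Invariance of $\oZ T_0$ under the $\phi^{-t}$ action of $A_{n+1}$, combined with the Minkowski valuation identity applied to a canonical dissection $T_0 = T' \cup T''$ by a median hyperplane, yields a functional equation that forces $\oZ T_0 = c\, \op T_0$ for some $c \ge 0$. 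Transporting back to $T$ via $\phi$ and a translation, and using McMullen's polynomial decomposition of the real valuation $\lambda \mapsto h(\oZ(\lambda T), u)$ (of degree at most $n-1$ thanks to Step 1) to confirm that the same $c$ works across volumes, one obtains $\oZ T = c\, \op T$ with a universal $c$.

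Third, I would extend to every $P \in \cpn$ by triangulation. Any $P$ admits a triangulation whose interior faces have dimension less than $n$ and so contribute nothing by Step 1; applying the valuation identity across this triangulation expresses $\oZ P$ as a Minkowski combination of the $\oZ T_i$, and since $\op$ satisfies the same vanishing and valuation identity, the equality $\oZ T_i = c\, \op T_i$ on each piece propagates to $\oZ P = c\, \op P$. The main obstacle is Step~2: distilling a one-parameter classification out of only $\sln$ (rather than $\gln$) symmetries of a simplex, since the stabilizer of $T_0$ inside $\sln$ is only the alternating group $A_{n+1}$; bridging this gap requires the valuation identity on a carefully chosen dissection together with an independent homogeneity argument, and these constitute the technical heart of Ludwig's original proof.
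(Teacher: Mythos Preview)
The paper does not prove this theorem; it is quoted from \cite{Ludwig:Minkowski} as background. So there is no in-paper proof to compare against directly, but the paper does prove the lattice analogue (Theorem~\ref{contra}), and the contrast with your sketch exposes a genuine error.

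Your Step~1 is wrong on two counts. First, the map you describe---identity on the hyperplane $H$ and a stretch by $\lambda$ in the transverse direction---has determinant $\lambda$ and hence is \emph{not} in $\sln$ unless $\lambda=1$. The correct maps that fix $H$ pointwise and lie in $\sln$ are shears $e_n\mapsto e_n+je_k$ (cf.\ the paper's Lemma~\ref{lowdim-contra}); under $\phi^{-t}$ these shears show only that $\oZ P$ lies in the transverse line, not that it is $\{o\}$. Second, and more seriously, the conclusion $\oZ P=\{o\}$ for all $\dim P<n$ is simply false: for $(n-1)$-dimensional $P$ with unit normal $w$ one has $\op P=|P|\,[-w,w]\neq\{o\}$, so any classification must allow $\oZ P=c\,|P|\,[-w,w]$ on codimension-one polytopes. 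Your ``orientation swap'' step cannot work, because a map in $\sln$ that reverses the transverse direction must act nontrivially on $H$ and will not fix a generic $(n-1)$-polytope $P$; at best it shows $\oZ P$ is an origin-symmetric segment, which is exactly what the correct statement says.

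This error propagates: in Step~3 the inclusion--exclusion over a triangulation of $P$ involves the $(n-1)$-dimensional interior faces, and these contribute nontrivial segments $c\,|F|\,[-u_F,u_F]$ that must be accounted for (compare Corollary~\ref{Zincl-excl-contra} in the paper). Your sketch of Step~2 is vaguer but also incompatible with Step~1 as written, since determining $\oZ T_n$ in the paper's argument (and in Ludwig's) relies on knowing the segment-valued behaviour on $(n-1)$-faces to set up the summand equation from the cube decomposition. The overall architecture---low-dimensional analysis, then the standard simplex, then extension---is the right one, but the low-dimensional step needs to be redone to yield the two-case statement of Lemma~\ref{lowdim-contra} rather than a blanket $\{o\}$.
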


\noindent
Here $\op P$ is the projection body of $P$ (see Section \ref{prelim} for the definition).

For operators on lattice polytopes, we obtain the following result.

\begin{theo} \label{contra}
(i) For $n=2$, an operator $\,\oZ:\lpt\to\ckt$ is an $\,\sltz$ contravariant and translation invariant  Minkowski valuation if and only if there exist constants $a,b\geq 0$ such that 
$$
\oZ P=a\,\rho_{\pi/2}(P-\dst(P))+b\,\rho_{\pi/2}({-P}+\dst(P))
$$
for every $P\in\lpt$.\\[2pt]
\noindent
(ii) For $n\geq 3$, an operator $\,\oZ:\lpn\to\ckn$ is an $\,\slnz$ contravariant and translation invariant  Minkowski valuation if and only if  there exists a constant $c\geq 0$ such that
$$\oZ P=c \op P$$ for every $P\in\lpn$.
\end{theo}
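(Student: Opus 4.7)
The plan follows the standard template for classifying $\slnz$-contravariant Minkowski valuations, adapted to lattice polytopes. The \emph{if} direction is a routine verification: $\op$ is the classical projection body, a translation-invariant Minkowski valuation that is $\sln$ contravariant. For the $n=2$ half of the \emph{only if} direction, the key observation is that $\rho_{\pi/2}\,\phi\,\rho_{\pi/2}^{-1}=\phi^{-t}$ for every $\phi\in\sltz$ (a direct $2\times 2$ computation using $\det\phi=1$). Hence $\oZ$ is $\sltz$ contravariant and translation invariant if and only if $\rho_{\pi/2}^{-1}\oZ$ is $\sltz$ equivariant and translation invariant; applying Theorem \ref{equi} in dimension $2$ to the latter immediately yields the claimed two-parameter family with the $\dst$ terms.

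It remains to prove (ii), the case $n\geq 3$, to which we now restrict. Fix $\oZ$ as in the hypothesis. The first step is to show $\oZ P=\{0\}$ whenever $\dim P<n$. For a point, $\oZ(\{0\})$ is a compact convex body invariant under $\phi^{-t}$ for every $\phi\in\slnz$; since $\slnz$ has no bounded nontrivial invariant subsets of $\R^n$ for $n\geq 2$, we get $\oZ(\{0\})=\{0\}$, and by translation invariance $\oZ(\{x\})=\{0\}$ for all $x\in\Z^n$. For the lattice segment $S=[0,e_1]$, the stabilizer of $S$ in $\slnz$ consists of matrices with first column $e_1$; writing such $\phi$ in block form and computing $\phi^{-t}$ shows the contragredient action preserves the hyperplane $e_1^\perp$ and acts on it by the full $\slmz$. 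Boundedness forces $\oZ S\subset e_1^\perp$, and then the $\slmz$ action on $e_1^\perp\cong\R^{n-1}$ (with $n-1\geq 2$) forces $\oZ S=\{0\}$. An inductive argument on dimension, using the appropriate stabilizer of each affine hull, extends this to all $P$ with $\dim P<n$.

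With degenerate polytopes contributing nothing, we turn to the full-dimensional case. For each $u\in\R^n$, the functional $\omega_u(P):=h(\oZ P,u)$ is a translation-invariant, real-valued valuation on $\lpn$; McMullen's polynomial expansion \cite{McMullen77} gives that $r\mapsto h(\oZ(rT^n),u)$ is a polynomial in $r$ of degree at most $n$, where $T^n=\operatorname{conv}\{0,e_1,\dots,e_n\}$ is the standard simplex. The Betke--Kneser framework reduces the determination of $\oZ$ to its values on the lattice dilates $rT^n$: every full-dimensional lattice polytope, modulo lower-dimensional terms (which contribute nothing by the vanishing step), can be written in the valuation ring as a $\Z$-linear combination of $\slnz$-images of such dilates. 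The body $\oZ(rT^n)$ must therefore be invariant under $\phi^{-t}$ for every $\phi$ in the affine stabilizer of $rT^n$ in $\slnz\ltimes\Z^n$.

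The main anticipated obstacle is the uniqueness step: for $n\geq 3$, one must show that the only compact convex bodies invariant under this contragredient stabilizer action and compatible with the Ehrhart-polynomial structure in $r$ are the multiples of $\op(rT^n)$. For $n\geq 3$, an explicit matrix construction shows the affine stabilizer of $T^n$ realizes the alternating group $A_{n+1}$ acting by permutations of the vertices $\{0,e_1,\dots,e_n\}$, with induced action on $\R^n$ being the standard (reflection) representation; combined with the degree constraints from McMullen's expansion and translation invariance, this symmetry cuts the invariant-body space down to a single line spanned by $\op(rT^n)=r^{n-1}\op T^n$. Finally, the valuation property and $\slnz$ contravariance extend the identification $\oZ(rT^n)=c\,\op(rT^n)$ to $\oZ P=c\,\op P$ for all $P\in\lpn$, with the Minkowski-valuation constraint yielding $c\geq 0$.
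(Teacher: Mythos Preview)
Your proposal has two genuine gaps in the $n\ge 3$ case. First, the vanishing claim is too strong: you assert $\oZ P=\{o\}$ for every lower-dimensional $P$, via an inductive stabilizer argument. That argument works for $\dim P\le n-2$, because the contragredient stabilizer of $\aff P$ acts on the normal directions through a copy of $\operatorname{SL}_{n-\dim P}(\Z)$ with $n-\dim P\ge 2$, and this group has no nonzero bounded invariant set. But for $\dim P=n-1$ the normal space is one-dimensional and the residual action is trivial, so nothing prevents $\oZ P$ from being a nondegenerate segment. Indeed the target valuation $c\,\op$ itself sends an $(n-1)$-dimensional $P$ with unit normal $w$ to $c\,|P|\,[-w,w]\ne\{o\}$ whenever $c>0$; your vanishing claim would already force $c=0$. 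In the paper (Lemma~\ref{lowdim-contra}) the constant $c\ge 0$ extracted from this codimension-one behaviour is precisely the projection-body multiple, so by discarding it you lose the entire classification.

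Second, the uniqueness step is unjustified. You claim that $A_{n+1}$-invariance of $\oZ T_n$ together with the polynomiality of $r\mapsto h(\oZ(rT_n),u)$ ``cuts the invariant-body space down to a single line spanned by $\op T_n$''. But there are infinitely many $A_{n+1}$-invariant convex bodies, and the degree bound constrains only how $\oZ(rT_n)$ varies with $r$, not the shape of $\oZ T_n$ for any fixed $r$. The paper supplies the missing constraint through a cube computation: one first proves $\oZ[0,1]^n=c\,\op[0,1]^n$ (Proposition~\ref{cube-contra}, via a simple-valuation argument on dilates), and then the splitting $[0,1]^n=T_n\cup R_n$ exhibits $\oZ T_n$ as a Minkowski summand of the zonotope $[-c,c]^n+\tfrac{c}{(n-1)!}[-w,w]$ with $w=-e_1-\dots-e_n$. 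A separate zonotope lemma (Lemma~\ref{zonotope}, which needs $n\ge 3$) then forces $\oZ T_n$ itself to be a zonotope with edges parallel to the facet normals of $T_n$, and only at that point does the $A_{n+1}$-symmetry pin down equal edge lengths, giving $\oZ T_n=c_0\,\op T_n$. (A minor further point: for $n=2$ you reduce to Theorem~\ref{equi}, but in the paper the planar equivariant case is \emph{deduced from} Theorem~\ref{contra}(i) via the same rotation identity, so as the paper is organized your reduction is circular.)
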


\noindent
Here $\rho_{\pi/2}$ denotes the rotation by an angle $\pi/2$ in $\R^2$. Note that for $n=2$, the projection body is obtained from the difference body by applying $\rho_{\pi/2}$.

\goodbreak
For operators mapping $\lpn$ to $\lpn$, we obtain the following result.

\begin{theo} \label{contrai}
(i) For $n=2$, an operator $\,\oZ:\lpt\to\lpt$ is an $\,\sltz$ contravariant and translation invariant  Minkowski valuation if and only if there exist integers $a,b\geq 0$  with $b-a\in 6\,\Z$ such that 
$$
\oZ P=a\,\rho_{\pi/2}(P-\dst(P))+b\,\rho_{\pi/2}({-P}+\dst(P))
$$
for every $P\in\lpt$.\\[2pt]
\noindent
(ii) For $n\geq 3$, an operator $\,\oZ:\lpn\to\lpn$ is an $\,\slnz$ contravariant and translation invariant  Minkowski valuation if and only if there exists a constant $c\geq 0$ with $c\in (n-1)!\,\Z$ such that
$$\oZ P=c \op P$$ for every $P\in\lpn$.
\end{theo}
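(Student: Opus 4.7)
The plan is to reduce Theorem \ref{contrai} to Theorem \ref{contra} and then determine precisely which of the continuous operators in that classification take $\lpn$ to $\lpn$.

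For part (ii), since $\lpn\subseteq\ckn$, Theorem \ref{contra}(ii) gives $\oZ P=c\,\op P$ for some $c\geq 0$. To show $c\in(n-1)!\,\Z$ is necessary, I evaluate $\oZ$ on the standard simplex $T_*$: its facets (the $n$ coordinate facets with outer normals $-e_i$ and $(n-1)$-volume $1/(n-1)!$, plus the diagonal facet with outer normal $\mathbf{1}/\sqrt{n}$ and $(n-1)$-volume $\sqrt{n}/(n-1)!$) yield the zonotope representation $\op T_*=\tfrac{1}{2(n-1)!}\bigl([-\mathbf{1},\mathbf{1}]+\sum_{i=1}^n[-e_i,e_i]\bigr)$ where $\mathbf{1}=(1,\ldots,1)$, which has the vertex $\mathbf{1}/(n-1)!$ in the direction $\mathbf{1}$; requiring $c\,\mathbf{1}/(n-1)!\in\Z^n$ forces $(n-1)!\mid c$. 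For sufficiency, for every facet $F$ of an arbitrary $P\in\lpn$ with outer primitive integer normal $w_F$, the vector $a_F=(n-1)!\,V_{n-1}(F)\,\nu_F$ lies in $\Z^n$ because $(n-1)!\,V_{n-1}(F)/\|w_F\|$ is the normalized lattice volume of $F$ (an integer) and $\nu_F=w_F/\|w_F\|$. Using the Minkowski relation $\sum_F V_{n-1}(F)\,\nu_F=0$ and translating each generator $[-a_F/2,a_F/2]$ of $(n-1)!\,\op P$ by $a_F/2$ then yields $(n-1)!\,\op P=\sum_F[0,a_F]$, a lattice zonotope; hence $c\,\op P\in\lpn$ whenever $c\in(n-1)!\,\Z_{\geq 0}$.

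For part (i), Theorem \ref{contra}(i) gives $\oZ P=\rho_{\pi/2}\bigl(aP+b(-P)+(b-a)\dst(P)\bigr)$ for some $a,b\geq 0$. For necessity, on $S=[0,e_1]$ with $\dst(S)=e_1/2$ the image $\oZ S=(a+b)[-e_2/2,e_2/2]$ is a lattice segment only if $a+b\in 2\,\Z$; on $T_*$ with $\dst(T_*)=(1,1)/3$, the hexagon $aT_*+b(-T_*)+\tfrac{b-a}{3}(1,1)$ has the vertex $((2a+b)/3,(b-a)/3)$, and lattice membership (preserved by $\rho_{\pi/2}\in\sltz$) forces $3\mid(b-a)$ and $a\in\Z$, hence also $b\in\Z$. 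Combining these yields $a,b\in\Z_{\geq 0}$ with $b-a\in 6\,\Z$. For sufficiency, given such $a,b$, the combination $aP+b(-P)$ is a lattice polygon, so it suffices to verify $(b-a)\dst(P)\in\Z^2$. I would prove the stronger fact that $6\,\dst(P)\in\Z^2$ for every $P\in\lpt$ by triangulating $P$ into unimodular lattice triangles and applying the valuation property of $\dst$ (Theorem \ref{dst}): inclusion-exclusion expresses $\dst(P)$ as a signed sum of $\dst$-values on unimodular triangles, lattice edges, and lattice vertices, and $\sltz$ plus translation equivariance reduces these contributions to $\dst(T_*)=(1,1)/3\in\tfrac13\Z^2$, $\dst([0,e_1])=e_1/2\in\tfrac12\Z^2$, and $\dst(\{0\})=0\in\Z^2$ respectively, so $\dst(P)\in\tfrac16\Z^2$.

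The main obstacle is precisely the denominator bound $6\,\dst(P)\in\Z^2$ for every $P\in\lpt$: setting up the inclusion-exclusion over a unimodular triangulation and carefully tracking the contribution of each lower-dimensional stratum requires some care, and the argument must confirm that no single stratum produces denominators larger than $\lcm(3,2)=6$. Once this bound is in place, the remainder of Theorem \ref{contrai} follows from the simplex and segment computations above combined with Theorem \ref{contra}.
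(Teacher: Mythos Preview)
Your proposal is correct and follows essentially the same route as the paper: reduce to Theorem~\ref{contra}, then test on standard simplices for necessity and use the projection-body zonotope formula together with the denominator bound on $\dst$ for sufficiency. Two simplifications are available from results already in the paper. First, your ``main obstacle'' $6\,\dst(P)\in\Z^2$ is immediate: Proposition~\ref{complementation} expresses $\dst(P)$ as an integer combination of $\dst$ applied to $\slnz$-translates of $T_0,T_1,T_2$, and Proposition~\ref{disc-steiner} gives $\dst(T_i)=\cen(T_i)\in\tfrac{1}{i+1}\Z^2$, so no ad hoc triangulation argument is needed. Second, for necessity in (ii) the paper evaluates on the degenerate simplex $T_{n-1}$, where $\op T_{n-1}=\tfrac{1}{(n-1)!}[-e_n,e_n]$ gives $(n-1)!\mid c$ in one line; your computation with the full-dimensional simplex works but is longer. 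Finally, a notational warning: in this paper $T_*$ denotes the \emph{regular} simplex $\alpha\bar T$ of Section~\ref{secn-2}, not the standard simplex $T_n=[o,e_1,\dots,e_n]$; you mean the latter.
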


\section{Preliminaries}\label{prelim}

We collect notation and results on convex bodies, valuations and lattice polytopes. General references are Schneider \cite{Schneider:CB2}, Gruber \cite{Gruber}, Barvinok \cite{Barvinok2008} and Beck \& Robins \cite{BeckRobins}.

Every convex body $K\in \ckn$ is determined by its support function,
$$h(K,v) = \max\{v\cdot x: x\in K\}$$
for $v\in\R^n$, where $v\cdot x$ is the inner product of $v,x\in \R^n$. 
Note that for $v\in\R^n$ we have
$$h(K+L,v) = h(K,v) +h(L,v).$$
Support functions of convex bodies are sublinear, that is, they are convex and positively homogeneous of degree 1,
and every sublinear function is the support function of a convex body in $\R^n$.

For $M\subset\R^n$, we denote the affine hull  by $\aff M$ and the dimension (that is, the dimension of $\aff M$) by $\dim(M)$. Define the centroid of a $k$-dimensional set $M$ with positive $k$-dimensional volume $V_k(M)$ by
$$\cen(M)=\frac 1{V_k(M)} \int_M x\dd H^k(x),$$
where $H^{k}$ is the $k$-dimensional Hausdorff measure.
We denote the convex hull of $x_1,\dots, x_k\in\R^n$  by $[x_1, \ldots, x_k]$.

\subsection{Minkowski summands}

Understanding the structure of summands is critical to our argument. A convex body $L$ is a summand of a convex body $K$ if there exists a convex body $M$ such that $K=L+M$.

For $L\in\ckn$ and $v\in\R^n\backslash\{o\}$, we define the face of $L$ having $v$ as one of its normal vectors by
$$
F(L,v)=\{x\in L:\,  v\cdot x=h(L,v)\}.
$$
It follows that if  $L, M \in\ckn$ and $s,t\geq 0$, then
\begin{equation}
\label{sum-face}
F(s\, L+t\, M,v) =s\, F(L,v)+t\, F(M,v).
\end{equation}
Note that if $K=L+M$ is a polytope, then so are $L$ and $M$.  Also note that the only summands of a simplex $S$ are translates of  $t\,S$ with $t\in[0,1]$ and that a summand of a direct sum of two convex bodies is the direct sum of summands of these bodies  (see \cite[Section 3.2]{Schneider:CB2}). Combined with (\ref{sum-face}) this implies the following.

\begin{lemma}
\label{summand}
Let $S$ be a simplex and $R$ a convex body with $\dim(S+R)=\dim(S)+\dim(R)$.
If a convex body $L$ is a summand of $S+R$, then there exist $t\in[0,1]$ and $R'\subset \aff R$ such that
$L$  is a translate of $\,t\,S+R'$.
\end{lemma}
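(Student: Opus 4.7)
The plan is to combine the two ingredients cited immediately before the lemma: (a) the only summands of a simplex $S$ are translates of $t\,S$ with $t\in[0,1]$, and (b) any summand of a direct sum of two convex bodies is itself the direct sum of summands of the factors.

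First I would reduce to a genuine direct-sum picture. Fixing base points $s_0\in S$ and $r_0\in R$, set $V_S=\Span(S-s_0)$ and $V_R=\Span(R-r_0)$. The hypothesis $\dim(S+R)=\dim(S)+\dim(R)$ is equivalent to $V_S\cap V_R=\{o\}$, so $(S-s_0)+(R-r_0)$ is the internal direct sum of two convex bodies lying in complementary linear subspaces $V_S$ and $V_R$.

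Now suppose $L+M=S+R$. Translating $L$ and $M$ in a coordinated way so that their sum becomes the above direct sum, fact (b) yields (up to translation) $L=L_S+L_R$ with $L_S\subset V_S$ a summand of $S-s_0$ and $L_R\subset V_R$ a summand of $R-r_0$. Fact (a) applied to the simplex $S-s_0$ then forces $L_S$ to be a translate of $t(S-s_0)$ for some $t\in[0,1]$, hence $L_S$ is a translate of $t\,S$. Setting $R':=L_R+r_0$, we have $R'\subset r_0+V_R=\aff R$, and assembling the pieces shows that $L$ is a translate of $t\,S+R'$, as required.

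The principal thing requiring care is translation bookkeeping: the direct-sum decomposition from \cite[Section 3.2]{Schneider:CB2} only determines $L_S$ and $L_R$ up to translations within $V_S$ and $V_R$, and one must absorb these translations into the final overall translate of $L$ while choosing the representative $R'$ inside $\aff R$. I do not anticipate any deeper obstacle, since the heart of the argument is delegated to the two cited facts, with (\ref{sum-face}) entering implicitly to guarantee that faces of the direct sum split compatibly with the decomposition in (b).
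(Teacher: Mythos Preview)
Your proposal is correct and follows exactly the approach the paper indicates: the paper's entire proof is the single sentence preceding the lemma, citing facts (a) and (b) from \cite[Section~3.2]{Schneider:CB2} together with (\ref{sum-face}), and you have faithfully unpacked that sentence, including the reduction to a genuine direct sum via the dimension hypothesis and the translation bookkeeping needed to place $R'$ inside $\aff R$.
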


\subsection{Projection bodies}

For $u\in\sn$ (where $\sn$ is the $(n-1)$-dimensional unit sphere), let $\pi_u$ denote the ortho\-gonal projection to the subspace orthogonal to $u$.
For $K\in\ckn$, the projection body $\op K$ is defined by
$$h(\op K, u)=\vert \pi_u K\vert $$
for $u\in\sn$, where $\vert\cdot\vert$ denotes $(n-1)$-dimensional volume.
Note that if $P$ is an $n$-dimensional polytope in $\R^n$ with facets (that is, $(n-1)$-dimensional faces) $F_1,\dots,F_m$, and corresponding facet normals (that is, exterior unit normals)
$u_1,\dots,u_m$, then
\begin{equation}
\label{projbodyP}
\op P=\frac12\sum_{i=1}^m|F_i|\,{[-u_i,u_i]}.
\end{equation}
The Minkowski relation states that
\begin{equation}
\label{Minkrel}
\sum_{i=1}^m|F_i|\,u_i=o.
\end{equation}
See \cite{Gardner} for more information on projection bodies.

\subsection{The Steiner point}

The intrinsic volumes that are characterized in Hadwiger's theorem are the coefficients of the Steiner polynomial, that is,
$$V_n(K+s \,B^n)= \sum_{j=0}^n s^{n-j} v_{n-j} V_j(K),$$
where $B^n$ is the $n$-dimensional Euclidean unit ball and $v_j$ is the $j$-dimensional volume of the $j$-dimensional Euclidean unit ball. The corresponding expansion for the moment vector is
$$m_{n+1}(K+s \,B^n)= \sum_{j=1}^{n+1} s^{n+1-j} v_{n+1-j} \,m_j(K).$$
The Steiner point, $m_1(K)$, can also be represented as
$$m_1(K) = \frac1{v_n} \int_{\sn} u\, h(K,u)\dd H^{n-1}(u).$$
For more information on Steiner points, see \cite[Section 5.4]{Schneider:CB2}.

\subsection{The inclusion-exclusion principle}

Betke (unpublished) and McMullen  \cite{McMullen09} extended (\ref{valdef}) to an inclusion-exclusion principle.
Let $\group$ be an abelian group.

\begin{theo}[McMullen \cite{McMullen09}]
\label{incl-ex0}
If  $\,\oz:\lpn \to \group$ is a valuation, then  for lattice polytopes $P_1,\dots,P_m$, 
$$
\oz(P_1\cup \dots\cup P_m)=
\sum_{\genfrac{}{}{0pt}{}{1\leq i_1<\dots<i_k\leq m}
{1\leq k\leq m}}
(-1)^{k-1} \oz(P_{i_1}\cap \dots\cap P_{i_k})
$$
whenever $P_1\cup \dots \cup P_m$ and 
all intersections of the form
$P_{i_1}\cap \dots\cap P_{i_k}$  are lattice polytopes.
\end{theo}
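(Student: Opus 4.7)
The plan is to reduce the claim to the classical pointwise inclusion-exclusion identity for indicator functions, via an extension of $\oz$ to the abelian group of $\Z$-linear combinations of indicator functions of lattice polytopes.

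Let $\mathcal{I}$ denote the abelian group of integer-valued functions on $\R^n$ generated by $\{\mathbbm{1}_P : P \in \lpn\}$. The main task is to construct a group homomorphism $\oz^*:\mathcal{I}\to\group$ with $\oz^*(\mathbbm{1}_P)=\oz(P)$ for every $P\in\lpn$. Equivalently, I must show that whenever $c_1,\dots,c_N\in\Z$ and $P_1,\dots,P_N\in\lpn$ satisfy $\sum_i c_i\,\mathbbm{1}_{P_i}\equiv 0$ as functions, then $\sum_i c_i\,\oz(P_i)=0$ in $\group$. This reduces to showing that the abelian group of such $\Z$-relations is generated by the \emph{pairwise valuation relations} $\mathbbm{1}_P+\mathbbm{1}_Q-\mathbbm{1}_{P\cup Q}-\mathbbm{1}_{P\cap Q}$, taken over quadruples $(P,Q,P\cup Q,P\cap Q)$ in $\lpn$; by the valuation hypothesis (\ref{valdef}), $\oz$ annihilates every such generator. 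To establish the generation statement I would pass to a common lattice simplicial refinement of the finite family $\{P_i\}$, which exists for any finite collection of lattice polytopes, express each $\mathbbm{1}_{P_i}$ as a signed sum of indicators of closed simplices of the refinement by iterated application of the pairwise valuation relation staying inside the lattice-polytope class, and then reduce the claim to a combinatorial identity on indicators of cells of the refinement.

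Granting $\oz^*$, the theorem follows immediately by applying $\oz^*$ to the pointwise identity
$$
\mathbbm{1}_{P_1\cup\dots\cup P_m}\;=\;\sum_{\emptyset\neq I\subseteq\{1,\dots,m\}}(-1)^{|I|-1}\,\mathbbm{1}_{\bigcap_{i\in I}P_i},
$$
since, by hypothesis, every indicator appearing on either side is the indicator of a lattice polytope. The principal obstacle is the well-definedness of $\oz^*$. A naive direct induction on $m$, using the valuation property for the pair $(P_1\cup\dots\cup P_{m-1},P_m)$, fails because the sub-union $P_1\cup\dots\cup P_{m-1}$ need not itself be a lattice polytope even when the full union is; this is precisely what forces the detour through indicator functions rather than an iterative application of (\ref{valdef}).
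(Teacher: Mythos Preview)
The paper does not give its own proof of this theorem: it is stated as McMullen's result \cite{McMullen09}. However, in the proof of the subsequent Corollary~\ref{incl-ex} the paper invokes precisely the mechanism you propose, quoting McMullen's Theorem~8.1(c): for any valuation $\oz:\lpn\to\group$ there is a homomorphism $[\oz]:\clpn\to\group$ with $[\oz](\mathbf{1}_P)=\oz(P)$, and one then applies this homomorphism to a pointwise identity of indicator functions. So your strategy is exactly the one underlying the cited result.

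That said, your proposal is a reduction rather than a proof. You correctly isolate the only nontrivial step---well-definedness of $\oz^*$, i.e.\ that every $\Z$-linear relation among indicators of lattice polytopes lies in the subgroup generated by the pairwise valuation relations---but your treatment of it is a one-sentence sketch (``pass to a common lattice simplicial refinement \dots\ and reduce to a combinatorial identity''). This step \emph{is} the theorem: it is the substance of McMullen's paper \cite{McMullen09}, and it is not routine. The difficulty is that, while a common simplicial subdivision of any finite family of lattice polytopes exists, expressing $\mathbf{1}_{P_i}$ in terms of the closed cells of that subdivision \emph{using only pairwise valuation relations within $\lpn$} requires care---at each stage the intermediate unions must remain lattice polytopes, which is exactly the obstruction you yourself flag in the final paragraph. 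McMullen handles this via a careful structural analysis of the lattice polytope group; your sketch does not indicate how you would navigate it. Until that gap is filled, the proposal identifies the right architecture but does not constitute a proof.
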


The inclusion-exclusion formula is actually needed for cell decompositions in this paper. We call a dissection of  the $n$-dimensional lattice polytope $Q$ into $n$-dimensional lattice polytopes $P_1,\dots,P_m$ a cell decomposition if  $P_i\cap P_j$ is either empty or a common face of $P_i$ and $P_j$ for any $1\leq i< j\leq m$. 
The faces  of the cell decomposition are the faces of all $P_i$ for $i=1,\dots,m$. Let $\Int Q$ denote the interior of $Q$. 

\begin{coro}
\label{incl-ex}
If  $\,\oz:\lpn \to \group$ is a valuation and $Q$ an $n$-dimensional lattice polytope, then
$$
\oz( Q)=
\sum_{\genfrac{}{}{0pt}{}{F\in{\cal F}}  {F\cap \Int Q \ne \emptyset}}
(-1)^{n- \dim (F)}\oz (F),
$$
where ${\cal F}$ is the set of  faces of a cell decomposition of $Q$ into lattice polytopes. 
\end{coro}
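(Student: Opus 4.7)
The plan is to apply Theorem \ref{incl-ex0} to the $n$-cells $P_1,\dots,P_m$ of the decomposition and then reorganize the resulting alternating sum by faces. In a cell decomposition, every nonempty intersection $P_{i_1}\cap\dots\cap P_{i_k}$ is a common face of the cells $P_{i_j}$ and hence belongs to $\mathcal F$; in particular, each such intersection is a lattice polytope, and the union is $Q$. The hypotheses of Theorem \ref{incl-ex0} are therefore met and
$$\oz(Q)=\sum_{\emptyset\ne S\subset\{1,\dots,m\}}(-1)^{|S|-1}\,\oz\!\Bigl(\bigcap_{i\in S}P_i\Bigr).$$

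For each $F\in\mathcal F$ put $N(F)=\{i:F\subset P_i\}$ and
$$a(F)=\sum_{\substack{\emptyset\ne S\subset N(F)\\ \bigcap_{i\in S}P_i=F}}(-1)^{|S|-1}.$$
Collecting the terms in the previous display by the face that the intersection equals gives $\oz(Q)=\sum_{F\in\mathcal F}a(F)\,\oz(F)$, so the corollary reduces to the purely combinatorial identity
$$a(F)=\begin{cases}(-1)^{n-\dim F} & \text{if } F\cap\Int Q\ne\emptyset,\\ 0 & \text{if } F\subset\partial Q.\end{cases}$$

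To prove this identity I would first note that summing $(-1)^{|S|-1}$ over all nonempty $S\subset N(F)$ gives $1$, and the same sum, regrouped by the face the intersection equals, computes $\sum_{F'\in\mathcal F,\,F'\supset F}a(F')$. Hence $\sum_{F'\supset F}a(F')=1$ for every $F$ with $N(F)\ne\emptyset$, and this triangular system determines $a(F)$ uniquely by downward induction on $\dim F$ from the maximal faces $P_i$, where $a(P_i)=1$. It therefore suffices to verify that the proposed formula satisfies the system. Using the characterization $F\cap\Int Q\ne\emptyset\iff F\not\subset\partial Q$ (valid since the relative interior of $F$ cannot meet a facet hyperplane of $Q$ unless $F$ lies entirely in that facet), the right-hand side becomes $\sum_{F'\supset F,\,F'\not\subset\partial Q}(-1)^{n-\dim F'}$. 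For an interior $F$ this is the Euler-characteristic computation for the star of $F$ in the full complex, whose link is a sphere of dimension $n-\dim F-1$, and one routinely obtains $1$; for a boundary $F$ one additionally uses that $\partial Q$ is an $(n-1)$-sphere carrying the induced cell decomposition, and the star of $F$ in $\partial Q$ has an analogous spherical link, which produces the required cancellation between boundary and interior superfaces.

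The main obstacle is exactly this star-and-link computation for boundary faces: one must split the star of $F$ into the subcomplex lying in $\partial Q$ and its complement in $\mathcal F$, and verify the balance using the spherical structure of $\partial Q$ together with the $(n-\dim F-1)$-dimensional spheres/balls arising as links. Once the local alternating-sum formula is in hand, the corollary follows immediately from the reorganized form of Theorem \ref{incl-ex0}.
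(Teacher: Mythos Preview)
Your approach is correct and ultimately rests on the same Euler-characteristic facts as the paper, but the reduction is genuinely different. The paper does not apply Theorem~\ref{incl-ex0} to the top cells and regroup; instead it invokes the stronger form of McMullen's result that every valuation $\oz$ extends to a group homomorphism $[\oz]$ on the abelian group generated by characteristic functions of lattice polytopes. This reduces the corollary in one stroke to the pointwise identity
\[
\mathbf{1}_Q=\sum_{\substack{F\in\mathcal F\\F\cap\Int Q\ne\emptyset}}(-1)^{n-\dim F}\,\mathbf{1}_F,
\]
which is then verified at each $x\in Q$ by a single Euler-characteristic computation on the cells containing $x$ and on the boundary complex of their union. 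In particular, interior and boundary points are handled simultaneously.

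What you gain with your route is that you use only Theorem~\ref{incl-ex0} as stated, without passing through the polytope group; the price is the M\"obius-inversion step and the need to split the link computation into an interior case and a boundary case. Your system $\sum_{F'\supseteq F}a(F')=1$ is exactly the pointwise identity above evaluated at a relative-interior point of $F$, so the two arguments really do converge at the end. The place where your write-up would need the most care is the boundary case: you must check that the faces of $\mathcal F$ lying in $\partial Q$ themselves form a polytopal cell decomposition of the $(n-1)$-sphere $\partial Q$, so that the link of a boundary face $F$ inside $\partial Q$ is an $(n-\dim F-2)$-sphere while its link in the full complex is an $(n-\dim F-1)$-ball; subtracting the two alternating sums then gives the required value $1$. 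Once that is written out, your proof is complete.
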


\begin{proof}
Let $\mathbf{1}_P$ be the characteristic function of $P\in\lpn$ and  $\clpn$  the additive abelian group generated by characteristics functions of lattice polytopes. McMullen \cite[Theorem~8.1(c)]{McMullen09} established the following form of the  inclusion-exclusion principle. For any valuation $\,\oz:\lpn \to \group$ there exists a homomorphism $[\oz]:\clpn\to \group$ such that
$\oz(P)=[\oz](\mathbf{1}_P)$ for $P\in\lpn$.

Hence  it suffices to show that
\begin{equation}\label{euler}
\mathbf{1}_Q= \sum_{\genfrac{}{}{0pt}{}{F\in{\cal F}}  {F\cap \Int Q \ne \emptyset}} (-1)^{n- \dim (F)}\mathbf{1}_ F.
\end{equation}
Clearly, (\ref{euler}) is true on the complement of $Q$. For $x\in Q$, let $\cS_x$ be the set of faces of $\cF$ that have non-empty intersection with $\Int Q$ and  contain $x$ and  let $\cL_x$ be the boundary complex of the set underlying $\cS_x$. 
For a family of polytopes $\cG$, define $\chi(\cG)$ as the number of even dimensional polytopes minus the number of odd dimensional polytopes in $\cG$. For the cell complex $\cS_x\cup \cL_x$, we obtain the Euler characteristic and $\chi(\cS_x) +\chi(\cL_x)=1$ since the underlying set is homeo\-morphic to an $n$-dimensional ball. This also implies $\chi(\cL_x)=1-(-1)^n$. Hence 
$$\sum_{F\in\cS_x} (-1)^{n- \dim (F)}= (-1)^{n} \chi(\cS_x)=(-1)^{n}( 1- \chi(\cL_x)),$$
which proves (\ref{euler}).
\end{proof}

\subsection{Triangulations}

Write $e_1,\dots,e_n$ for the standard orthonormal basis of $\R^n$, which generates $\Z^n$, and write $o$ for the origin. Define
$T_0=\{o\}$ and $T_i=[o,e_1,\dots,e_i]$ for $i=1,\dots,n$. We call a lattice simplex basic if it is obtained from $T_i$ for some $i=0,\dots, n$ by a map from $\slnz$ followed by a translation.

In addition, let $[0,1]^{i}=[o,e_1]+\dots+[o,e_i]$ be the standard 
$i$-dimensional unit cube. One of the main ideas in this paper is to  relate $\oZ T_n$ and $\oZ [0,1]^{n}$ for a Minkowski valuation $\oZ$ on $\lpn$. In order to do that, we write $R_n$ for the convex hull of all vertices of $[0,1]^{n}$ but $o$. Hence $R_n\cup T_n=[0,1]^{n}$ and $R_n\cap T_n=[e_1,\dots,e_n]$. Since $\oZ$ is a valuation, we get
\begin{equation}
\label{TQW}
\oZ[0,1]^{n} +\oZ [e_1,\dots,e_n]=\oZ T_n+\oZ R_n.
\end{equation}

In the case of $\slnz$ equivariant and translation invariant Minkowski valuations, we also need another specific cell decomposition involving $T_n$. For the prism $\cyl=T_{n-1}+[0,e_n]$, it will be useful to consider a cell decomposition of $\cyl$ into $n$ simplices $S_1,\dots,S_n$. Setting $e_0=o$, we define $S_1=T_n$ and
\begin{equation}
\label{Ttildedissect}
S_i=[e_0+e_n,\dots,e_{i-1}+e_n,e_{i-1},\dots,e_{n-1}] \mbox{ \ for $i=2,\dots,n$}.
\end{equation}
Note that  each $S_i$ is basic and  that $\dim (S_i\cap S_j)=n-1$ for $i<j$ if and only if $j=i+1$
(see, for example,  \cite[Section~2.1]{Hat02}).

We also require the following result (see, for example, \cite[Section 6.3]{DeLoera:Rambau:Santos}).

\begin{lemma}
\label{cube-triang}
There exists a triangulation of $\,[0,1]^{n}$ into $n!$ basic simplices  using only the vertices of the cube such that  $T_n$ is one of these simplices.
\end{lemma}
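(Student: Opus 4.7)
The plan is to proceed by induction on $n$, using the prism decomposition (\ref{Ttildedissect}) at each step. The base case $n=1$ is immediate since $[0,1]=T_1$ is itself a basic simplex. For the inductive step, assume $[0,1]^{n-1}$ admits a triangulation $\mathcal{T}'$ into $(n-1)!$ basic simplices with vertices in $\{0,1\}^{n-1}$ containing $T_{n-1}$, and fix a total order $\prec$ on $\{0,1\}^{n-1}$ whose initial segment is $o\prec e_1\prec\dots\prec e_{n-1}$.

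For each $T\in\mathcal{T}'$ with vertices $w_0\prec\dots\prec w_{n-1}$, decompose the prism $T+[o,e_n]\subset[0,1]^n$ by the staircase rule into the $n$ simplices
$$\sigma^T_i = [(w_0,1),\dots,(w_{i-1},1),(w_{i-1},0),(w_i,0),\dots,(w_{n-1},0)],\qquad i=1,\dots,n,$$
and take the collection of all $\sigma^T_i$ as the candidate triangulation of $[0,1]^n$. It has $(n-1)!\cdot n=n!$ members with vertices in $\{0,1\}^n$, and for $T=T_{n-1}$ the piece $\sigma^{T_{n-1}}_1$ is exactly $S_1=T_n$ from (\ref{Ttildedissect}).

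Three items remain. First, each $\sigma^T_i$ is basic: taking $(w_0,1)$ as base vertex, a column reduction of the $n\times n$ edge matrix (move the pivot column with last entry $-1$ to the end, subtract it from the subsequent columns to clear their last entries, expand along the last row, and restore the original differences $w_j-w_0$ by adding back earlier columns) reduces the determinant to that of $[w_1-w_0,\dots,w_{n-1}-w_0]$, which is $\pm 1$ because $T$ is basic by induction. Second, the $\sigma^T_i$ form a cell decomposition: within a single prism this is the standard staircase triangulation, and across prisms $T+[o,e_n]$ and $T'+[o,e_n]$ sharing the face $(T\cap T')+[o,e_n]$, the two staircase decompositions restrict to the same triangulation of this shared prism because the construction depends only on the $\prec$-ordering of the vertices of $T\cap T'$, which is identical from both sides. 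Third, the union is $[0,1]^n$, which is immediate since $\mathcal{T}'$ triangulates $[0,1]^{n-1}$ and the staircase pieces cover each prism.

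The main obstacle is compatibility between neighbouring prisms in step two: the canonical staircase rule, anchored to a single global order $\prec$ fixed at the outset, is precisely what forces the decompositions to agree on every shared face. The basic-ness check is then a short determinant reduction to the inductive assumption on $T$.
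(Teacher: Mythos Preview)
Your proof is correct. The paper does not supply a proof of this lemma at all; it merely cites \cite[Section~6.3]{DeLoera:Rambau:Santos} for the existence of such a triangulation. What you have written is a self-contained argument that spells out the standard inductive staircase construction: triangulate $[0,1]^{n-1}$ by induction, lift each $(n-1)$-simplex $T$ to the prism $T+[o,e_n]$, and cut each prism by the path-simplex rule governed by a single global linear order on $\{0,1\}^{n-1}$. This is precisely the construction treated in the cited reference, so your approach is not genuinely different---you have simply provided the details the paper omits.

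Two small remarks. First, your determinant computation shows the edge matrix has determinant $\pm1$; since a simplex is ``basic'' in the paper's sense whenever it is unimodular (one may always reorder two vertices to flip the sign), this suffices. Second, your compatibility claim across neighbouring prisms is correct but stated somewhat tersely: the precise fact is that the staircase triangulation of $T+[o,e_n]$ restricts on each lateral face $(T\cap T')+[o,e_n]$ to the staircase triangulation of that lower-dimensional prism, because removing the vertex $w_k\notin T\cap T'$ from $\sigma^T_i$ yields $\sigma^{T\cap T'}_{i}$ or $\sigma^{T\cap T'}_{i-1}$ (depending on whether $k\geq i$ or $k<i-1$), and for $k=i-1$ the intersection drops to codimension two. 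Since the global order $\prec$ is fixed once, both sides induce the same ordering on the vertices of $T\cap T'$, and hence the same restriction. You might want to add one sentence making this explicit.
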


\subsection{The Betke \& Kneser theorem}

Betke \cite{BetkeHabil} and
Betke \& Kneser  \cite{Betke:Kneser} proved Theorem \ref{uli} by using suitable dissections and complementations of lattice polytopes by lattice simplices.

\begin{prop}[Betke \& Kneser \cite{Betke:Kneser}]
\label{complementation} 
For every lattice polytope $P\in\lpn$ there exist basic simplices $S_1,\dots, S_m$ and integers $k_1,\dots,k_m$ such that
$$
\oz(P)=\sum_{i=1}^m k_i \oz(S_i)
$$
for all valuations $\oz$ on $\lpn$ with values in an abelian group.
\end{prop}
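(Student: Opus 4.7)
The plan is to argue in two stages: first reduce a lattice polytope to lattice simplices via triangulation and inclusion-exclusion, then reduce a lattice simplex to basic simplices by induction on a complexity measure.

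For the first stage, I would triangulate $P$ into lattice simplices using only lattice points of $P$; any such triangulation is a cell decomposition of $P$ into lattice polytopes in the sense of Corollary~\ref{incl-ex}, which then yields
\[
\oz(P)=\sum_{\genfrac{}{}{0pt}{}{F\in\cF}{F\cap\Int P\neq\emptyset}}(-1)^{n-\dim(F)}\,\oz(F),
\]
where each $F$ is a lattice simplex of dimension at most $n$. Thus it suffices to prove the conclusion for an arbitrary lattice simplex.

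For the second stage I would induct on a two-part complexity measure for a lattice $k$-simplex $S$: the dimension $k$, and within fixed $k\geq 1$ the normalized volume $\mu(S):=k!\,V_k(S)\in\Z_{\geq 1}$. The base case $\mu(S)=1$ holds essentially by definition of \emph{basic}: the edge vectors of $S$ then form a $\Z$-basis of the lattice induced on $\aff(S)$, which extends to a basis of $\Z^n$ by a suitable map in $\slnz$, so $S$ is basic. For the inductive step with $\mu(S)\geq 2$, after an $\slnz$ map and a translation I may take $S=[o,e_1,\dots,e_{k-1},w]$ with $w\in\Z^n$ whose $k$-th coordinate equals $\mu(S)$. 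Following Betke and Kneser, I would construct a cell decomposition of $S$ (possibly after adjoining a basic simplex along a facet) whose cells, together with all their faces, are either of strictly smaller dimension or of strictly smaller normalized volume than $S$; the conclusion for $S$ then follows from Corollary~\ref{incl-ex} combined with the inductive hypothesis.

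The principal obstacle is the inductive step. A lattice simplex with $\mu(S)>1$ need not contain any lattice point besides its vertices (as for $[o,e_1,e_2,(1,1,2)]\subset\R^3$), so subdividing $S$ using an interior lattice point is not always possible. The Betke--Kneser remedy is \emph{complementation}: one attaches to $S$ a basic simplex $T$ sharing a facet, chosen so that $S\cup T$ is a lattice polytope admitting a cell decomposition into lattice simplices of strictly smaller complexity; then the valuation identity $\oz(S)=\oz(S\cup T)+\oz(S\cap T)-\oz(T)$ isolates $\oz(S)$. Proving that such a complementing $T$ always exists and strictly reduces the inductive measure in every remaining case is the technical heart of the argument.
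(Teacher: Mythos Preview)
The paper does not give its own proof of this proposition; it is stated with attribution to Betke \& Kneser \cite{Betke:Kneser} and used as a cited result. Your outline is consistent with the structure of their argument (triangulation plus inclusion--exclusion to reduce to lattice simplices, then induction on dimension and normalized volume, with complementation to handle empty simplices), and you have correctly isolated the complementation step as the place where the real work lies.

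Two minor technical points on your sketch. First, Corollary~\ref{incl-ex} as stated in the paper applies only to an $n$-dimensional $Q$, so for lower-dimensional $P$ in the first stage, and for $k$-simplices with $k<n$ in the second, you must either work inside $\aff P$ (where the same argument goes through) or appeal directly to Theorem~\ref{incl-ex0}. Second, in your complementation step you require $S\cup T$ to be a lattice polytope; this convexity is not automatic when you glue along a facet, so the actual Betke--Kneser construction is somewhat more delicate than simply ``attach a basic simplex''---it involves a specific choice of auxiliary lattice point and a careful bookkeeping of which simplices appear with which signs. Since you already flag this as the technical heart and defer to \cite{Betke:Kneser}, your proposal is adequate as a sketch at the level the paper requires.
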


The following statement is a  direct consequence of this proposition.

\begin{coro}
\label{BetkeKneserinvariance}
If $\,\oZ, \oZp\!: \lpn \to \ckn$ are $\,\slnz$ equivariant (or $\,\slnz$ contravariant) and trans\-lation invariant Minkowski valuations such that 
\begin{equation}\label{bkeq}
\oZ T_i=\oZp T_i \,\text{ for }\,i=0,\dots,n,
\end{equation}
then $\oZ=\oZp$ on $\lpn$.
\end{coro}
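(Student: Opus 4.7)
The plan is to reduce the claim to the real-valued case via support functions, where Proposition \ref{complementation} can be applied componentwise. Fix $v\in\R^n$ and define
$$\oz_v(P) = h(\oZ P, v) - h(\oZp P, v).$$
Since $h(K+L,v)=h(K,v)+h(L,v)$, the Minkowski valuation property of $\oZ$ and $\oZp$ immediately implies that $\oz_v:\lpn\to\R$ is a valuation (with $\oz_v(\emptyset)=0$), so Proposition \ref{complementation} applies: for every $P\in\lpn$ there exist basic simplices $S_1,\ldots,S_m$ and integers $k_1,\ldots,k_m$ with $\oz_v(P)=\sum_{i=1}^m k_i\,\oz_v(S_i)$. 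Note that the signed integer coefficients cause no difficulty here precisely because we have linearized to $\R$.

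Next I would verify that $\oz_v(S)=0$ for every basic simplex $S$. By definition, $S=\phi T_j + x$ for some $j\in\{0,\ldots,n\}$, some $\phi\in\slnz$, and some $x\in\Z^n$. Using translation invariance and $\slnz$ equivariance of $\oZ$,
$$\oZ S = \oZ(\phi T_j) = \phi\,\oZ T_j,$$
and similarly $\oZp S=\phi\,\oZp T_j$. Since $\oZ T_j=\oZp T_j$ by the hypothesis \eqref{bkeq}, we get $\oZ S=\oZp S$, hence $h(\oZ S,v)=h(\oZp S,v)$, i.e.\ $\oz_v(S)=0$. The contravariant case is identical with $\phi$ replaced by $\phi^{-t}$ throughout. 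Combining with the complementation identity, $\oz_v(P)=0$ for every $P\in\lpn$.

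Since $v\in\R^n$ was arbitrary, the support functions of $\oZ P$ and $\oZp P$ agree on all of $\R^n$, which gives $\oZ P=\oZp P$ and completes the proof. The only conceptual point is the first step—passing from Minkowski-valued to real-valued valuations via support functions—so that the integer (possibly negative) coefficients in the Betke \& Kneser decomposition become meaningful; once that is in place, everything reduces to matching $\oZ$ and $\oZp$ on the finite list $T_0,\ldots,T_n$, which is exactly the hypothesis.
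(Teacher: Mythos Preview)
Your proof is correct and follows essentially the same approach as the paper: both pass to real-valued valuations via support functions in a fixed direction $v$, invoke Proposition~\ref{complementation} to reduce to basic simplices, and then use $\slnz$ equivariance (or contravariance) together with translation invariance to match $\oZ$ and $\oZp$ on basic simplices via the hypothesis~\eqref{bkeq}. The only cosmetic difference is that you work with the single valuation $\oz_v=h(\oZ\cdot,v)-h(\oZp\cdot,v)$, whereas the paper expands $h(\oZ P,v)$ and $h(\oZp P,v)$ separately using the same decomposition (which is legitimate since the simplices $S_i$ and integers $k_i$ in Proposition~\ref{complementation} depend only on $P$, not on the valuation).
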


\begin{proof}
For $v\in\R^n$, the functions $P\mapsto h(\oZ P, v)$ and $P\mapsto h(\oZp P, v)$ are real-valued valuations on $\lpn$.
For $P\in\lpn$, Proposition \ref{complementation} implies that there are basic simplices $S_1,\dots,S_m$ and integers $k_1,\dots,k_m$ such that
$$
h(\oZ P, v)=\sum_{i=1}^m k_i h(\oZ S_i, v) \,\text{ and } \,
h(\oZp P, v)=\sum_{i=1}^m k_i h(\oZp S_i, v).
$$
Since $\oZ$ and $\oZ'$ are $\slnz$ equivariant (or $\slnz$ contravariant) and translation invariant, (\ref{bkeq}) implies that $\oZ=\oZp$ on $\lpn$.
\end{proof}

\subsection{Translation invariant valuations}
We say that a valuation $\oz$ with values in an abelian semigroup  is homogeneous of degree $i\in\N$ 
if $\oz(kP)=k^i \oz (P)$ for $k\in\N$ and $P\in\lpn$, where $\N$ denotes the set of non-negative integers.

McMullen \cite{McMullen77} established the following theorem under the assumption of the inclusion-exclusion principle contained in Theorem~\ref{incl-ex0}, which 
he later established in \cite{McMullen09}.

\begin{theo}[McMullen]
\label{trans-poly}
If\,  $\oz:\lpn \to \R$ is a translation invariant valuation, then $\oz(kP)$ is a polynomial in $k\in\N$ of degree $\dim (P)$ for every $P\in \lpn$.
\end{theo}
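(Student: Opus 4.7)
I argue by induction on $d = \dim(P)$. The base case $d=0$ is immediate: $P$ is a single lattice point, so $kP$ is as well, and translation invariance gives $\oz(kP) = \oz(\{o\})$, a constant (a degree-$0$ polynomial in $k$).

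For the inductive step, assume the theorem for lattice polytopes of dimension less than $d$, and let $P$ be $d$-dimensional. First, I reduce to lattice simplices: choose a triangulation $P = S_1 \cup \cdots \cup S_N$ into $d$-dimensional lattice simplices $S_i$ with vertices among the vertices of $P$, and dilate by $k$ to obtain a cell decomposition of $kP$ whose top cells are the $kS_i$. Applying Corollary \ref{incl-ex} (taken in $\aff P$ with respect to the rank-$d$ sublattice $\Z^n \cap \aff P$, which becomes isomorphic to $\Z^d$ after translating by a lattice basepoint) writes $\oz(kP)$ as an alternating sum of $\oz(kF)$ over the faces $F$ of the dilated triangulation meeting the relative interior of $kP$. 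Faces of dimension strictly less than $d$ contribute polynomials in $k$ of degree less than $d$ by the inductive hypothesis, so it remains to prove the claim for a single $d$-dimensional lattice simplex.

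For such a simplex $S = [o, v_1, \dots, v_d]$ (translated so that $o$ is a vertex), I consider the sublattice $\Lambda = \Z v_1 + \cdots + \Z v_d \subseteq \Z^n$ and the standard Kuhn triangulation associated to $\Lambda$: dissect the fundamental parallelepiped of $\Lambda$ into $d!$ lattice simplices, then tile $\aff S$ periodically by $\Lambda$-translates. Restricting this tiling to $kS$ yields a cell decomposition in which, for every dimension $i$, the cells split into finitely many $\Lambda$-translation classes, each of cardinality polynomial in $k$ of degree at most $d$. Because $\Lambda \subseteq \Z^n$, translation invariance makes $\oz$ constant on each class, so a second application of Corollary \ref{incl-ex} to $kS$ with this finer dissection expresses $\oz(kS)$ as a finite integer combination of these polynomial counts---hence itself a polynomial in $k$ of degree at most $d$.

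The main obstacle is the combinatorial bookkeeping in the simplex step: for each dimension $i$ and each $\Lambda$-translation class, I must verify that the number of cells meeting $\Int(kS)$ is polynomial in $k$. Equivalently, one needs polynomial cell counts on each of the $d+1$ facets of $kS$, which can be handled by a nested induction on dimension since each facet is a lower-dimensional lattice simplex to which the inductive hypothesis applies.
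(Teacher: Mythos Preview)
The paper does not supply its own proof of Theorem~\ref{trans-poly}; it is quoted from McMullen \cite{McMullen77,McMullen09}, so there is no in-paper argument to compare against.

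Your reduction to simplices via a triangulation and Corollary~\ref{incl-ex} is sound, but the simplex step contains a genuine gap: the assertion that the $\Lambda$-periodic Kuhn (Freudenthal) triangulation restricts to a cell decomposition of $kS$ is false. In coordinates making $S=T_d=[o,e_1,\dots,e_d]$, the walls of the Kuhn tiling are the hyperplanes $x_i=c$ and $x_i-x_j=c$ with $c\in\Z$, and the hypotenuse facet $\{x_1+\cdots+x_d=k\}$ of $kT_d$ is not a union of Kuhn faces. Already for $d=2$ the boundary edge $[(2,0),(1,1)]$ of $2T_2$ passes through the interior of the Kuhn cell $[(1,0),(2,0),(2,1)]$; intersecting that cell with $2T_2$ yields a triangle with the non-lattice vertex $(\tfrac32,\tfrac12)$, on which $\oz$ is not even defined. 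Hence neither the claimed cell decomposition of $kS$ nor the ensuing polynomial cell count is available as written.

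The idea can be repaired with a different choice of coordinates. If instead of the basis $v_1,\dots,v_d$ of $\Lambda$ you pass to $w_i=v_1+\cdots+v_i$ (a unimodular change, so the same lattice), then $S$ becomes the fundamental Kuhn alcove $\{0\le y_1\le\cdots\le y_d\le 1\}$, whose $k$-dilate \emph{is} a union of alcoves because each of its facets lies in a wall $y_i=c$ or $y_i-y_j=c$. With this correction the translation-class argument goes through, though the ``polynomial bookkeeping'' you defer to a nested induction still has to be written out.
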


\goodbreak
As an application of Theorem~\ref{trans-poly}, we consider Minkowski valuations. The following construction goes back to  \cite{Ludwig:Minkowski}.

\begin{lemma}
\label{Minkowski-poly}
Let $\,\oZ\! :\lpn \to \ckn$ be a translation invariant Minkowski valuation. For $P\in\lpn$, there exists a convex body
$$\oZ_n\! P=\lim_{k\to\infty}\frac{\oZ(kP)}{k^n},$$
and $\,\oZ_n$ is a  Minkowski valuation on $\lpn$, which is homogeneous of degree $n$. If $\,\oZ$ is $\,\slnz$ equivariant or $\,\slnz$ contravariant, then so is $\oZ_n$.
\end{lemma}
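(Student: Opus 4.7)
The plan is to apply McMullen's polynomiality theorem (Theorem \ref{trans-poly}) coordinatewise to the support function. For each fixed $v \in \R^n$, the scalar function $P \mapsto h(\oZ P, v)$ is a translation invariant real-valued valuation on $\lpn$: translation invariance is inherited directly from $\oZ$, while the valuation identity follows by applying $h(\cdot, v)$ to $\oZ(P \cup Q) + \oZ(P \cap Q) = \oZ P + \oZ Q$ and using $h(K+L,v) = h(K,v) + h(L,v)$. Theorem \ref{trans-poly} then implies that for each $P \in \lpn$, the map $k \mapsto h(\oZ(kP), v)$ is a polynomial in $k \in \N$ of degree at most $\dim P \leq n$. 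Hence
$$
h_P(v) := \lim_{k \to \infty} \frac{h(\oZ(kP), v)}{k^n}
$$
exists and is finite for every $v$, being equal to the coefficient of $k^n$ in this polynomial (so $h_P \equiv 0$ when $\dim P < n$).

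Next I would verify that $h_P$ is the support function of a convex body, which will be the definition of $\oZ_n P$. Each rescaled function $h(\oZ(kP),\cdot)/k^n$ is positively homogeneous of degree $1$ and convex; these two properties are preserved under the pointwise limit $h_P$, which is finite on all of $\R^n$ by the previous step. A finite sublinear function on $\R^n$ is the support function of a unique convex body, so $\oZ_n P \in \ckn$ is well defined. Since pointwise convergence of support functions to a support function is equivalent to Hausdorff convergence of the associated convex bodies, it follows that $\oZ(kP)/k^n \to \oZ_n P$ in the Hausdorff metric.

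The remaining properties of $\oZ_n$ follow by passing to the limit in the corresponding identities for $\oZ$. For the Minkowski-valuation property, take support functions at any $v$ in the identity $\oZ(kP) + \oZ(kQ) = \oZ(k(P \cup Q)) + \oZ(k(P \cap Q))$, divide by $k^n$, and let $k \to \infty$. Translation invariance uses $k(P+x) = kP + kx$ with $kx \in \Z^n$ for $x \in \Z^n$, and $\slnz$ equivariance (respectively, contravariance) uses $\phi(kP) = k(\phi P)$ together with the corresponding identity for $\oZ$. Homogeneity of degree $n$ is the computation $\oZ_n(mP) = \lim_k \oZ((km)P)/k^n = m^n \lim_k \oZ((km)P)/(km)^n = m^n \oZ_n P$ for $m \in \N$.

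The main obstacle, then, is simply to guarantee finiteness of $h_P(v)$: this is precisely the content of Theorem \ref{trans-poly}, without which there would be no control on the growth of $h(\oZ(kP), v)$ in $k$. Once finiteness is secured, sublinearity of the limit, the existence of $\oZ_n P$, and the inheritance of all structural properties are automatic.
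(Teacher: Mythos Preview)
Your proof is correct and follows essentially the same route as the paper: apply Theorem~\ref{trans-poly} to the real-valued valuation $P\mapsto h(\oZ P,v)$ for each fixed $v$, extract the top-degree coefficient as a pointwise limit, observe that it is sublinear and hence a support function, and then read off the valuation, homogeneity, and equivariance/contravariance properties from the limit. Your write-up is slightly more detailed (e.g., the remark on Hausdorff convergence and the explicit subsequence computation for homogeneity), but there is no substantive difference in approach.
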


\begin{proof} For $v\in\R^n$, the function $P \mapsto h({\oZ P},v)$ is a real-valued valuation on $\lpn$, which is translation invariant as $\oZ$ is translation invariant. By Theorem~\ref{trans-poly}, there exist coefficients
$c_i(P,v)\in\R$, $i=0,\dots,n$,  for $v\in\R^n$ and $P\in \lpn$ such that
$$
h(\oZ(kP),v)=\sum_{i=0}^nc_i(P,v)k^i \mbox{ \ for $k\in\N$}.
$$
Hence the limit $c_n(P,v)=\lim_{k\to\infty}h({\oZ(kP)},v)/k^n$  exists for $v\in \R^n$ and $c_n(P,\cdot)$ is a sublinear function on $\R^n$. Therefore $c_n(P,\cdot)$ is the support function of a convex body, which we call $\oZ_n\! P$.
Since $\oZ$ is a Minkowski valuation, so is $Z_n$. In addition, for fixed $v$, the function $P \mapsto c_n(P,v)$ is homogeneous of degree $n$ in $P$ by Theorem~\ref{trans-poly}. Thus the same holds for $Z_n$.  The equivariance follows immediately from the definition. \end{proof}

\subsection{Transforming into a regular simplex}
\label{secn-2}

Because of Corollary \ref{BetkeKneserinvariance}, we concentrate on determining $\oZ T_n$ in the proof of Theorems \ref{equi} and \ref{contra}. We will make extensive use of the symmetries of $\oZ T_n$.
 
Let $\gln$ denote the group of general linear transformations on $\R^n$.  We write $T=T_n$ and set $\bar T= T-\cen(T)$, where $\cen(T)$ is the centroid of $T$.  We  fix a transformation $\alpha\in\gln$
such that $\alpha \bar T$ is the regular simplex $T_*$ of circumradius one, 
\begin{equation}\label{alpha}
\alpha \bar T=[v_0,\dots,v_n]=T_*,  
\end{equation}
where $v_0= -\cen( \alpha T)$ and $v_i=v_0+\alpha e_i$ for $i=1,\dots,n$. 
Let $\sym$ denote the group of orientation preserving isometries of the regular simplex $T_*$.
\goodbreak

Note that
\begin{equation}
\label{vpq0}
v_i \cdot v_j=\left\{
\begin{array}{rl}
1&\mbox{ if $i=j$,}\\
-\frac{1}n&\mbox{ if $i\neq j$}.
\end{array}\right.
\end{equation}
We set
$$
w_m=v_0+\dots+v_m\mbox{ \ for $m=0,\dots,n-1$}
$$ 
and obtain
\begin{equation}\label{facets1}
\begin{array}{rlc}
F(T_*,w_m)=[v_0, \dots,v_m], \\[6pt]
F(T_*,-w_m)=[v_{m+1},\dots,v_n],
\end{array}
\end{equation}
and for $m=1, \dots, n-1$,
\begin{equation}\label{facets2}
\begin{array}{rlc}
F([v_1,\dots,v_n],w_m)&=&[v_1,\dots,v_m], \\[6pt]
F([v_1,\dots,v_n],-w_m) &=&[v_{m+1},\dots,v_n].
\end{array}
\end{equation}
Note that all faces of $T_*$ are obtained as image of $F(T_*, w_m)$ by suitable maps from $\sym$ for all $m=0,\dots,n-1$.

In the equivariant case, generalized difference bodies are important for us.
A facet normal of the polytope $T_*-T_*$ is a positive multiple of 
$\sum_{i\in I}v_i$ where $I$ is a proper subset of $\{0,\dots,n\}$. Because of (\ref{sum-face}), the same holds true for the facet normals of $s\,T_*-t\,T_*$ with $s,t\ge0$. Since (in any dimension)
$$s\,T_*-t\,T_* = [s\,v_i -t\,v_j: i,j=0,\dots, n, i\ne j],$$
we get
\begin{equation}\label{Xist}
s\,T_*-t\,T_*= [\rho\, F(s\,T_*-t\,T_*, w_m): \rho\in \sym]
\end{equation}
for each $m=0,\dots,n-1$.
Indeed, the right side is clearly contained in $s\, T_*-t\,T_*$ and it follows from (\ref{facets1}) that each $s\,v_i -t\,v_j$ for $i\ne j$ is contained in the right side.

\section{The discrete Steiner point}
\label{secdiscSteiner}

For $P\in\lpn$,  let  $L(P)$ denote the number of lattice points in $P$, that is,
\begin{equation}\label{lpe}
L(P)=\sum_{x\in P\cap\Z^n}1.
\end{equation}
The function $L:\lpn \to \Z$ is a valuation that is invariant with respect to unimodular linear transformations.
In addition, if $z\in\Z^n$, then
\begin{equation*}
L(P+z)=L(P),
\end{equation*}
that is, $L$ is translation invariant. We call a function that is invariant with respect to unimodular linear transformations and translations by integer vectors unimodularly invariant.

\goodbreak
Ehrhart  \cite{Ehrhart62}  established the following result. 

\begin{theo}[Ehrhart]\label{Ehrhart}
There exist $L_i: \lpn \to \Q$ \,for $i=0,\ldots, n$ such that
$$L(kP)=\sum_{i=0}^{n}L_i(P)k^i$$ 
for $k\in\N$ and $P\in \lpn$. For each $i$, the functional $L_i$ is a unimodularly invariant valuation which is homogeneous of degree $i$.
\end{theo}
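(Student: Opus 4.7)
The plan is to derive Theorem~\ref{Ehrhart} as an immediate corollary of McMullen's polynomiality theorem (Theorem~\ref{trans-poly}). First, the lattice point enumerator $L:\lpn\to\Z$ is a translation invariant valuation in the sense of the paper: the valuation property is asserted in (\ref{lpe}) and the subsequent paragraph, and $\Z^n$-translation invariance is obvious. Thus Theorem~\ref{trans-poly} applies to $L$, producing, for each $P\in\lpn$, real numbers $L_0(P),\ldots,L_n(P)$ with $L_i(P)=0$ for $i>\dim(P)$ such that
$$L(kP)=\sum_{i=0}^n L_i(P)\,k^i \qquad\text{for all }k\in\N.$$
To upgrade $L_i(P)\in\R$ to $L_i(P)\in\Q$, I would specialise this identity to $k=1,\dots,n+1$ and invert the resulting Vandermonde system, whose coefficient matrix has nonzero integer determinant and whose right-hand side $L(kP)$ is an integer.

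Next I would verify that each $L_i$ inherits the valuation property and unimodular invariance from $L$ by coefficient comparison. If $P,Q,P\cup Q,P\cap Q\in\lpn$, then for any $k\in\N$ the lattice polytopes $kP,\,kQ,\,k(P\cup Q)=kP\cup kQ,\,k(P\cap Q)=kP\cap kQ$ all lie in $\lpn$, so
$$L(kP)+L(kQ)=L(k(P\cup Q))+L(k(P\cap Q)).$$
Inserting the Ehrhart expansions and using that two polynomials agreeing on all $k\in\N$ coincide coefficientwise yields $L_i(P)+L_i(Q)=L_i(P\cup Q)+L_i(P\cap Q)$; the normalisation $L_i(\emptyset)=0$ is immediate from $L(\emptyset)=0$. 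Similarly, given a unimodular $\phi$ and $z\in\Z^n$, the identity $L(k(\phi P+z))=L(\phi(kP)+kz)=L(kP)$ (using $kz\in\Z^n$) and coefficient comparison force $L_i(\phi P+z)=L_i(P)$.

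Homogeneity of degree $i$ is equally mechanical: for $m\in\N$,
$$\sum_{i=0}^n L_i(mP)\,k^i=L(k(mP))=L((km)P)=\sum_{i=0}^n L_i(P)\,m^i\,k^i,$$
and comparing coefficients in $k$ gives $L_i(mP)=m^i\,L_i(P)$. The only substantive ingredient is Theorem~\ref{trans-poly}, whose proof itself rests on the inclusion-exclusion principle (Theorem~\ref{incl-ex0}); once polynomiality of $k\mapsto L(kP)$ is granted, everything else reduces to coefficient matching in polynomial identities that hold on all of $\N$. Accordingly, the main obstacle lies entirely upstream, in McMullen's polynomiality result, which is cited here as a black box.
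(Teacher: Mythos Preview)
Your argument is correct. The paper, however, does not supply a proof of Theorem~\ref{Ehrhart} at all: it is stated with attribution to Ehrhart~\cite{Ehrhart62} as a classical result, and the subsequent sections merely use it. So there is nothing in the paper to compare against beyond the citation.

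Your approach---deriving Ehrhart's theorem as a specialisation of McMullen's polynomiality theorem (Theorem~\ref{trans-poly})---is a valid and efficient route within the paper's internal logic, since both results are cited as black boxes and McMullen's is strictly more general. The rationality argument via the Vandermonde system, the inheritance of the valuation property and unimodular invariance by coefficient comparison, and the homogeneity computation are all sound. The identities $k(P\cup Q)=kP\cup kQ$ and $k(P\cap Q)=kP\cap kQ$ that you use hold for any positive dilation of arbitrary sets, so no convexity subtlety arises there. Historically the dependence runs the other way (Ehrhart 1962, McMullen 1977), but as a derivation inside this paper your ordering is harmless.
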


\noindent Note that $L_n$ is the $n$-dimensional volume and $L_0$ is the Euler characteristic.

\goodbreak
A special case of a more general result by 
McMullen \cite[Theorem~6]{McMullen77} implies 

\begin{theo}[McMullen]
\label{lattice-polynomial} If $\,P_1,\dots, P_m\in\lpn$, then the number of lattice points $L(k_1 P_1 +\dots +k_m P_m)$ is a polynomial in $k_1,\dots, k_m\in\N$.
\end{theo}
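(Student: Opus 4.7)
The plan is to prove a slightly stronger statement by induction on $m$: for every translation invariant valuation $\phi\colon\lpn\to\R$ and every $P_1,\dots,P_m\in\lpn$, the function
$$(k_1,\dots,k_m)\longmapsto\phi(k_1P_1+\dots+k_mP_m)$$
is a polynomial on $\N^m$. Specializing to $\phi=L$ gives Theorem \ref{lattice-polynomial}, and the base case $m=1$ is exactly Theorem \ref{trans-poly}.

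The key ingredient is what I will call the \emph{shift lemma}: for any translation invariant valuation $\phi$ on $\lpn$ and any fixed $Q\in\lpn$, the functional $\phi_Q(P):=\phi(P+Q)$ is again a translation invariant valuation on $\lpn$. Translation invariance is immediate, as is $\phi_Q(\emptyset)=\phi(\emptyset+Q)=\phi(\emptyset)=0$. The valuation identity for $\phi_Q$ reduces, via the identity $\phi(R_1\cup R_2)+\phi(R_1\cap R_2)=\phi(R_1)+\phi(R_2)$ with $R_i=P_i+Q$, to the set-theoretic assertion
$$(P_1+Q)\cap(P_2+Q)=(P_1\cap P_2)+Q,$$
valid whenever $P_1\cup P_2$ is convex (the dual identity for unions is trivial). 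The inclusion $\supseteq$ is obvious; for $\subseteq$, given $x=p_1+q_1=p_2+q_2$ with $p_i\in P_i$ and $q_i\in Q$, the family $p_t:=tp_1+(1-t)p_2$, $q_t:=tq_1+(1-t)q_2$ satisfies $x=p_t+q_t$ for every $t\in[0,1]$, with $q_t\in Q$ by convexity of $Q$. Convexity of $P_1\cup P_2$ forces $p_t\in P_1\cup P_2$ for all $t$, so the closed sets $\{t\colon p_t\in P_1\}$ and $\{t\colon p_t\in P_2\}$ cover $[0,1]$; by connectedness they share some $t_\ast$ with $p_{t_\ast}\in P_1\cap P_2$, giving $x\in(P_1\cap P_2)+Q$.

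For the inductive step, fix $P_1,\dots,P_m$ and $k_2,\dots,k_m\in\N$ and set $Q:=k_2P_2+\dots+k_mP_m\in\lpn$. By the shift lemma, $\phi_Q$ is a translation invariant valuation, so Theorem \ref{trans-poly} supplies an expansion
$$\phi(k_1P_1+Q)=\sum_{j=0}^{d}c_j(k_2,\dots,k_m)\,k_1^{j}$$
of degree $d\le\dim P_1$ in $k_1$. Lagrange interpolation at $k_1=0,1,\dots,d$ expresses each coefficient $c_j(k_2,\dots,k_m)$ as a fixed $\R$-linear combination of the values $\phi(iP_1+k_2P_2+\dots+k_mP_m)$ for $i=0,\dots,d$. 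Each such value equals $\phi^{(i)}(k_2P_2+\dots+k_mP_m)$, where $\phi^{(i)}(R):=\phi(iP_1+R)$ is once again a translation invariant valuation by the shift lemma. The inductive hypothesis applied to $\phi^{(i)}$ with the $m-1$ summands $P_2,\dots,P_m$ gives polynomial dependence on $(k_2,\dots,k_m)$, hence every $c_j$ is polynomial in $(k_2,\dots,k_m)$, and joint polynomiality on $\N^m$ follows.

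The main obstacle is the shift lemma, more precisely the set identity $(P_1+Q)\cap(P_2+Q)=(P_1\cap P_2)+Q$ under convexity of $P_1\cup P_2$: this is the only non-formal step, and without it the induction cannot get off the ground since valuation identities are typically destroyed by Minkowski shifts. Everything else is routine polynomial bookkeeping built on Theorem \ref{trans-poly}.
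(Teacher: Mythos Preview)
The paper does not actually give its own proof of Theorem~\ref{lattice-polynomial}; it is quoted as ``a special case of a more general result by McMullen \cite[Theorem~6]{McMullen77}'' and used as a black box. So there is nothing in the paper to compare your argument against directly.

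That said, your argument is correct and is in fact the standard way to bootstrap the multi-parameter statement from the single-parameter Theorem~\ref{trans-poly}. The shift lemma is the only point that requires care, and your proof of the identity $(P_1+Q)\cap(P_2+Q)=(P_1\cap P_2)+Q$ under the hypothesis that $P_1\cup P_2$ is convex is clean and complete; note also that $(P_1\cup P_2)+Q=(P_1+Q)\cup(P_2+Q)$ is a lattice polytope, so the valuation identity for $\phi$ may legitimately be applied to the pair $P_1+Q,\,P_2+Q$. One small point worth making explicit: the degree bound $d\le\dim P_1$ coming from Theorem~\ref{trans-poly} is uniform in $Q$ (it depends only on $P_1$), so the interpolation nodes $k_1=0,1,\dots,d$ can be fixed once and for all, which is what makes the Lagrange step legitimate. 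With that clarified, the induction goes through without issue.
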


\noindent
From this, the following 
 analogue of Remark~6.3.3 in Schneider \cite{Schneider:CB2}
is obtained.

\begin{coro}
\label{polynomiality-coefficient1}
The functional $L_1: \lpn \to\Q$ is Minkowski  additive.
\end{coro}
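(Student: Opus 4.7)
The plan is to exploit the polynomiality result of McMullen (Theorem \ref{lattice-polynomial}) together with the Ehrhart expansion (Theorem \ref{Ehrhart}) to pin down the linear coefficient. For fixed $P, Q \in \lpn$, I would consider the two-variable lattice-point counting function
$$f(k_1, k_2) := L(k_1 P + k_2 Q),$$
which Theorem \ref{lattice-polynomial} identifies as a polynomial in $k_1, k_2 \in \N$. Extending it uniquely to $\R^2$, I would write
$$f(k_1, k_2) = \sum_{i,j \ge 0} c_{ij}\, k_1^i\, k_2^j.$$

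The key observation is that three natural specializations recover Ehrhart polynomials I already understand. Setting $k_2 = 0$ gives $f(k_1, 0) = L(k_1 P) = \sum_i L_i(P)\, k_1^i$, so by uniqueness of coefficients $c_{i,0} = L_i(P)$; in particular $c_{1,0} = L_1(P)$. Symmetrically $c_{0,1} = L_1(Q)$. The diagonal specialization $k_1 = k_2 = k$ gives
$$f(k,k) = L\bigl(k(P+Q)\bigr) = \sum_i L_i(P+Q)\, k^i,$$
while on the other hand $f(k,k) = \sum_{i,j} c_{ij}\, k^{i+j}$. Comparing the coefficients of $k^1$ on both sides, only the pairs $(i,j) = (1,0)$ and $(0,1)$ contribute, and this yields
$$L_1(P+Q) = c_{1,0} + c_{0,1} = L_1(P) + L_1(Q),$$
which is Minkowski additivity.

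There is no real obstacle: the proof is the Ehrhart-analogue of the classical derivation that $V_1$ is Minkowski additive from the polynomiality of mixed volumes. The only step requiring a moment of care is the passage from a polynomial identity on $\N^2$ to coefficient comparison, but this is justified by the standard fact that a polynomial vanishing on $\N^2$ vanishes identically, so the coefficients $c_{ij}$ are uniquely determined by $f$.
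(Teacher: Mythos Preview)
Your proof is correct and follows essentially the same approach as the paper: both use McMullen's polynomiality of $L(k_1P+k_2Q)$, specialize to $k_1=0$ and $k_2=0$ to identify the linear coefficients as $L_1(P)$ and $L_1(Q)$, and then restrict to the diagonal $k_1=k_2=k$ to read off $L_1(P+Q)$. The only difference is presentational---you name the coefficients $c_{ij}$ explicitly, while the paper speaks of ``the linear term''.
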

\begin{proof}
For $P,Q\in \lpn$ and $k,l\in\N$, by Theorem \ref{lattice-polynomial} we see that $L(k\, P+l\, Q)$ is a polynomial in $k$ and $l$. Considering the expression first as a function of $l$ when $k=0$ and second as a function of $k$ when $l=0$, we deduce that the linear term in $L(k\, P+l\, Q)$ is $L_1(P)\,k+L_1(Q)\, l$. In particular, the linear term
in the one variable polynomial $L(k\, P+k\, Q)$ is $L_1(P)\,k+L_1(Q)\,k$ on the one hand and 
by Theorem \ref{Ehrhart} we get $L_1(P+Q)\,k$ on the other hand. Hence $L_1(P+Q)=L_1(P)+L_1(Q)$.
\end{proof}

\goodbreak
In analogy to (\ref{lpe}), for $P\in\lpn$, we define the discrete moment vector  by
$$
\ell(P)=\sum_{x\in P\cap\Z^n}x.
$$
The discrete moment vector $\ell:\lpn \to \Z^n$ is a valuation that is equivariant with respect to unimodular linear transformations.
In addition, if $z\in\Z^n$, then
\begin{equation}
\label{moment-trans}
\ell(P+z)=\ell(P)+L(P)z.
\end{equation}
In particular, $\ell$ is not translation invariant or equivariant. 
In the terminology of \cite{McMullen77}, $\ell$ is an extended $\Z^n$-valuation. 

\goodbreak
As a special case of a more general result by McMullen \cite[Theorem 14]{McMullen77} we obtain the following 
 result.

\begin{theo}[McMullen]
\label{moment-poly}
There exist $\ell_i: \lpn \to \Q^n$ for $i=1,\ldots, n+1$ such that
$$\ell(kP)=\sum_{i=1}^{n+1}\ell_i(P)k^i$$ 
for $k\in\N$ and $P\in \lpn$. For each $i$, the function $\ell_i$ is a valuation which is
equivariant with respect to unimodular linear transformations and
homogeneous of degree $i$. 
\end{theo}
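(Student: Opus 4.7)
The plan is to mimic Ehrhart's theorem (Theorem \ref{Ehrhart}) by reducing the claim to polynomiality of the classical lattice point enumerator $L$. Working coordinate by coordinate, write $\ell=(\ell^1,\dots,\ell^n)$ so that each $\ell^j(P)=\sum_{x\in P\cap\Z^n}x_j$ is a real-valued valuation on $\lpn$. It suffices to show that $\ell^j(kP)$ is a polynomial in $k$ of degree at most $n+1$; once this is known, the vector-valued expansion follows coordinate by coordinate, and all further properties are extracted from the uniqueness of polynomial coefficients.

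To establish the polynomiality, I would introduce an auxiliary lattice polytope in one higher dimension. Using (\ref{moment-trans}), we may first assume $P\subset\{x:x_j\ge 0\}$: if not, translate $P$ by $z\in\Z^n$, and observe that $\ell^j(k(P+z))=\ell^j(kP)+k\,z_j\,L(kP)$ shifts $\ell^j(kP)$ by a polynomial in $k$ (using Ehrhart's theorem for $L$). Under this assumption define
$$
\hat P:=\{(x,y)\in \R^n\times\R:\ x\in P,\ 0\le y\le x_j\}.
$$
Since the defining inequalities have integer coefficients and every vertex has the form $(v,0)$ or $(v,v_j)$ for a lattice vertex $v$ of $P$, the polytope $\hat P$ is a lattice polytope in $\R^{n+1}$ with $\dim(\hat P)\le \dim(P)+1\le n+1$. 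Counting lattice points column by column in the $y$-direction yields
$$
L(k\hat P)=\sum_{x\in kP\cap \Z^n}(x_j+1)=\ell^j(kP)+L(kP),
$$
so Ehrhart's theorem applied to $\hat P$ and to $P$ shows that both $L(k\hat P)$ and $L(kP)$ are polynomials in $k$, and hence so is $\ell^j(kP)=L(k\hat P)-L(kP)$, of degree at most $n+1$.

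Assembling coordinates, there exist uniquely determined $\ell_i:\lpn\to\Q^n$ with $\ell(kP)=\sum_{i\ge 0}\ell_i(P)\,k^i$; the constant term vanishes because $\ell(0\cdot P)=\ell(\{o\})=o$, giving the stated expansion indexed by $i=1,\dots,n+1$. Uniqueness of polynomial coefficients transfers each property of $\ell$ to every $\ell_i$: the valuation property follows by expanding $\ell(kP)+\ell(kQ)=\ell(k(P\cup Q))+\ell(k(P\cap Q))$ in powers of $k$; unimodular equivariance follows from $\sum_i \ell_i(\phi P)k^i=\ell(k\phi P)=\phi\,\ell(kP)=\sum_i \phi\ell_i(P)k^i$; and homogeneity of degree $i$ follows by comparing $\ell(k(mP))=\sum_i \ell_i(mP)k^i$ with $\ell((km)P)=\sum_i \ell_i(P)(km)^i$ for all $k,m\in\N$.

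The only nontrivial step is the construction of $\hat P$: one must check that its vertices have integer coordinates, which is the place where the lattice structure of $P$ enters in an essential way. Everything else is coefficient-bookkeeping that proceeds automatically from the uniqueness of polynomial representations and the already established Theorem \ref{Ehrhart}.
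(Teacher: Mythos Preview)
Your proof is correct. Note, however, that the paper does not itself prove this theorem: it simply cites it as a special case of McMullen's general result \cite[Theorem~14]{McMullen77} on extended valuations. So there is no proof in the paper to compare against; your argument supplies an independent elementary derivation.

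Your approach---lifting to $\R^{n+1}$ via the ``staircase'' polytope $\hat P$ and invoking Ehrhart's theorem there---is a clean trick that avoids McMullen's polytope-algebra machinery entirely. The one step you flag but do not carry out, namely that every vertex of $\hat P$ has the form $(v,0)$ or $(v,v_j)$ with $v$ a vertex of $P$, is easy to complete: any extreme point $(x^*,y^*)$ of $\hat P$ must have $y^*\in\{0,x^*_j\}$ (otherwise perturb $y^*$), and in either case a nontrivial convex decomposition $x^*=\tfrac12(x'+x'')$ inside $P$ lifts to one inside $\hat P$ (using $(x',0),(x'',0)$ or $(x',x'_j),(x'',x''_j)$ respectively, both of which lie in $\hat P$ since $P\subset\{x_j\ge 0\}$). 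Hence $\hat P$ is a lattice polytope and Ehrhart applies. The rationality of the $\ell_i$, which you do not address explicitly, follows because $\ell(kP)\in\Z^n$ for all $k\in\N$, so Lagrange interpolation over $n+2$ integer values yields rational coefficients.
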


\noindent
We call $\ell_1(P)$ the discrete Steiner point of the lattice polytope $P$. 

\goodbreak

A special case of a more general result by
McMullen \cite[Theorem~14]{McMullen77} implies the following result.

\begin{theo}[McMullen]\label{lattice-polyv} 
If $\,P_1,\dots, P_m\in\lpn$, then the discrete moment vector $\ell(k_1 P_1 +\dots +k_m P_m)$ is a polynomial in $k_1,\dots, k_m\in\N$.
\end{theo}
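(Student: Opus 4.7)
The plan is to reduce the vector-valued claim to the scalar version already established in Theorem \ref{lattice-polynomial}. It suffices to prove that for every $v \in \Z^n$ the scalar-valued valuation
\[\ell_v(P) := v \cdot \ell(P) = \sum_{x \in P \cap \Z^n} v \cdot x\]
is polynomial in $k_1, \dots, k_m$ when $P = k_1 P_1 + \dots + k_m P_m$; taking $v = e_1, \dots, e_n$ then yields the theorem component by component.

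The key device is a lift to one dimension higher. Fix $v \in \Z^n$, and choose integers $c_1, \dots, c_m$ large enough that $v \cdot x + c_i \ge 0$ for every $x \in P_i$. For $(k_1, \dots, k_m) \in \N^m$, set $P = k_1 P_1 + \dots + k_m P_m$ and $c = k_1 c_1 + \dots + k_m c_m$, and define
\[Q_v(P) = \{(x, t) \in \R^n \times \R : x \in P,\ 0 \le t \le v \cdot x + c\}.\]
Since $v$ and the $c_i$ are integers and $P$ is a lattice polytope, $Q_v(P)$ is a lattice polytope in $\R^{n+1}$ (its vertices are $(x,0)$ and $(x, v\cdot x+c)$ for vertices $x$ of $P$). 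The central claim is the Minkowski-compatibility
\[Q_v(k_1 P_1 + \dots + k_m P_m) = k_1 Q_v(P_1) + \dots + k_m Q_v(P_m),\]
where each $Q_v(P_i)$ on the right is defined with $c_i$. The inclusion $\supseteq$ is immediate from the definitions. For the reverse, given $(y,t) \in Q_v(P)$, pick any decomposition $y = \sum_i k_i y_i$ with $y_i \in P_i$; since $0 \le t \le v \cdot y + c = \sum_i k_i(v \cdot y_i + c_i)$, one distributes $t$ as $t = \sum_i k_i s_i$ with $0 \le s_i \le v \cdot y_i + c_i$ by scaling the intervals proportionally (or by taking all $s_i = 0$ when the sum is zero).

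The proof then concludes by double-counting lattice points. Applying Theorem \ref{lattice-polynomial} to the lattice polytopes $Q_v(P_1), \dots, Q_v(P_m)$ in $\R^{n+1}$ shows that $L(Q_v(P))$ is a polynomial in $k_1, \dots, k_m$. On the other hand, counting fibrewise over $P \cap \Z^n$ gives
\[L(Q_v(P)) = \sum_{x \in P \cap \Z^n}(v \cdot x + c + 1) = \ell_v(P) + (c+1)\, L(P),\]
and since both $c+1 = 1 + \sum_i k_i c_i$ and $L(P)$ are polynomial in $k_1, \dots, k_m$, so is $\ell_v(P) = L(Q_v(P)) - (c+1)\, L(P)$. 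I expect the main technical step to be the verification of the Minkowski-compatibility of $Q_v$; every other ingredient is either an immediate fibrewise count or an appeal to Theorem \ref{lattice-polynomial}.
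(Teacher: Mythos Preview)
The paper does not give its own proof of this theorem; it simply records it as a special case of McMullen's Theorem~14 in \cite{McMullen77}, which handles extended $\Z^n$-valuations in an abstract setting. So there is no proof in the paper to compare against, only a citation.

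Your argument is correct and provides a self-contained, elementary alternative. The lift $P \mapsto Q_v(P)$ is the subgraph polytope of the affine function $x \mapsto v\cdot x + c$, and the Minkowski compatibility you isolate is genuine: it holds because the fibres are segments $[0,\,v\cdot x + c]$ and Minkowski addition of parallel segments is additive on lengths (your parenthetical about proportional scaling is just the identity $[0,\sum_i k_i a_i] = \sum_i k_i[0,a_i]$ for $a_i \ge 0$). The fibrewise count then writes $\ell_v(P)$ as the difference of two quantities already polynomial by Theorem~\ref{lattice-polynomial}. Compared with invoking McMullen's general framework for extended valuations, your route trades abstraction for one extra ambient dimension and a direct reduction to the scalar case; it is more hands-on and makes Theorem~\ref{lattice-polyv} an immediate corollary of Theorem~\ref{lattice-polynomial} rather than a parallel citation.
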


\noindent
From this, we deduce as in Corollary \ref{polynomiality-coefficient1} the following result.

\begin{coro}\label{dst_additive}
The functional $\dst: \lpn \to\Q^n$ is additive.
\end{coro}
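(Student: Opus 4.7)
The plan is to mimic the proof of Corollary \ref{polynomiality-coefficient1} verbatim, working componentwise with $\Q^n$-valued polynomials instead of $\Q$-valued ones. Concretely, I would fix $P, Q \in \lpn$ and apply Theorem \ref{lattice-polyv} to conclude that the map $(k,l) \mapsto \ell(k\, P + l\, Q)$ is a polynomial in $k, l \in \N$ with coefficients in $\Q^n$.

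Next I would extract the $\Q^n$-valued coefficient of $k^1 l^0$ and of $k^0 l^1$ in this two-variable polynomial. Setting $l=0$ and comparing with Theorem \ref{moment-poly} shows that the coefficient of $k$ in $\ell(k\, P)$ is $\ell_1(P) = \dst(P)$, and symmetrically for $Q$. Hence the bilinear expansion of $\ell(k\, P + l\, Q)$ contributes $\dst(P)\, k + \dst(Q)\, l$ to its linear part in $(k,l)$, because the pure $k$ and pure $l$ terms are determined by setting the other variable to zero.

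Then I would specialize to the diagonal $k = l$. On one hand, the linear (in $k$) term of the one-variable polynomial $\ell(k\, P + k\, Q) = \ell(k(P+Q))$ equals $\dst(P+Q)\, k$ by Theorem \ref{moment-poly}. On the other hand, substituting $l = k$ in the two-variable polynomial collects $\bigl(\dst(P) + \dst(Q)\bigr) k$ as its linear term. Equating these two linear terms gives $\dst(P+Q) = \dst(P) + \dst(Q)$, which is exactly Minkowski additivity of $\dst$.

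There is no real obstacle here: the only thing to be slightly careful about is that Theorems \ref{moment-poly} and \ref{lattice-polyv} are vector-valued, so the polynomial identification should be done coordinate by coordinate (or, equivalently, with coefficients in $\Q^n$), but this is automatic since a vector-valued polynomial is just a tuple of scalar polynomials. Everything else is a direct transcription of the argument in Corollary \ref{polynomiality-coefficient1}.
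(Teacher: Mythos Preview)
Your proposal is correct and is exactly the approach the paper takes: it simply states that the result follows ``as in Corollary~\ref{polynomiality-coefficient1}'' from Theorem~\ref{lattice-polyv}, which is precisely the argument you wrote out.
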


In the next proposition, we collect some properties of the functional $\dst$. We require the following lemma.

\begin{lemma}\label{cTm:lemma}
If $\,\oz:\lpn\to \R^n$ is an $\,\slnz$ equivariant valuation, then 
\begin{equation*}
\label{centrans}
\oz\big((m+1)T_m-\cen((m+1)T_m)\big)=o.
\end{equation*}
\end{lemma}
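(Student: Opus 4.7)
Fix $m$ and set $P=(m+1)T_m-c$ with $c=e_1+\cdots+e_m$, so that $P$ is a lattice polytope with centroid at the origin and vertices $v_0=-c$ and $v_i=(m+1)e_i-c$ for $1\le i\le m$. By $\slnz$-equivariance, $\oz(P)$ is fixed by every $\phi\in\slnz$ satisfying $\phi P=P$, and my plan is to exhibit enough such $\phi$ to force $\oz(P)=o$.

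For each $\sigma\in S_{m+1}$ I would define a linear map $\phi_\sigma$ on $\R^n$ by $\phi_\sigma(e_i)=(v_{\sigma(i)}-v_{\sigma(0)})/(m+1)$ for $1\le i\le m$ and $\phi_\sigma(e_j)=e_j$ for $j>m$. Since $v_p-v_q$ equals $(m+1)e_p$, $-(m+1)e_q$, or $(m+1)(e_p-e_q)$ according as $q=0$, $p=0$, or $0\notin\{p,q\}$, the map $\phi_\sigma$ lies in $\operatorname{GL}_n(\Z)$, and a short calculation shows $\det\phi_\sigma=\operatorname{sign}(\sigma)$; in particular $\phi_\sigma\in\slnz$ exactly when $\sigma$ is even, and $\phi_\sigma$ always permutes the vertices of $P$ by $v_i\mapsto v_{\sigma(i)}$.

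Assume first that $m\ge 2$. Since $A_{m+1}$ is transitive on $\{0,\ldots,m\}$, the only vector of $\aff P=\Span(e_1,\ldots,e_m)$ fixed by every $\phi_\sigma$ with $\sigma\in A_{m+1}$ is the centroid $o$; writing $\oz(P)=\sum_i a_i e_i$ this forces $a_1=\cdots=a_m=0$, which already settles the case $m=n$. If $m<n$, for each $k\in\{m+1,\ldots,n\}$ I would consider the composite $\phi_{(0\,1)}\tau_k$, where $\tau_k$ negates $e_k$ and fixes every other $e_i$. Both factors have determinant $-1$ and act on complementary coordinate blocks, so the product lies in $\slnz$; it preserves $P$ since $\phi_{(0\,1)}$ permutes vertices of $P$ while $\tau_k$ is the identity on $\aff P$. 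Applying this product to $\oz(P)$ and matching coordinates yields $a_k=0$, so running $k$ through $\{m+1,\ldots,n\}$ gives $\oz(P)=o$.

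The case $m=1$ would be handled by the very same composites: for each $k\in\{2,\ldots,n\}$, the map $\phi_{(0\,1)}\tau_k$ (which now simply negates $e_1$ and $e_k$ while fixing the other basis vectors) lies in $\slnz$, preserves $P=[-e_1,e_1]$, and forces $a_1=a_k=0$. The case $m=0$ reduces to the standard fact that for $n\ge 2$ the only vector of $\R^n$ fixed by all of $\slnz$ is $o$, since applying the unipotent matrices $e_i\mapsto e_i+e_j$ to $\oz(\{o\})$ kills each coordinate in turn. The main obstacle will be the observation that only even permutations of the vertices of $P$ are realized inside $\slnz$: in order to couple the symmetry argument inside $\aff P$ with the kill-off of transverse components, one must pair an odd vertex permutation with a coordinate reflection in a direction orthogonal to $\aff P$; once that idea is in place the remaining verifications are routine.
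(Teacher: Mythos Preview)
Your argument is correct and rests on the same idea as the paper's: the centred simplex $P$ is invariant under every $\phi\in\slnz$ that permutes its vertices by an even permutation, and the origin is the only common fixed point of those maps. The paper packages this by conjugating the orientation-preserving isometry group $\operatorname{Sym}(T_*)$ of the regular simplex through the linear map $\alpha$, whereas you write down the maps $\phi_\sigma$ explicitly; for $m<n$ the paper simply says ``the lower-dimensional case follows by considering the statement in an appropriate subspace'', while you spell out how to kill the transverse coordinates via the composites $\phi_{(0\,1)}\tau_k$. That device works, but a slightly quicker route for the transverse part is the shear $e_k\mapsto e_k+e_1$ (with $k>m$): it already lies in $\slnz$, fixes $P$ pointwise on $\aff P$, and comparing the $e_1$-coordinate of $\oz(P)$ before and after forces $a_k=0$ directly, without having to pair with an odd vertex-permutation.
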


\begin{proof} First, let $m=n$. 
Note that $\cen((n+1)T_n)\in\Z^n$.
Set 
$$S_n=(n+1)T_n-\cen((n+1)T_n).$$ 
Since $\oz$ is $\slnz$ equivariant, we obtain  from
$\alpha^{-1} \rho \alpha S_n= S_n$ (with $\alpha$ defined in (\ref{alpha})) that
$$\alpha \oz(S_n)= \rho  \alpha \oz(S_n)$$
for all $\rho\in\sym$. Thus the statement holds for $m=n$. The lower dimensional case follows by considering the statement in an appropriate subspace.
\end{proof}

\begin{prop}
\label{disc-steiner}
The functional $\dst:\lpn \to \Q^n$ is an $\,\slnz$ and translation equivariant valuation.
If $P\in\lpn$ is a basic simplex or centrally symmetric, then $\dst(P)=\cen(P)$.
\end{prop}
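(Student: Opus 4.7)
The plan is to establish the four assertions in order: (i) that $\dst$ is an $\slnz$ equivariant valuation, (ii) translation equivariance, (iii) the basic simplex case, and (iv) the centrally symmetric case, with each step building on the previous ones.

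Step (i) will be immediate from Theorem~\ref{moment-poly}, which asserts that each $\ell_i$, and in particular $\ell_1=\dst$, is a valuation equivariant under unimodular linear transformations; this subsumes $\slnz$ equivariance. For step (ii), I will apply (\ref{moment-trans}) to the dilate $kP$ translated by $kz$: for $z\in\Z^n$ and $k\in\N$,
$$\ell(kP+kz) \;=\; \ell(kP)+k\,L(kP)\,z.$$
Expanding both sides as polynomials in $k$ via Theorems~\ref{Ehrhart} and~\ref{moment-poly} and comparing the coefficients of $k^1$ will yield $\dst(P+z)=\dst(P)+L_0(P)\,z=\dst(P)+z$, since $L_0\equiv 1$ (Euler characteristic).

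For step (iii), Lemma~\ref{cTm:lemma}---applicable thanks to (i) and (ii)---will give, for each $m=0,\dots,n$,
$$\dst\bigl((m+1)T_m-\cen((m+1)T_m)\bigr)\;=\;o.$$
Since $\cen((m+1)T_m)=e_1+\dots+e_m\in\Z^n$, translation equivariance rewrites this as $\dst((m+1)T_m)=\cen((m+1)T_m)$. Applying degree-$1$ homogeneity of $\dst$ on the left and scaling-equivariance of $\cen$ on the right, division by $m+1$ will give $\dst(T_m)=\cen(T_m)$. A general basic simplex $\phi T_m+z$ with $\phi\in\slnz$ and $z\in\Z^n$ will then follow by $\slnz$ and translation equivariance: $\dst(\phi T_m+z)=\phi\,\cen(T_m)+z=\cen(\phi T_m+z)$.

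Step (iv) is the main subtle point I anticipate, since for odd $n$ the map $-I$ lies outside $\slnz$ and so cannot be invoked via $\slnz$-equivariance. To circumvent this I will first prove $\dst(-P)=-\dst(P)$ directly from the definition: the involution $x\mapsto -x$ is a bijection of $\Z^n$, so the summation defining $\ell$ gives $\ell(-P)=-\ell(P)$; applied to $kP$ and matching the $k^1$ coefficients of the two polynomial expansions from Theorem~\ref{moment-poly}, this yields $\dst(-P)=-\dst(P)$. Now if $P$ is centrally symmetric with center $c$, then $-P=P-2c$ with $2c\in\Z^n$, since $2c$ is the sum of any pair of opposite lattice vertices of $P$. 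Translation equivariance then gives $\dst(-P)=\dst(P)-2c$, and combining this with $\dst(-P)=-\dst(P)$ will yield $2\dst(P)=2c$, hence $\dst(P)=c=\cen(P)$. The key conceptual move is replacing group-equivariance under $-I$, which fails in odd dimension, by the direct lattice-point identity $\ell(-P)=-\ell(P)$.
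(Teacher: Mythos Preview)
Your proof is correct and follows essentially the same route as the paper's: steps (i)--(iii) match the paper almost verbatim, and in step (iv) both arguments rest on the identity $-P=P-2c$ with $2c\in\Z^n$, combined with equivariance under $-I$ and translation equivariance. One minor point: your worry that $-I\notin\slnz$ for odd $n$ is unnecessary and slightly inconsistent with your own step (i), since Theorem~\ref{moment-poly} gives equivariance under \emph{all} unimodular linear transformations, including $-I$; the paper simply invokes the unimodular affine map $z\mapsto -z+x_0+x_1$ directly, whereas your detour through $\ell(-P)=-\ell(P)$ re-proves that same equivariance from the definition.
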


\begin{proof}
That $\dst:\lpn\to \Q^n$ is an $\slnz$ equivariant valuation is part of Theorem~\ref{moment-poly}. That $\dst$ is translation equivariant follows from Theorem \ref{moment-poly} and (\ref{moment-trans}). 

If $T$ is an $m$-dimensional basic simplex, then it follows from
Lemma \ref{cTm:lemma} and the trans\-lation equivariance that $\dst((m+1)T)=\cen((m+1)T)$. As both $\dst$ and the centroid are homogeneous of degree one, we conclude $\dst(T)=\cen(T)$.

If $P\in \lpn$ is centrally symmetric, then $\cen(P)$ is the center of symmetry of $P$. 
If $x_0$ is a vertex of $P$, then its image $x_1$  by the reflection through $\cen(P)$ is also a vertex. Thus $x_0,x_1\in\Z^n$ and $\cen(P)=\frac12(x_0+x_1)$. 
The unimodular map $\phi$ defined by $z\mapsto -z+x_0+x_1$ is the reflection through $\cen(P)$ and
its only fixed point is $\cen(P)$. Since both $\cen(P)$ and $\dst(P)$ are fixed points of $\phi$, we conclude that $\dst(P)=\cen(P)$.
\end{proof}

\subsection{Proof of Theorem \ref{dst}}

That $\dst:\lpn\to \R^n$ is an $\slnz$ and translation equivariant  valuation is part of Proposition~\ref{disc-steiner}. So the following proposition concludes the proof of the theorem. Let $n\ge 2$.

\begin{prop}
If $\,\oz:\lpn\to \R^n$ is an $\slnz$ and translation equi\-variant valuation, then $\oz=\dst$.
\end{prop}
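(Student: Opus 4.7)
The plan is to consider $\tilde{\oz} := \oz - \dst$. By Proposition~\ref{disc-steiner}, $\dst$ is an $\slnz$ and translation equivariant valuation, so $\tilde{\oz}\colon\lpn\to\R^n$ is an $\slnz$ equivariant and translation \emph{invariant} valuation; it then suffices to show $\tilde{\oz}\equiv o$. Applying Proposition~\ref{complementation} to $\tilde{\oz}$ (valid since $\R^n$ is an abelian group), the value of $\tilde{\oz}$ on any $P\in\lpn$ is an integer linear combination of values $\tilde{\oz}(S)$ on basic simplices, and by $\slnz$ equivariance together with translation invariance $\tilde{\oz}(\phi T_j+z)=\phi\,\tilde{\oz}(T_j)$ for $\phi\in\slnz$, $z\in\Z^n$. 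Hence it suffices to prove $\tilde{\oz}(T_j)=o$ for every $j\in\{0,1,\ldots,n\}$. For each $j$ this will follow by constraining $\tilde{\oz}(T_j)$ through the equation $\phi\,\tilde{\oz}(T_j)=\tilde{\oz}(T_j)$, which holds for the linear part $\phi$ of every affine map $\tau(x)=\phi x+z$ with $\phi\in\slnz$, $z\in\Z^n$, and $\tau T_j=T_j$.

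For $j=0$, the entire group $\slnz$ stabilizes $T_0=\{o\}$, and elementary shears of the form $I+e_ie_k^\top$ show that the common fixed vector in $\R^n$ is the origin whenever $n\ge 2$. For $1\le j\le n-1$, I first use the pointwise stabilizer $\phi=\bigl(\begin{smallmatrix} I_j & A\\ 0 & M\end{smallmatrix}\bigr)$ with $A\in\Z^{j\times(n-j)}$ arbitrary and $M\in\operatorname{SL}_{n-j}(\Z)$ arbitrary; letting $A$ vary forces the last $n-j$ coordinates of $\tilde{\oz}(T_j)$ to vanish. To kill the first $j$ coordinates, when $j\ge 2$ I invoke the affine three-cycle $\tau\colon o\to e_a\to e_b\to o$ (with $\phi e_c=e_c-e_a$ for $c\in\{1,\ldots,j\}\setminus\{a,b\}$ and $\phi e_c=e_c$ for $c>j$); a direct computation gives $\det\phi=1$, and $\phi v=v$ reads $v_a=v_b$ together with $-\sum_{k=1}^j v_k=v_a$. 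Ranging over all pairs $1\le a<b\le j$ forces $v_1=\cdots=v_j=0$. For $j=1$ a three-cycle is unavailable; I substitute the vertex swap $o\leftrightarrow e_1$ given by $\phi=\operatorname{diag}(-1,-1,1,\ldots,1)$ and $z=e_1$, which immediately yields $v_1=-v_1=0$.

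The case $j=n$ is handled purely by the same affine three-cycles $o\to e_a\to e_b\to o$ for all $1\le a<b\le n$, whose linear parts lie in $\slnz$ because even vertex permutations of $T_n$ correspond to linear parts of determinant $+1$; the constraints $v_a=v_b$ for all pairs, combined with the single equation $-\sum_{k=1}^n v_k=v_a$, force $v_1=\cdots=v_n=0$. The main obstacle lies in this last case, where no transverse stabilizer is available to shrink the ambient fixed space beforehand, so one must verify directly both that the chosen affine three-cycles belong to $\slnz+\Z^n$ and that their linear parts admit no common nonzero fixed vector in $\R^n$; equivalently, one could invoke that, after conjugating by translation by $\cen(T_n)$, the resulting $A_{n+1}$-action on $\R^n$ is the standard representation on the hyperplane $\{\sum x_i=0\}\subset\R^{n+1}$, which is irreducible and has no trivial summand for $n\ge 2$.
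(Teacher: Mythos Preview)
Your proof is correct and takes a genuinely different route from the paper's argument. Both proofs begin identically by setting $\tilde{\oz}=\oz-\dst$ and reducing, via the Betke--Kneser machinery, to showing $\tilde{\oz}(T_j)=o$ for $j=0,\ldots,n$. The paper then invokes McMullen's polynomial expansion (Theorem~\ref{trans-poly}) to decompose $\tilde{\oz}$ into homogeneous components $\ow_i$, applies Lemma~\ref{cTm:lemma} to the lattice polytope $(m+1)T_m-\cen((m+1)T_m)$ (whose centroid is at the origin, so the $\sym$-symmetry argument applies directly), and finally uses homogeneity of $\ow_i$ to descale to $T_m$. You bypass this detour entirely: instead of translating $T_j$ to center it, you work directly with the \emph{affine} stabilizer of $T_j$ inside $\slnz\ltimes\Z^n$, writing down explicit shear blocks and affine three-cycles whose linear parts have determinant $+1$ and whose common fixed vector is forced to be the origin. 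Your approach is more elementary in that it avoids the nontrivial Theorem~\ref{trans-poly} altogether, at the cost of a short case analysis and a determinant check; the paper's approach is conceptually tidier (one symmetry lemma covers all $m$) but leans on heavier machinery. Both are fully valid.
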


\begin{proof} 
Define $\ow: \lpn \to \R^n$ by $\ow(P)=\oz (P)-\dst(P)$. Note that $\ow$ is an $\slnz$  equivariant and translation invariant valuation. Applying Theorem~\ref{trans-poly} to $\ow$ shows that
if $P\in \lpn$ and $k\in\N$, then
$$
\ow (kP)=\sum_{i=0}^n \ow_i(P)k^i
$$
where for each $i$, the function $\ow_i:\lpn \to \R^n$ is an $\slnz$  equivariant and translation invariant valuation which is homogeneous of degree $i$.

Lemma \ref{cTm:lemma} applied with $\ow=\ow_i$, the $\slnz$ equivariance and translation invariance of $\ow_i$ imply that 
$$\ow_i((m+1) T_m)=\ow_i((m+1)T_m-\cen((m+1)T_m))=o$$ 
for $i=0,\dots,n$. Since $\ow_i$ 
is homogeneous of degree $i$, we obtain $\ow_i(T_m)=o$ for $i=0,\dots,n$. Thus Corollary~\ref{BetkeKneserinvariance} implies that  $\ow(P)=o$
for $P\in \lpn$. In particular, $\oz (P)=\dst(P)$ for any $P\in \lpn$.
\end{proof}

\subsection{Proof of Theorem \ref{dst_add}}

Since any additive function on $\lpn$ is a valuation, Corollary \ref{dst_additive} implies the statement of the theorem.

\section{Contravariant valuations}

In this section, we first prove Theorem \ref{contra}, that is,  we prove that for every $\sltz$ contra\-variant and translation invariant Minkowski valuation $\oZ$ on $\lpt$, 
there are $a,b\ge 0$ such that $\oZ P=a\,\rho_{\pi/2}(P-\dst(P))+b\,\rho_{\pi/2}({-P}+\dst(P))$ for every $P\in\lpt$ and 
we prove for $n\ge3$ that for every $\slnz$ contra\-variant and translation invariant Minkowski valuation $\oZ$ on $\lpn$, 
there is $c\ge 0$ such that $\oZ=c\op$. Second, we prove Theorem \ref{contrai}.

Note that a simple consequence of the symmetry properties of $T_n$ and the $\slnz$ contravariance of $\oZ$ is the following result.

\begin{lemma}
\label{dimT}
Let $\oZ: \lpn \to \ckn$ be an $\,\slnz$ contravariant  and translation invariant Minkowski valuation.
If
$\,\oZ T_n \neq\{o\}$, then $o\in\Int (\oZ T_n)$.
\end{lemma}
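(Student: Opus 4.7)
The plan is to show that $\oZ T_n$ carries enough linear symmetries to force the following dichotomy: by irreducibility of the symmetry group, any convex body fixed setwise must either equal $\{o\}$ or be full-dimensional with centroid at $o$.

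I would first exhibit the symmetries. Writing $e_0:=o$, each permutation $\sigma$ of $\{0,1,\ldots,n\}$ determines the unique affine map $\Psi_\sigma$ sending $e_i$ to $e_{\sigma(i)}$; its translation part is $e_{\sigma(0)}\in\Z^n$ and its linear part $\phi_\sigma$ (with columns $e_{\sigma(i)}-e_{\sigma(0)}$) is an integer matrix. Since $\Psi_\sigma$ permutes the vertices of $T_n$, it stabilizes $T_n$. The assignment $\sigma\mapsto\phi_\sigma$ is a group homomorphism, and a direct computation on a single transposition, together with the fact that transpositions generate $S_{n+1}$, gives $\det\phi_\sigma=\mathrm{sign}(\sigma)$. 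Thus $\phi_\sigma\in\slnz$ precisely when $\sigma$ lies in the alternating group $A_{n+1}$, and for such $\sigma$ the translation invariance and $\slnz$ contravariance of $\oZ$ yield
$$\oZ T_n=\oZ(\phi_\sigma T_n+e_{\sigma(0)})=\oZ(\phi_\sigma T_n)=\phi_\sigma^{-t}\,\oZ T_n.$$
Hence $\oZ T_n$ is setwise invariant under the group $H=\{\phi_\sigma^{-t}:\sigma\in A_{n+1}\}$.

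To finish, I would verify that $H$ acts irreducibly on $\R^n$ and then apply the dichotomy. Conjugating by the map $\alpha$ of Section~\ref{secn-2} sends $\{\phi_\sigma:\sigma\in A_{n+1}\}$ onto the rotation group $\sym$ of the regular simplex $T_*$, which realizes the standard $n$-dimensional irreducible representation of $A_{n+1}$; being orthogonal, this representation is self-dual, so its transpose-inverse realization $H$ is also irreducible on $\R^n$. Now the centroid of $\oZ T_n$ is an $H$-fixed vector, and by irreducibility the only $H$-fixed vector is $o$; hence the centroid of $\oZ T_n$ equals $o$. The linear span of $\oZ T_n-o$ is an $H$-invariant subspace of $\R^n$, so it equals either $\{o\}$ or $\R^n$: in the former case $\oZ T_n=\{o\}$, while in the latter $\oZ T_n$ is $n$-dimensional with centroid $o$, giving $o\in\Int(\oZ T_n)$. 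The only delicate technical point is the sign computation placing $A_{n+1}$ inside $\slnz$; the rest is soft representation theory.
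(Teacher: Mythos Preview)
Your argument is correct and is precisely the symmetry argument the paper has in mind: the paper omits a proof, merely noting that the lemma is ``a simple consequence of the symmetry properties of $T_n$ and the $\slnz$ contravariance of $\oZ$,'' and your use of the $A_{n+1}$-action via $\phi_\sigma^{-t}$ together with irreducibility (compare the paper's own use of $\alpha^{-1}\rho\alpha\in\slnz$ for $\rho\in\sym$ in Lemma~\ref{cTm:lemma}) fills this in cleanly.
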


\subsection{Lower dimensional polytopes}
\label{secTn-symmetries}

The next lemma was proved in \cite{Haberl_sln, Ludwig:projection} for $\sln$ equivariant (and homo\-geneous) valuations  on $\cpn$.

\goodbreak
\begin{lemma}
\label{lowdim-contra}
Let $\oZ:\lpn\to\ckn$ be an $\,\slnz$ contravariant  and translation invariant Minkowski valuation and let $P\in\lpn$.
\begin{description}
\item{(i)} If $\,\dim (P)\leq n-2$, then $\oZ P=\{o\}$.
\item{(ii)} There exists  $c\geq 0$ (depending on $\oZ$) such that 
if $\dim (P)= n-1$ and $w$ is a unit normal to $\aff P$, then $\oZ P=c\,|P|\,[-w,w]$.
\end{description}
\end{lemma}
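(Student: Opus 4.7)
The plan is, for both parts, to place $P$ in standard position via translation invariance and $\slnz$, reducing to the case $P\subset\Span(e_1,\dots,e_k)$ with $k=\dim P$. This reduction is possible because, after translating a vertex to $o$, the sublattice $\aff P\cap\Z^n$ is saturated and hence extends to a $\Z$-basis of $\Z^n$ that, after adjusting a sign, realizes the desired change of coordinates as an element of $\slnz$. Once $P$ is in standard position, I exploit the pointwise stabilizer of $\Span(e_1,\dots,e_k)$ inside $\slnz$ via $\slnz$ contravariance.

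For part (i), where $k\le n-2$, I take $\phi=\begin{pmatrix}I_k & B\\0 & C\end{pmatrix}$ with $B\in M_{k,n-k}(\Z)$ and $C\in\operatorname{SL}_{n-k}(\Z)$. Each such $\phi$ fixes $P$ pointwise, so $\oZ P=\phi^{-t}\oZ P$. First, with $B=0$ and $C$ ranging over $\operatorname{SL}_{n-k}(\Z)$ (which has elements of infinite order since $n-k\ge 2$), each fibre of $\oZ P$ over $\Span(e_1,\dots,e_k)$ must be bounded and $\operatorname{SL}_{n-k}(\Z)$-invariant, hence equal to $\{o\}$; so $\oZ P\subset\Span(e_1,\dots,e_k)$. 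Second, with $C=I$ and arbitrary integer $B$, the map $\phi^{-t}$ becomes a shear sending $x\in\Span(e_1,\dots,e_k)$ to $x+y$ with $y\in\Span(e_{k+1},\dots,e_n)$ depending linearly on $B$ and $x$; the requirement $\phi^{-t}x\in\Span(e_1,\dots,e_k)$ for every integer $B$ forces $x=o$, hence $\oZ P=\{o\}$.

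For part (ii), where $k=n-1$, the analogous shear argument with $\phi=\begin{pmatrix}I_{n-1} & b\\0 & 1\end{pmatrix}$, $b\in\Z^{n-1}$, gives $\oZ P\subset\Span(e_n)$, so $\oZ P=[-g(P)e_n,\,f(P)e_n]$. The elements $\phi=\begin{pmatrix}A & 0\\0 & 1\end{pmatrix}$ with $A\in\slmz$ have $\phi^{-t}$ fixing $e_n$, and so $f,g\colon\lpm\to\R$ are $\slmz$ and translation invariant valuations; Theorem \ref{uli} in dimension $n-1$ thus yields $f=\sum_{i=0}^{n-1}c_iL_i$ and $g=\sum_{i=0}^{n-1}d_iL_i$. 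Applying part (i) to each basic simplex $T_i\subset\Span(e_1,\dots,e_{n-1})$ with $i\le n-2$ gives $\oZ T_i=\{o\}$, so $f(kT_i)\equiv 0$ and $g(kT_i)\equiv 0$ as polynomials in $k$; matching coefficients kills $c_0,\dots,c_{n-2}$ and $d_0,\dots,d_{n-2}$, leaving $f(P)=c_{n-1}\,|P|$ and $g(P)=d_{n-1}\,|P|$.

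It remains to show $c_{n-1}=d_{n-1}$, which amounts to proving that $\oZ T_{n-1}$ is centrally symmetric. I will exhibit an affine unimodular involution fixing $T_{n-1}$ setwise whose linear part reverses the normal direction. The vertex swap $o\leftrightarrow e_1$ of $T_{n-1}=[o,e_1,\dots,e_{n-1}]$ extends to $\phi=\tau_{e_1}\circ\psi$, where the linear part $\psi$ is defined by $\psi e_1=-e_1$, $\psi e_i=e_i-e_1$ for $2\le i\le n-1$, and $\psi e_n=-e_n$; the last choice gives $\det\psi=1$, so $\psi\in\slnz$. A direct block computation shows $\psi^{-t}e_n=-e_n$, hence translation invariance together with contravariance gives $\oZ T_{n-1}=\oZ(\phi T_{n-1})=\psi^{-t}\oZ T_{n-1}=-\oZ T_{n-1}$, so $\oZ T_{n-1}$ is centrally symmetric and $c_{n-1}=d_{n-1}$. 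Finally, transporting $\oZ P=c\,|P|\,[-w,w]$ back to an arbitrary $(n-1)$-dimensional $P$ via the standard-position reduction is immediate because the $(n-1)$-volume rescaling $|\phi P|/|P|=\|\phi^{-t}w\|$ and the image $\phi^t e_n$ of the standard normal scale reciprocally, making the factors cancel. The main obstacle is this central-symmetry step; the rest is a stabilizer-plus-Betke \& Kneser exercise, with the constraint $n-k\ge 2$ in part (i) exactly guaranteeing the infinite-order elements needed to collapse $\oZ P$ to $\{o\}$.
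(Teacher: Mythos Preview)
Your proof is correct and follows essentially the same approach as the paper: reduce to standard position, use the stabilizer of $P$ in $\slnz$ together with contravariance to force $\oZ P$ into a coordinate line, apply Betke--Kneser to identify the endpoint functionals as multiples of $(n-1)$-volume, and exhibit an explicit affine unimodular map fixing $T_{n-1}$ whose linear part sends $e_n\mapsto -e_n$ to obtain central symmetry. The only organizational differences are that the paper gets (i) more directly by showing $\oZ P\subset\R e_j$ for each $j>d$ with one family of shears (so the intersection is $\{o\}$ when $d\le n-2$), and splits the central-symmetry map into cases $n\ge 3$ and $n=2$, whereas your $\psi$ handles all $n\ge 2$ at once.
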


\goodbreak
\begin{proof} By translation invariance and $\slnz$ contravariance, we may assume that $\Span P=\Span\{e_1,\dots,e_d\}$, where $d=\dim (P)\leq n-1$.

First we claim that
\begin{equation}
\label{inRej}
\oZ P\subset \Span\{e_j\} \mbox{ \ for $j=d+1,\dots,n$}.
\end{equation}
To simplify the notation, let $j=n$ in (\ref{inRej}).

For $j\in\Z$ and $k\in\{1,\ldots,n-1\}$, we define
$\phi_{jk}\in \slnz$ by $\phi_{jk}e_i=e_i$ if $i\neq n$, and $\phi_{jk}e_n=je_k+e_n$. It follows that $\phi_{jk}P=P$. If we have $x=\sum_{i=1}^nt_ie_i\in \oZ P$, then
\begin{equation}
\label{sqAt}
\phi_{jk}^{-t}x=(t_n-j\, t_k)e_n+\sum_{i=1}^{n-1}t_ie_i.
\end{equation}
Since $\phi^{-t}_{jk}\oZ P=\oZ P$, the vector $\phi_{jk}^{-t}x$ is contained in a bounded set. Since $k\in\{1,\dots,n-1\}$ and $j\in\Z$ are arbitrary in (\ref{sqAt}), we conclude that $t_1=\dots=t_{n-1}=0$. Thus (\ref{inRej}) and therefore also (i) are proved.

To prove (ii), we identify $\Span\{e_1,\dots,e_{n-1}\}$ with $\R^{n-1}$. By (\ref{inRej}),
 there exist real $\oz_1(P)\leq \oz_2(P)$
such that $\oZ P=[\oz_1(P),\oz_2(P)]e_n$ for a lattice polytope $P\in\lpm$. In particular, $\oz_1$ and $\oz_2$ are $\slmz$ and translation invariant valuations. Let $a_i=\oz_i(T_{n-1})/|T_{n-1}|$.
Since $\oz_1(S)=\oz_2(S)=0$ by (i) if $S$ is a basic simplex of dimension at most  $(n-2)$, the $(n-1)$-dimensional version of Corollary~\ref{BetkeKneserinvariance} implies that 
$\oz_i(P)=a_i\,|P|$  for $P\in\lpm$. 

To relate $a_1$ and $a_2$ for $n\geq 3$,  we consider
$\phi\in \slnz$ defined by $\phi e_1=e_2$, $\phi e_2=e_1$, $\phi e_n=-e_n$ and  $\phi e_i=e_i$ if $2<i<n$. Then $\phi T_{n-1}=T_{n-1}$ and
$\phi^{-t}=\phi$. Hence $c=a_2=-a_1\ge0$.  If $n=2$, then the $\sltz$ and translation invariance imply
for $\psi\in \sltz$ defined by $\psi e_1=-e_1$ and  $\psi e_2=-e_2$ that
$$
\oZ T_1=\oZ(T_1-e_1)=\oZ(\psi T_1)=\psi \oZ T_1.
$$
Thus again $c=a_2=-a_1\ge0$.
\end{proof}

\goodbreak
Combining Lemma~\ref{lowdim-contra} and the inclusion-exclusion property  leads to the following result.

\begin{coro}
\label{Zincl-excl-contra}
Let $\oZ: \lpn \to \ckn$ be an $\slnz$ contravariant  and translation invariant Minkowski valuation. If $P_1,\dots,P_k\in\lpn$
form a cell decomposition of an $n$-dimensional lattice polytope, then
$$
\oZ(P_1\cup \dots\cup P_k)\,\,\,+\!\!\!
\sum_{\dim (P_i\cap P_j)=n-1}\oZ(P_i\cap  P_j)=\sum_{i=1}^k \oZ P_i.
$$
\end{coro}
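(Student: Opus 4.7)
\smallskip

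The plan is to reduce the Minkowski-sum identity to a scalar identity, which comes directly from the inclusion-exclusion principle of Corollary \ref{incl-ex} once all low-dimensional contributions are killed off by Lemma \ref{lowdim-contra}.

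For each $v\in\R^n$, the map $P\mapsto h(\oZ P,v)$ is an $\R$-valued valuation on $\lpn$, because $\oZ$ is a Minkowski valuation and $h(\,\cdot\,,v)$ is additive under Minkowski addition. Setting $Q=P_1\cup\dots\cup P_k$ and letting $\cF$ be the set of faces of the given cell decomposition of $Q$, Corollary \ref{incl-ex} yields
\[
h(\oZ Q,v)=\sum_{\genfrac{}{}{0pt}{}{F\in\cF}{F\cap\Int Q\neq\emptyset}}(-1)^{n-\dim(F)}\,h(\oZ F,v).
\]

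Next, I would split the right-hand sum by dimension. For $\dim(F)\le n-2$, Lemma \ref{lowdim-contra}(i) gives $\oZ F=\{o\}$, hence $h(\oZ F,v)=0$, so all such terms vanish. The $n$-dimensional faces meeting $\Int Q$ are precisely $P_1,\dots,P_k$ and contribute with sign $(-1)^0=+1$. The $(n-1)$-dimensional faces meeting $\Int Q$ are exactly the interior facets of the cell decomposition, that is, the intersections $P_i\cap P_j$ with $\dim(P_i\cap P_j)=n-1$ (boundary facets lie in $\bd Q$ and fail the interior condition); they contribute with sign $(-1)^1=-1$. Therefore
\[
h(\oZ Q,v)+\sum_{\dim(P_i\cap P_j)=n-1}h(\oZ(P_i\cap P_j),v)=\sum_{i=1}^k h(\oZ P_i,v).
\]

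Finally, since $h(K+L,v)=h(K,v)+h(L,v)$ for all $v\in\R^n$, both sides above are the support functions (evaluated at $v$) of the two Minkowski sums appearing in the corollary. A convex body is determined by its support function, so the identity of support functions for every $v\in\R^n$ gives the claimed equality of convex bodies. The only subtle bookkeeping point — and the nearest thing to an obstacle — is checking that an $(n-1)$-face of $\cF$ meets $\Int Q$ if and only if it arises as an $(n-1)$-dimensional intersection $P_i\cap P_j$; this follows from the definition of a cell decomposition, since such an interior facet is shared by exactly two top-dimensional cells and its relative interior lies in $\Int Q$.
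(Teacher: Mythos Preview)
Your argument is correct and is precisely the approach the paper intends: it states that the corollary follows by ``combining Lemma~\ref{lowdim-contra} and the inclusion-exclusion property,'' and you have spelled this out by passing to the real-valued valuations $P\mapsto h(\oZ P,v)$, applying Corollary~\ref{incl-ex}, and using Lemma~\ref{lowdim-contra}(i) to kill the faces of dimension $\le n-2$. The bookkeeping identification of interior $(n-1)$-faces with the $(n-1)$-dimensional intersections $P_i\cap P_j$ is also handled correctly.
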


\subsection{Simple valuations}

A valuation $\oZ$ on $\lpn$ is called simple, if $\oZ P=\{o\}$ for every lower dimensional $P\in\lpn$.

\begin{lemma}
\label{ZsimpleW}
Let $\oZ:\lpn\to \ckn$ be an $\,\slnz$ contravariant and translation invariant valuation. If $\,\oZ$ is simple, then  $\oZ [0,1]^{n}=\{o\}$.
\end{lemma}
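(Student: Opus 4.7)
The plan is to exploit a triangulation of $[0,1]^n$ together with the simplicity hypothesis to reduce $\oZ[0,1]^n$ to a Minkowski sum of $\slnz$-transforms of $\oZ T_n$, and then apply Lemma \ref{dimT}. By Lemma \ref{cube-triang}, $[0,1]^n$ admits a triangulation into $n!$ basic simplices $S_1,\dots,S_{n!}$, one of which is $T_n$. Since $\oZ$ is simple, every intersection term $\oZ(S_i\cap S_j)$ appearing in Corollary \ref{Zincl-excl-contra} equals $\{o\}$, so that
\[
\oZ[0,1]^n=\sum_{i=1}^{n!}\oZ S_i.
\]
Writing each $S_i=\phi_i T_n+t_i$ with $\phi_i\in\slnz$ and $t_i\in\Z^n$ and using translation invariance together with $\slnz$ contravariance gives $\oZ S_i=\phi_i^{-t}\oZ T_n$, hence
\[
\oZ[0,1]^n=\sum_{i=1}^{n!}\phi_i^{-t}\oZ T_n.
\]

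By Lemma \ref{dimT}, $\oZ T_n$ is either $\{o\}$ or contains $o$ in its interior. In the latter case, each summand $\phi_i^{-t}\oZ T_n$ likewise contains $o$ in its interior, so the Minkowski sum is $n$-dimensional and contains $o$ in its interior, in particular is not $\{o\}$. Consequently, the lemma is equivalent to the assertion that $\oZ T_n=\{o\}$ whenever $\oZ$ is simple; this is the real content to be proved.

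To establish $\oZ T_n=\{o\}$, I would exploit the symmetry mismatch between the cube and the simplex. The stabilizer of $[0,1]^n$ in $\slnz$ modulo $\Z^n$ is the group of signed permutations of determinant $+1$ (of order $2^{n-1}n!$), whereas the stabilizer of $T_n$ in $\slnz$ realizes essentially only the alternating part of the vertex symmetries of $\{o,e_1,\dots,e_n\}$. Invariance of $\oZ[0,1]^n=\sum_i\phi_i^{-t}\oZ T_n$ under the transpose-inverses of the much larger cube stabilizer, combined with the smaller symmetry of $\oZ T_n$, produces identities on the support function $h(\oZ T_n,\cdot)$ that cannot be satisfied by any convex body with $o$ in its interior, forcing $\oZ T_n=\{o\}$. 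The main obstacle is precisely this last step: extracting a clean support-function contradiction from the symmetry mismatch, keeping careful track of which signed permutations lie in $\slnz$ (determinant $+1$) and how they act on the Minkowski summands. A cleaner alternative route goes through the degree-$n$ component $\oZ_n$ from Lemma \ref{Minkowski-poly}: $\oZ_n$ inherits simplicity and $\slnz$ contravariance, extends by rational scaling and homogeneity to a valuation on $\cpn$, and there the classification of $\sln$-contravariant translation invariant Minkowski valuations from \cite{Ludwig:Minkowski} yields $\oZ_n=c\,\op$; since $\op$ is not simple, simplicity of $\oZ_n$ forces $c=0$, and the easy identity $\oZ[0,1]^n=\oZ_n[0,1]^n$ (obtained by dissecting $[0,k]^n$ into $k^n$ unit cubes and comparing with the polynomial expansion of Theorem \ref{trans-poly}) then concludes the proof.
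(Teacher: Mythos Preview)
Your proposal does not give a complete proof. The first route (the ``symmetry mismatch'' argument) you yourself flag as unfinished: you never actually produce the support-function identities that would force $\oZ T_n=\{o\}$, and there is no reason to expect that the cube's signed-permutation symmetries, acting on a Minkowski sum of $n!$ affine images of $\oZ T_n$, pin down the individual summands.

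The second route has a genuine gap. You want to pass from the $\slnz$-contravariant, degree-$n$ valuation $\oZ_n$ on $\lpn$ to an $\sln$-contravariant valuation on $\cpn$ and then invoke the classification from \cite{Ludwig:Minkowski}. Homogeneity of degree $n$ lets you extend $\oZ_n$ to rational polytopes by $\oZ_n(\tfrac1k Q):=k^{-n}\oZ_n(Q)$, but (i) without any continuity hypothesis there is no way to extend further to all of $\cpn$, and (ii) the extension is still only $\slnz$-contravariant. For $\phi\in\operatorname{SL}_n(\Q)$ there is in general no integer $m$ with $m\phi\in\slnz$ (the determinant becomes $m^n$), so $\slnz$-contravariance together with homogeneity does not upgrade to $\operatorname{SL}_n(\Q)$-contravariance, let alone $\sln$-contravariance. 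Hence the hypothesis of the theorem in \cite{Ludwig:Minkowski} is not available, and the conclusion $\oZ_n=c\,\op$ is unjustified. (Using the lattice classification, Theorem~\ref{contra}, instead would be circular, since its proof goes through Proposition~\ref{cube-contra}, which relies on the present lemma.)

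For comparison, the paper's argument is entirely elementary and stays inside $\lpn$: it decomposes $[0,1]^n$ into two prisms $Q$ and $Q'=e_1+e_2+\eta Q$ (triangles when $n=2$), uses simplicity to get $\oZ[0,1]^n=\oZ Q+\oZ Q'$, and observes that a specific shear $\gamma\in\slnz$ fixes both prisms up to translation. Thus $\gamma^{-t}$ fixes $\oZ[0,1]^n$. Combining $\gamma^{-t}$ with a coordinate rotation $\psi$ yields a map sending $(x_1,x_2,\dots)$ to $(x_1-x_2,x_2,\dots)$ and still preserving $\oZ[0,1]^n$; iterating forces $x_2=0$, and the coordinate rotations then force all coordinates to vanish. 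No appeal to $\oZ T_n$ or to any external classification is needed.
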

\begin{proof}
First, we consider the case $n=2$ to show the idea. In this case $[0,1]^2$ can be triangulated into $T_2$ and $T_2'=e_1+e_2-T_2$, and
hence
\begin{equation*}
\oZ [0,1]^2=\oZ T_2 +\oZ T_2'.
\end{equation*}
The $\sltz$ contravariance and the translation invariance of $\oZ$ imply that
$$
\oZ T_2'=-\oZ T_2.
$$
Let $\phi\in\sltz$ be defined by $\phi e_1=e_2-e_1$, 
$\phi e_2=-e_1$. 
We have $\phi T_2= T_2-e_1$ and $\phi T_2' = T_2' -2 e_1$.
Hence $\oZ [0,1]^2$  is invariant under
$\phi^{-t}$. In addition,  define $\psi\in\sltz$ by $\psi e_1= -e_2$ and $\psi e_2=e_1$. Note that $\oZ [0,1]^2$ is  invariant under $\psi=\psi^{-t}$. 

Suppose that there exists $x=(x_1,x_2)\in \oZ [0,1]^2\backslash \{o\}$ and seek a  contradiction.
By the $\psi$ invariance of $\oZ [0,1]^2$, we may assume that $x_2\neq 0$. We observe that
for $\vartheta=\psi\circ \phi^{-t}$, we have
$$
\vartheta x=(x_1-x_2,x_2)\in \oZ [0,1]^2.
$$
Since $\oZ [0,1]^2$ is invariant under $\vartheta^k$ for any $k\geq 1$, it follows that the points
$(x_1-kx_2,x_2)\in \oZ [0,1]^2$ for any $k\geq 1$. This contradicts the boundedness of $\oZ [0,1]^2$.

Next, let $n\geq 3$ and let
$$
Q=T_2+\sum_{i=3}^n[o,e_i]
\mbox{ \ and \ } Q'=T_2'+\sum_{i=3}^n[o,e_i].
$$
We define $\eta\in\slnz$ by $\eta e_1=-e_1$, $\eta e_2=-e_2$ and $\eta e_j=e_j$ for $j=3,\dots,n$. 

This map satisfies
$Q'=e_1+ e_2+\eta Q$ and $\eta^{-t}=\eta$. Hence 
\begin{equation}
\label{XiT2}
\oZ Q'=\eta \oZ Q.
\end{equation}
In addition, let $\gamma\in\slnz$ be defined by $\gamma e_1=e_2-e_1$, 
$\gamma e_2=-e_1$, and $\gamma e_j=e_j$ for $j=3,\dots,n$. Thus 
\begin{equation}
\label{Phi12x}
\gamma^{-t}(x_1,x_2,\dots,x_n)=(-x_2,x_1-x_2,x_3,\dots,x_n).
\end{equation}
Since $\gamma Q=Q-e_1$ and $\gamma$ commutes
with $\eta$, it follows from (\ref{XiT2}) that
\begin{equation}
\label{Phi12T2}
\gamma^{-t}\oZ Q =\oZ Q
\mbox{ \ and \ }
\gamma^{-t}\oZ Q' =\oZ Q'.
\end{equation} 
We observe that $Q$ and $Q'$ form a polytopal cell decomposition of $[0,1]^{n}$, 
and hence $\oZ [0,1]^{n}=\oZ Q+\oZ Q'$. We conclude from (\ref{Phi12T2}) that
$$
\gamma^{-t}\oZ [0,1]^{n}=\oZ [0,1]^{n}.
$$

Finally,  suppose that there exists $x=(x_1,\dots,x_n)\in \oZ [0,1]^{n}\backslash \{o\}$, and seek a  contradiction.
For $i,m\in\{1,\dots,n\}$ with $i\ne m$, define $\psi_{im}\in \slnz$ by setting $\psi_{im} e_i =e_m$, $\psi_{im} e_m= -e_i$ and $\psi_{im} e_j= e_j$ for $j\ne i,m$.
By the $\psi_{i,i+1}$ invariance of $[0,1]^{n}$ for $i=1,\dots,n-1$, we may assume that $x_2\neq 0$. We deduce by (\ref{Phi12x}) that
$\vartheta=\psi_{21}\circ \gamma^{-t}$ satisfies
$$
\vartheta x=(x_1-x_2,x_2,x_3,\dots,x_n)\in \oZ [0,1]^{n}.
$$
Since $\oZ [0,1]^{n}$ is invariant under $\vartheta^k$ for any $k\geq 1$, it follows that
$$(x_1-kx_2,x_2,x_3,\dots,x_n)\in \oZ [0,1]^{n}$$ for any $k\geq 1$. This contradicts the boundedness of $\oZ [0,1]^{n}$.
\end{proof}

\subsection{The cube}
\label{seccube-contra}

Let $n\geq 2$ and recall that the constant $c$ was defined in  Lemma~\ref{lowdim-contra}.

\begin{lemma}
\label{cube-contra-expand}
If $\,\oZ: \lpn\to \ckn$ is an \,$\slnz$ contravariant and translation invariant  Minkowski valuation,  then
$$
\oZ(k[0,1]^{n})+c\, (k^n-k^{n-1})\op [0,1]^{n}=k^n\oZ [0,1]^{n} 
$$
for $k\in\N$.
\end{lemma}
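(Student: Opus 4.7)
The plan is to dissect $k[0,1]^n$ into the $k^n$ unit lattice cubes $C_{i_1,\dots,i_n}=[0,1]^n+i_1e_1+\dots+i_ne_n$ with $(i_1,\dots,i_n)\in\{0,1,\dots,k-1\}^n$, and then apply Corollary~\ref{Zincl-excl-contra} to this cell decomposition. Because the lower-dimensional intersections contribute $\{o\}$ by Lemma~\ref{lowdim-contra}(i), only the $(n-1)$-dimensional intersections survive, and the identity reduces to
$$\oZ(k[0,1]^n)+\!\!\!\sum_{\dim(C_\alpha\cap C_\beta)=n-1}\!\!\!\oZ(C_\alpha\cap C_\beta)=\sum_\alpha \oZ C_\alpha.$$

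Since each $C_\alpha$ is a translate of $[0,1]^n$, the translation invariance of $\oZ$ collapses the right-hand side to $k^n\oZ[0,1]^n$. For the left-hand sum, two unit cubes meet in an $(n-1)$-dimensional face precisely when they are neighbors in one of the $n$ coordinate directions; for each fixed direction $i\in\{1,\dots,n\}$ there are $(k-1)k^{n-1}$ such pairs, and each shared face is a translate of the unit $(n-1)$-cube $[0,1]^n\cap\{x_i=0\}$ with unit normal $e_i$ and $(n-1)$-dimensional volume $1$. By Lemma~\ref{lowdim-contra}(ii) each such facet contributes $c\,[-e_i,e_i]$ to the Minkowski sum, so
$$\sum_{\dim(C_\alpha\cap C_\beta)=n-1}\!\!\!\oZ(C_\alpha\cap C_\beta)=(k-1)k^{n-1}\,c\sum_{i=1}^n[-e_i,e_i].$$

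To finish, I would identify $\sum_{i=1}^n[-e_i,e_i]$ with $\op[0,1]^n$: the cube $[0,1]^n$ has $2n$ facets with unit normals $\pm e_i$ and facet volumes $1$, so formula (\ref{projbodyP}) yields $\op[0,1]^n=\tfrac12\sum_{i=1}^n\bigl([-e_i,e_i]+[e_i,-e_i]\bigr)=\sum_{i=1}^n[-e_i,e_i]$. Substituting gives $\oZ(k[0,1]^n)+c(k^n-k^{n-1})\op[0,1]^n=k^n\oZ[0,1]^n$, as required.

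The only step that is not pure bookkeeping is verifying that the cubes $C_\alpha$ actually form a cell decomposition in the sense required by Corollary~\ref{Zincl-excl-contra} (each pairwise intersection is either empty or a common face, so that lower-dimensional intersections can be discarded using Lemma~\ref{lowdim-contra}(i)); this is immediate for the standard cubical subdivision of $k[0,1]^n$, so I expect no real obstacle.
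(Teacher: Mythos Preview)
Your proposal is correct and follows essentially the same argument as the paper: subdivide $k[0,1]^n$ into $k^n$ unit cubes, apply Corollary~\ref{Zincl-excl-contra}, count the $(k-1)k^{n-1}$ interior facets in each coordinate direction via Lemma~\ref{lowdim-contra}(ii), and identify $\sum_{i=1}^n[-e_i,e_i]$ with $\op[0,1]^n$ using~(\ref{projbodyP}). The only redundancy is your separate invocation of Lemma~\ref{lowdim-contra}(i), since Corollary~\ref{Zincl-excl-contra} already incorporates that reduction to codimension-one intersections.
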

\begin{proof}
For $k\geq 1$, we subdivide $k[0,1]^{n}$ into a cell decomposition of $k^n$ unit cubes, and hence all $m$-dimensional faces are unit cubes of dimension $m$. For $i=1,\dots,n$, there exist
$k^{n-1}(k-1)$ facets of the cell decomposition which intersect the interior of $k[0,1]^{n}$ and whose
unit normal vector is $e_i$. 
We deduce from Lemma~\ref{lowdim-contra} and Corollary~\ref{Zincl-excl-contra} that
$$
\oZ(k[0,1]^{n})+(k^n-k^{n-1})\sum_{i=1}^nc\,[-e_i,e_i]=k^n\oZ [0,1]^{n}.
$$
The definition of the projection body, (\ref{projbodyP}), gives $[-c,c]^n=c\op [0,1]^{n}$.
 \end{proof}

\begin{prop}
\label{cube-contra}
If $\,\oZ: \lpn\to \ckn$ is an \,$\slnz$ contravariant and translation invariant  Minkowski valuation,  then 
$\oZ [0,1]^{n}=c\op [0,1]^{n}$ .
\end{prop}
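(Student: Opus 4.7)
My plan is to reduce the statement to two results already in the excerpt: the polynomial identity of Lemma~\ref{cube-contra-expand} and the vanishing of simple $\slnz$ contravariant translation invariant valuations on the cube (Lemma~\ref{ZsimpleW}). The bridge between these will be the degree-$n$ part $\oZ_n$ of $\oZ$ supplied by Lemma~\ref{Minkowski-poly}.

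First I would form $\oZ_n\colon \lpn \to \ckn$ as in Lemma~\ref{Minkowski-poly}. It is again an $\slnz$ contravariant and translation invariant Minkowski valuation, and it is homogeneous of degree $n$. The key observation is that $\oZ_n$ is simple. Indeed, for a lattice polytope $P$ with $\dim(P) \leq n-2$, Lemma~\ref{lowdim-contra}(i) yields $\oZ(kP)=\{o\}$ for every $k\in\N$, while for $\dim(P)=n-1$ Lemma~\ref{lowdim-contra}(ii) gives $\oZ(kP)=c\,k^{n-1}|P|\,[-w,w]$; in both cases $h(\oZ(kP),v)/k^n \to 0$ for every $v \in \R^n$, so $\oZ_n P=\{o\}$. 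Therefore Lemma~\ref{ZsimpleW} applies to $\oZ_n$ and gives $\oZ_n[0,1]^{n}=\{o\}$.

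Finally, I would pass to support functions in Lemma~\ref{cube-contra-expand}: for every $v \in \R^n$,
$$
h(\oZ(k[0,1]^{n}),v) + c\,(k^n-k^{n-1})\,h(\op [0,1]^{n},v) = k^n\,h(\oZ [0,1]^{n},v).
$$
Dividing by $k^n$ and letting $k \to \infty$, the first term on the left converges to $h(\oZ_n[0,1]^{n},v)=0$ and the coefficient $c(1-1/k)$ converges to $c$, so that
$c\,h(\op [0,1]^{n},v) = h(\oZ [0,1]^{n},v)$ for all $v$, which is the desired identity $\oZ [0,1]^{n}=c\,\op [0,1]^{n}$.

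I do not foresee a real obstacle: the simpleness of $\oZ_n$ is the only step requiring a moment of care, and it is immediate from Lemma~\ref{lowdim-contra} once one observes that $\oZ(kP)$ grows strictly slower than $k^n$ in $k$ whenever $\dim(P)<n$.
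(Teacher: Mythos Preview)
Your argument is correct and follows the same overall strategy as the paper: form the degree-$n$ part $\oZ_n$, show it is simple so that Lemma~\ref{ZsimpleW} gives $\oZ_n[0,1]^n=\{o\}$, and then divide the identity of Lemma~\ref{cube-contra-expand} by $k^n$ and pass to the limit. The only minor difference is in the justification of simpleness: the paper applies Lemma~\ref{cube-contra-expand} to $\oZ_n$ itself (homogeneity of degree $n$ then forces the constant associated to $\oZ_n$ to vanish), whereas you read off directly from Lemma~\ref{lowdim-contra} that $\oZ(kP)$ grows at most like $k^{n-1}$ for lower-dimensional $P$; both routes are equally short and valid.
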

\begin{proof}
Consider the $\slnz$ contravariant and translation invariant  valuation $\oZ_n$ defined in Lemma~\ref{Minkowski-poly}. Since $\oZ_n$ is homogeneous of degree $n$, we deduce from
Lemma~\ref{cube-contra-expand} applied to $\oZ_n$ that $\oZ_n$ is simple. Lemma \ref{ZsimpleW} implies that $\oZ_n [0,1]^{n}=\{o\}$. In particular,
we have $\lim_{k\to\infty}\oZ(k[0,1]^{n})/k^n=\{o\}$. Next we apply Lemma~\ref{cube-contra-expand} to $\oZ$. Dividing both sides by $k^n$ and letting $k\to\infty$ implies that $\oZ [0,1]^{n}=c \op [0,1]^{n}$.
\end{proof}

\subsection{The planar  case}
\label{secplanar}

It is easy to see that
$$
\rho_{\pi/2}\,\phi\,\rho_{-\pi/2}=\phi^{-t}\mbox{ \ for any $\phi\in\slt$}.
$$
As in \cite{Ludwig:Minkowski}, we deduce the following result.

\begin{lemma}
\label{planar-contra-equi}
An operator $\oZ: \lpt \to \ckt$  is \,$\sltz$ equi\-variant if and only if
$\,\rho_{\pi/2}\oZ: \lpt \to \ckt$ is  \,$\sltz$ contravariant.
\end{lemma}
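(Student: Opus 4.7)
The plan is to exploit the displayed identity $\rho_{\pi/2}\,\phi\,\rho_{-\pi/2}=\phi^{-t}$ for $\phi\in\slt$ (which in particular holds for $\phi\in\sltz$, since $\sltz\subset\slt$), and chase the definitions. The key observation is that conjugation by $\rho_{\pi/2}$ turns the equivariance condition $\oZ(\phi P)=\phi\,\oZ P$ into the contravariance condition for $\rho_{\pi/2}\oZ$, and vice versa, so the equivalence follows from a direct symmetric computation.

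Concretely, I would proceed as follows. Suppose first that $\oZ$ is $\sltz$ equivariant, and let $\phi\in\sltz$ and $P\in\lpt$. Then
\[
(\rho_{\pi/2}\oZ)(\phi P)=\rho_{\pi/2}\,\phi\,\oZ P=\rho_{\pi/2}\,\phi\,\rho_{-\pi/2}\,\rho_{\pi/2}\,\oZ P=\phi^{-t}\,(\rho_{\pi/2}\oZ)(P),
\]
which is exactly $\sltz$ contravariance of $\rho_{\pi/2}\oZ$. Conversely, assume $\rho_{\pi/2}\oZ$ is $\sltz$ contravariant. For $\phi\in\sltz$ and $P\in\lpt$, applying $\rho_{-\pi/2}$ to the identity $(\rho_{\pi/2}\oZ)(\phi P)=\phi^{-t}(\rho_{\pi/2}\oZ)(P)$ and using $\rho_{-\pi/2}\,\phi^{-t}\,\rho_{\pi/2}=\phi$ (which is the inverse form of the displayed identity) yields $\oZ(\phi P)=\phi\,\oZ P$, establishing $\sltz$ equivariance of $\oZ$.

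There is essentially no obstacle beyond correctly handling the inverse direction of the conjugation identity; one just needs to note that $\rho_{-\pi/2}=\rho_{\pi/2}^{-1}$ and that $(\phi^{-t})^{-t}=\phi$, so the roles of $\phi$ and $\phi^{-t}$ can be swapped freely. Both Minkowski addition and the intersection/union relations are preserved by the linear map $\rho_{\pi/2}$, so the valuation property of $\rho_{\pi/2}\oZ$ is automatic and need not be re-examined here.
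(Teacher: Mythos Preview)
Your proof is correct and follows exactly the approach indicated in the paper, which simply records the identity $\rho_{\pi/2}\,\phi\,\rho_{-\pi/2}=\phi^{-t}$ and defers the routine verification to \cite{Ludwig:Minkowski}. You have merely spelled out the two-line chase of definitions that the paper leaves implicit.
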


For the next lemma, recall that for given $\oZ$, the constant $c$ was defined in  Lemma~\ref{lowdim-contra}.

\begin{lemma}
\label{ZT20}
If $\,\oZ: \lpt \to \ckt$ is an $\,\sltz$ contravariant and trans\-lation invariant  Minkowski valuation, then
there exist $a,b\geq 0$ such that
$$
\oZ T_2=a\,\rho_{\pi/2}(T_2-\cen(T_2))+b\,\rho_{\pi/2}(-T_2+\cen(T_2))
$$
and  $a+b=2 c$.
\end{lemma}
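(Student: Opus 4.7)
The plan is to determine $\oZ T_2$ in three steps: compute its difference body via inclusion-exclusion, exploit a cyclic $\sltz$-symmetry of $T_2$, and combine these to pin down its shape.

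First I form the cell decomposition $[0,1]^2=T_2\cup T_2'$ with $T_2'=e_1+e_2-T_2$ and $T_2\cap T_2'=[e_1,e_2]$. Since $-I\in\sltz$ and $(-I)^{-t}=-I$, translation invariance and contravariance yield $\oZ T_2'=-\oZ T_2$. Combining this with $\oZ[0,1]^2=c([-e_1,e_1]+[-e_2,e_2])$ from Proposition~\ref{cube-contra} and $\oZ[e_1,e_2]=c[-(e_1+e_2),e_1+e_2]$ from Lemma~\ref{lowdim-contra}(ii), the valuation identity $\oZ[0,1]^2+\oZ[e_1,e_2]=\oZ T_2+\oZ T_2'$ gives
$$
\oZ T_2-\oZ T_2=c\bigl([-e_1,e_1]+[-e_2,e_2]+[-(e_1+e_2),e_1+e_2]\bigr)=2c\,\rho_{\pi/2}(T_2-T_2),
$$
the second equality being the zonotope decomposition of the hexagon $\rho_{\pi/2}(T_2-T_2)$.

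Next, I introduce the order-three element $A=\begin{pmatrix}-1&-1\\1&0\end{pmatrix}\in\sltz$, which satisfies $AT_2=T_2-e_1$; contravariance and translation invariance then force $\oZ T_2=A^{-t}\oZ T_2$. A direct check shows $A(T_2-\cen(T_2))=T_2-\cen(T_2)$, which, via the identity $\rho_{\pi/2}A\rho_{-\pi/2}=A^{-t}$, implies that both $\rho_{\pi/2}(T_2-\cen(T_2))$ and $\rho_{\pi/2}(-T_2+\cen(T_2))$ are $A^{-t}$-invariant. Since $\oZ T_2$ is a Minkowski summand of the hexagon $2c\,\rho_{\pi/2}(T_2-T_2)$, it is a polytope whose outward normals lie among the six normals $\pm e_1,\pm e_2,\pm(e_1-e_2)$ of this hexagon, and $A^{-t}$-invariance forces these normals to form a union of $A$-orbits. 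A direct calculation gives exactly two orbits, $O_1=\{e_1,e_2-e_1,-e_2\}$ and $O_2=-O_1$, which are precisely the outward normal sets of $\rho_{\pi/2}(T_2-\cen(T_2))$ and $\rho_{\pi/2}(-T_2+\cen(T_2))$ respectively.

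Finally, $A^{-t}$-invariance pins down the edge lengths within each orbit up to one positive scalar, so $\oZ T_2=a\,\rho_{\pi/2}(T_2-\cen(T_2))+b\,\rho_{\pi/2}(-T_2+\cen(T_2))+z$ for some $a,b\geq 0$ and $z\in\R^2$. Since $o$ is the only fixed vector of $A^{-t}$, one gets $z=o$, and comparing difference bodies with the first step yields $a+b=2c$. The hardest part is the rigidity argument in this last step: because $A^{-t}$ is not an isometry (for instance $|A^{-t}e_2|=\sqrt 2$), the three edges of $\oZ T_2$ in a single $A$-orbit have lengths in a non-constant geometric ratio dictated by $A^{-t}$, and one must verify that this ratio coincides with the ratio $1:\sqrt 2:1$ of the edge lengths of $\rho_{\pi/2}(T_2-\cen(T_2))$, so that this triangle is indeed the unique (up to positive scaling) $A^{-t}$-invariant triangle with outward normal set $O_1$.
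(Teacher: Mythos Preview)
Your proof is correct and follows essentially the same approach as the paper: the same cell decomposition $[0,1]^2=T_2\cup T_2'$ with $\oZ T_2'=-\oZ T_2$, and the same order-three symmetry element (your $A$ coincides with the paper's $\phi$, defined by $\phi e_1=e_2-e_1$, $\phi e_2=-e_1$). The only difference is in the final step: the paper observes directly that $\phi^{-t}$-invariance forces the support function $h(\oZ T_2,\cdot)$ to be constant on each $\phi$-orbit $O_1$ and $O_2$, which pins down the shape immediately; you instead track how edge lengths scale under the non-isometric map $A^{-t}$. Your route works, but your ``hardest part'' is in fact automatic: having already verified that $\rho_{\pi/2}(T_2-\cen(T_2))$ is $A^{-t}$-invariant with outward normal set $O_1$, and having argued that the $A^{-t}$-invariant triangle with normals $O_1$ is unique up to scaling, the ratio match $1:\sqrt2:1$ follows without a separate computation.
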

\begin{proof} 
We dissect $[0,1]^2$ into the triangles $T_2$ and $T_2'=e_1+e_2-T_2$. Since $\oZ$ is a valuation, 
$$
\oZ [0,1]^2 +\oZ (T_2\cap T_2')=\oZ T_2 +\oZ T_2'.
$$
Combining Lemma~\ref{lowdim-contra} and Proposition~\ref{cube-contra} leads to
\begin{equation}
\label{ZT2a}
c\,[-1,1]^2+c\,[-(e_1+e_2),e_1+e_2]=\oZ T_2 +\oZ T_2'.
\end{equation}
Therefore $\oZ T_2$ is a Minkowski summand of the hexagon on the left hand side. If $c=0$, then $\oZ T_2=\{o\}$, and
Lemma~\ref{lowdim-contra} implies that $\oZ$ is simple. Therefore $\oZ P=\{o\}$ for all $P\in{\cal P}^2$.
So let $c>0$. Since $\oZ$ is $\sltz$ contravariant, we have $\oZ T_2'=-\oZ T_2$ and (\ref{ZT2a}) implies that 
$\oZ T_2$ is a two-dimensional polygon whose sides are parallel to $e_1$ or $e_2$ or $e_1+e_2$. 
In addition, let $\phi\in\sltz$ be defined by $\phi e_1=e_2-e_1$ and
$\phi e_2=-e_1$. 
Then $\oZ T_2$ is invariant under 
$\phi ^{-t}$. 
Since $\phi$ permutes  $e_1,-e_1+e_2, -e_2$ on the one hand
and  $-e_1,e_1-e_2, e_2$ on the other hand,  we  have 
$h(\oZ T_2, e_1)= h(\oZ T_2, -e_1+e_2) =h(\oZ T_2, -e_2)$
and
$h(\oZ T_2, -e_1)= h(\oZ T_2, e_1-e_2) =h(\oZ T_2, e_2)$.
Thus it is easy to check that
$$
\oZ T_2=a \,\rho_{\pi/2}(T_2-\cen(T_2))+b\,\rho_{\pi/2}(-T_2+\cen(T_2))
$$
for suitable $a,b\geq 0$.
From (\ref{ZT2a}) we obtain  
$a+b=2c$. 
\end{proof}

\noindent {\bf Proof of Theorem~\ref{contra} in the planar case.}
It follows from Proposition~\ref{disc-steiner} that $P\mapsto P-\dst(P)$ and $P\mapsto -P+\dst(P)$ are 
 $\sltz$ equivariant and translation invariant Minkowski valuations on $\lpt$. We deduce from Lemma~\ref{planar-contra-equi} that $P\mapsto \rho_{\pi/2}(P-\dst(P))$ and 
$P\mapsto \rho_{\pi/2}(-P+\dst(P))$ are
 $\sltz$ contra\-variant and translation invariant  Minkowski valuations.

Since $\dst(T_i)=\cen(T_i)$ for $i=1,2$ by Proposition~\ref{disc-steiner}, combining Lemma~\ref{ZT20},  Lemma~\ref{lowdim-contra} and Corollary~\ref{BetkeKneserinvariance} shows that any  $\sltz$ contravariant and translation invariant  Minkowski valuation $\oZ$ is
of the form
$$
\oZ P=a\,\rho_{\pi/2}(P-\dst(P))+b\,\rho_{\pi/2}(-P+\dst(P)),
$$
where $a,b\geq 0$. \hfill$\qed$

\subsection{Proof of Theorem~\ref{contra} for $\mathbf{n\ge 3}$}

For Minkowski summands, 
we need the following (probably well-known) statement, for which we have not found a reference. Let $n\ge 3$.

\begin{lemma}
\label{zonotope}
Let $v_1,\dots,v_m$ be vectors in $\,\R^n$ such that any $n$ of these vectors are linearly independent. 
If $\,P$ is an $n$-dimensional polytope such that every edge of $P$ is parallel to some $v_i$, 
then $P$ is a translate of $\,\sum_{i=1}^m a_i\,[o,v_i]$ 
with $a_i\geq 0$.
\end{lemma}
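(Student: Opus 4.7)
The plan is to reduce the claim to the classical characterization of zonotopes: an $n$-dimensional convex polytope is a zonotope if and only if each of its 2-faces is centrally symmetric (see, e.g., Schneider \cite{Schneider:CB2}). The work lies in showing that the hypothesis on $\{v_1,\dots,v_m\}$ forces every 2-face of $P$ to be a parallelogram. Note first that since $P$ is $n$-dimensional, its edge directions span $\R^n$, so at least $n$ linearly independent $v_i$'s appear; in particular $m\ge n$ and the general-position hypothesis is not vacuous.

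The key observation is that the assumption, together with $n\geq 3$, implies that no three of the $v_i$ lie in a common 2-dimensional linear subspace. Indeed, if $v_{i_1}, v_{i_2}, v_{i_3}$ spanned a subspace of dimension at most $2$, then adjoining any further $n-3$ vectors from the list would yield $n$ vectors spanning a subspace of dimension at most $(n-3)+2 = n-1$, contradicting the linear independence assumption. As a special case, no two $v_i$ are parallel. Now let $F$ be a 2-face of $P$. Each edge of $F$ is an edge of $P$, so it is parallel to some $v_i$, and its direction lies in the 2-dimensional linear subspace parallel to $\aff F$. By the previous observation at most two non-parallel $v_i$ lie in this plane, so the edges of $F$ belong to at most two parallelism classes. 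A convex polygon with only two edge directions is a parallelogram, and in particular is centrally symmetric.

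The zonotope characterization therefore applies: there exist a translate $t$, generators $u_j\ne o$ and coefficients $a_j' > 0$ with $P = t + \sum_{j=1}^N a_j'[o,u_j]$. The generators $u_j$ are (up to sign and positive scaling) precisely the edge directions of $P$, and each is thus proportional to some $v_{\sigma(j)}$; the pairwise non-parallelism of the $v_i$ established above makes $\sigma(j)$ unique, and after absorbing the sign and positive scale factor into $a_j'$ and translating, one rewrites the sum as $P = t' + \sum_{i=1}^m a_i[o,v_i]$ with $a_i\ge 0$ (setting $a_i = 0$ for those $v_i$ that are not edge directions of $P$). The main obstacle is the second paragraph — extracting the parallelogram structure of each 2-face from the general-position hypothesis; once that is in hand, the conclusion amounts to invoking the classical zonotope theorem and repackaging the segment decomposition using pairwise non-parallelism of the $v_i$.
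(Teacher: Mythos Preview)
Your proof is correct and follows essentially the same route as the paper's: both reduce to the classical characterization of zonotopes via centrally symmetric 2-faces, and both obtain the parallelogram structure of every 2-face from the observation that the general-position hypothesis (together with $m\ge n$, which you correctly justify from $\dim P=n$) forces any three of the $v_i$ to be linearly independent, so at most two of them can lie in a given 2-plane. Your presentation is in fact more direct than the paper's, which detours through projections $\pi_{v_m}P$ and an edge-path argument to reach the same conclusion.
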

\begin{proof} 
A polytope is a zonotope, if all its two-dimensional faces are centrally symmetric (cf.~\cite[Theorem 3.5.2]{Schneider:CB2}). Thus it is sufficient to show that $P$ has centrally symmetric two-dimensional faces.

  We may assume that $P$ has an edge parallel to $v_i$ for every $i=1,\dots,m$. Let
$[x,y]$ be an edge of $P$ parallel to $v_m$ where $y=x+a_mv_m$ for $a_m>0$. We claim that for any vertex $w$ of $\pi_{v_m}P$, there exists a vertex $z$ of $P$ such that
\begin{equation}
\label{edgevk}
\mbox{$\pi_{v_m}z=w$ and $z+a_m v_m$ is a vertex of $P$.}
\end{equation}
Since there is a path of the edge graph of $\pi_{v_m}P$ connecting $\pi_{v_m} x$ and $w$, we may assume that 
$[w,\pi_{v_m} x]$ is an edge of $\pi_{v_m}P$. Let $L$ be the span of $v_m$ and $w-\pi_{v_m}x$.
It follows that $(x+L)\cap P$ is a two-dimensional face of $P$. As no three vectors from $V=\{v_1,\dots,v_m\}$ are linearly dependent, we deduce that for some $i\in\{1,\dots,m-1\}$ we have
$L\cap V=\{v_i,v_m\}$. As the two-dimensional face $F=(x+L)\cap P$ has only edges parallel to $v_i$ and $v_m$, it is
 a parallelogram and hence centrally-symmetric.
\end{proof}

Let $c\geq 0$ be the constant of Lemma~\ref{lowdim-contra}.
First we use the triangulation $S_1,\dots,S_{n!}$ of $[0,1]^{n}$ into basic simplices
given by Lemma~\ref{cube-triang} with $S_1=T_n$. If $\dim (S_i\cap S_j)=n-1$, then 
Lemma~\ref{lowdim-contra} provides a non-zero $p_{ij}\in\R^n$ such that
$\oZ(S_i\cap S_j)=c[-p_{ij},p_{ij}]$. Applying  Corollary~\ref{Zincl-excl-contra} and  Proposition~\ref{cube-contra} to the cell decomposition of $[0,1]^{n}$, we deduce that
\begin{equation}
\label{Wtriang-contra}
[-c,c]^n+\sum_{\dim (S_i\cap S_j)=n-1}c\,[-p_{ij},p_{ij}]=\sum_{i=1}^{n!} \oZ S_i .
\end{equation}
If $c=0$, then (\ref{Wtriang-contra}) implies that $\oZ T_n=\oZ S_1=\{o\}$ and we conclude from Lemma~\ref{lowdim-contra} and
Corollary~\ref{BetkeKneserinvariance} that $\oZ P=\{o\}$ for $P\in\lpn$.

Assume that $c>0$. Hence the left hand side of (\ref{Wtriang-contra}) is full dimensional. Since 
$\oZ S_i=\phi_i^{-t}\oZ T_n$ for   $\phi_i\in \slnz$ with $S_i=\phi_i T_n$, it follows that $\oZ T_n\neq\{o\}$.
We deduce from Lemma~\ref{dimT} that $\dim(\oZ  T_n)=n$.

We consider the decomposition of $[0,1]^{n}$ into $T_n$ and $R_n$ from (\ref{TQW}).
For $w=-e_1-\dots-e_n$, Lemma~\ref{lowdim-contra} implies that
$$
\oZ [e_1,\dots,e_n]=\frac{c}{(n-1)!}\,[-w,w].
$$
Therefore it follows from Proposition~\ref{cube-contra} that (\ref{TQW}) can be written in the form
\begin{equation}
\label{TQW0}
[-c,c]^n+\frac{c}{(n-1)!}\,[-w,w]=\oZ T_n+\oZ R_n.
\end{equation}
We deduce from (\ref{TQW0}) right away that $\oZ T_n$ is a polytope.
Each edge of the left side of (\ref{TQW0}) is parallel to either $w$ or to an $e_i$, $i=1,\dots,n$. As any $n$ of the vectors
$w,e_1,\dots,e_n$ are linearly independent, (\ref{TQW0}) and Lemma~\ref{zonotope} imply that
\begin{equation}\label{a0n}
\oZ T_n \text { is a translate of  }\, a_0 [o, w]+ a_1[ o ,e_1]+\dots+a_n [o, e_n]
\end{equation}
where $a_i\geq 0$ for $i=0,\dots,n$. 

Let $\alpha$ be defined as in (\ref{alpha}) and let $\rho\in\sym$. Note that the map $\alpha^{-1}\rho \alpha \in \slnz$. Hence, by the $\sln$ contravariance of $\oZ$, 
$$
\alpha^{-t} \oZ T_n = \rho \alpha^{-t} \oZ T_n.$$ 
Since this holds for all $\rho\in\sym$, it follows that $o=\cen( \alpha^{-t}\oZ T_n)=\cen(\oZ T_n)$.
Since $\rho\in\sym$ permutes the normal vectors of $T_*$, it permutes $\alpha^{-t}w$, $\alpha^{-t} e_1$, $\dots$, $\alpha^{-t}e_n$ and  we obtain $a_0=\dots=a_n$ in (\ref{a0n}). 
Taking into account (\ref{projbodyP}), we conclude that $\oZ T_n =c_0 \op T_n$ for $c_0>0$.

To determine $c_0$, we deduce from (\ref{Wtriang-contra}) that
$$
[-c,c]^n+\sum_{ \dim(S_i\cap S_j)=n-1}c\,[-p_{ij},p_{ij}]=c_0\, \sum_{i=1}^{n!} \phi_i^{-t}\op T_n.
$$
The $\slnz$ contravariant and translation invariant  Minkowski valuation  $c\op$ also satisfies
 (\ref{Wtriang-contra}). Hence $c_0=c$. Thus  Theorem~\ref{contra} follows from Corollary~\ref{BetkeKneserinvariance}.

\subsection{Proof of Theorem \ref{contrai}}

First, let $n=2$.
Propositions \ref{complementation}  and \ref{disc-steiner} imply that $6\, \dst(P)\in \Z^2$ for all $P\in\lpt$. 
Hence, for integers $a,b\ge 0$ with $b-a\in 6 \,\Z$, the operator $\oZ$ defined by
$$P \mapsto a\,\rho_{\pi/2}(P-\dst(P))+b\,\rho_{\pi/2}({-P}+\dst(P))$$
maps $\lpt$ to $\lpt$. For the reverse direction, let $\oZ:\lpt \to \lpt$ be an $\sltz$ contravariant and translation invariant Minkowski valuation.  By Theorem \ref{contra}, we know that there are $a,b\ge 0$ such that
$$\oZ P= a\,\rho_{\pi/2}(P-\dst(P))+b\,\rho_{\pi/2}({-P}+\dst(P))$$
for every $P\in \lpt$.
Since $\oZ P\in \lpt$ for all $P\in\lpt$, setting $P=T_1$ shows that $a+b\in 2\, \Z$ and setting $P=T_2$ shows that $2 a+b, a+2 b\in 3\,\Z$. Thus $a,b\in \Z$ and $b-a\in 6\,\Z$.

\goodbreak
Next, let $n\ge 3$. By (\ref{projbodyP}) and (\ref{Minkrel}), the projection body of $P\in \lpn$ with facet normals $u_1,\dots, u_m$ and corresponding facets $F_1, \dots, F_m$ is 
$$\sum_{i=1}^m \vert F_i\vert \,[o,u_i].$$
Since every facet can be triangulated and the $(n-1)$-dimensional volume of an $(n-1)$-dimensional lattice simplex is an integer multiple of $1/(n-1)!$, we have $P \mapsto c \op P$ with $c\in (n-1)!\, \Z$ is an operator that maps $\lpn$ to $\lpn$.  For the reverse direction, let $\oZ:\lpn \to \lpn$ be an $\slnz$ contravariant and translation invariant Minkowski valuation.  By Theorem~\ref{contra}, we know that there is $c\ge 0$ such that $\oZ P=c\, \op P$ for every $P\in \lpn$. 
Since $\op T_{n-1} = 1/(n-1)!\,[-e_n, e_n]$, we conclude that $c\in(n-1)!\,\Z$.

\section{Equivariant valuations}

For $a,b\ge0$, define $\oZ_{a,b}: \lpn\to\ckn$ by
\begin{equation}\label{ozab}
\oZ_{a,b} P = a(P-\dst(P))+b(-P+\dst(P)).
\end{equation}
Note that $\oZ_{a,b}$ is an $\slnz$ equi\-variant and translation invariant Minkowski valuation on $\lpn$. In this section, we prove Theorem \ref{equi}, that is, we prove that for every $\slnz$ equi\-variant and translation invariant Minkowski valuation $\oZ$ on $\lpn$, 
there are $a,b\ge 0$ such that $\oZ=\oZ_{a,b}$.

\goodbreak
We deduce from Theorem~\ref{contra} in the planar case and from Lemma~\ref{planar-contra-equi} the planar case of Theorem~\ref{equi}.

\begin{prop}
\label{equi2}
If $\,\oZ: \lpt \to \ckt$ is an $\,\sltz$ equivariant and translation invariant  Minkowski valuation,  then there exist $a,b\geq 0$ such that $\oZ=\oZ_{a,b}$.
\end{prop}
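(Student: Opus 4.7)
The plan is to reduce this statement directly to the contravariant case already settled in Theorem~\ref{contra}(i), using Lemma~\ref{planar-contra-equi}. The key observation is that $\rho_{\pi/2}$ is a linear isometry of $\R^2$, so it commutes with Minkowski addition and preserves the valuation property, the Minkowski-valuation property, and translation invariance.

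First, I would define $\oZ': \lpt \to \ckt$ by $\oZ' P = \rho_{\pi/2} \oZ P$. Since $\rho_{\pi/2}(K + L) = \rho_{\pi/2} K + \rho_{\pi/2} L$ and $\oZ$ is a Minkowski valuation, so is $\oZ'$. Translation invariance is immediate: $\oZ'(P + x) = \rho_{\pi/2} \oZ(P + x) = \rho_{\pi/2} \oZ P = \oZ' P$ for $x \in \Z^2$ and $P \in \lpt$. By Lemma~\ref{planar-contra-equi}, $\oZ'$ is $\sltz$ contravariant.

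Next, I would invoke the already-proved planar case of Theorem~\ref{contra} applied to $\oZ'$, which yields constants $a, b \geq 0$ such that
$$
\oZ' P = a\,\rho_{\pi/2}(P - \dst(P)) + b\,\rho_{\pi/2}(-P + \dst(P))
$$
for every $P \in \lpt$. Finally, applying $\rho_{-\pi/2}$ to both sides (which again commutes with Minkowski addition and positive scaling) gives
$$
\oZ P = a(P - \dst(P)) + b(-P + \dst(P)) = \oZ_{a,b}\, P
$$
for every $P \in \lpt$, as desired. Since both $P \mapsto P - \dst(P)$ and $P \mapsto -P + \dst(P)$ are already known (from Proposition~\ref{disc-steiner} together with the linearity of Minkowski addition and negation) to be $\sltz$ equivariant and translation invariant Minkowski valuations, the converse direction requires no separate argument.

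There is essentially no obstacle here; the entire content of the proof is the symmetry between the equivariant and contravariant cases in dimension two, which is precisely the role of Lemma~\ref{planar-contra-equi}. The only points requiring care are verifying that applying $\rho_{\pi/2}$ genuinely preserves the Minkowski-valuation and translation-invariance properties, and then tracking the action of $\rho_{-\pi/2}$ through the formula at the end.
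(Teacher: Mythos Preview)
Your proposal is correct and follows exactly the paper's own approach: the paper states Proposition~\ref{equi2} immediately after defining $\oZ_{a,b}$ and simply remarks that it is deduced from the planar case of Theorem~\ref{contra} together with Lemma~\ref{planar-contra-equi}. Your write-up is a faithful fleshing-out of that one-line deduction.
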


As in the contravariant case, the following lemma is a simple consequence of the symmetry properties of $T_n$ and the equivariance of $\oZ$.

\begin{lemma}
\label{dimTco}
Let $\oZ: \lpn \to \ckn$ be an $\,\slnz$  equivariant and trans\-lation invariant Minkowski valuation.
If
$\,\oZ T_n \neq\{o\}$, then $o\in\Int (\oZ T_n)$.
\end{lemma}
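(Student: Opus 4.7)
The plan is to exploit the $\slnz$-symmetries of $T_n$ (modulo $\Z^n$-translations) to force $K:=\oZ T_n$, whenever nontrivial, to be full-dimensional and centered at $o$.

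First I would identify the group $G\subset\slnz$ consisting of those $\phi$ for which $\phi T_n=T_n-z$ for some $z\in\Z^n$. Such a $\phi$, together with the translation by $z$, permutes the vertex set $\{o,e_1,\dots,e_n\}$ of $T_n$, and a short computation shows that the sign of the induced permutation equals $\det\phi$; so $G$ is isomorphic to the alternating group $A_{n+1}$. Using the transformation $\alpha$ of Section \ref{secn-2}, $G$ is conjugate to the group $\sym$ of orientation preserving isometries of the regular simplex $T_*$, i.e.\ to $A_{n+1}$ acting on $\R^n$ as the rotation group of a regular simplex. Translation invariance combined with $\slnz$-equivariance yields
$$\phi K=\oZ(\phi T_n)=\oZ(T_n-z)=\oZ T_n=K$$
for every $\phi\in G$, so $K$ is $G$-invariant.

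Next I would invoke the fact that this representation of $A_{n+1}$ on $\R^n$ is irreducible for every $n\ge 2$: for $n=2$ it is the group of planar rotations by multiples of $2\pi/3$, which is irreducible over $\R$; for $n\ge 3$ the standard $n$-dimensional representation of $S_{n+1}$ restricts irreducibly to $A_{n+1}$. Consequently the only $G$-fixed vector is $o$ and the only $G$-invariant linear subspaces of $\R^n$ are $\{o\}$ and $\R^n$.

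Finally I would assume $K\ne\{o\}$ and conclude: averaging any point of $K$ over $G$ produces a $G$-fixed point lying in $K$ by convexity, hence $o\in K$; the affine hull $\aff K$ is a $G$-invariant linear subspace through $o$ and therefore equals $\R^n$ by irreducibility; and the centroid $\cen K$ is $G$-fixed, hence $\cen K=o$. Since the centroid of a full-dimensional convex body lies in its interior, $o\in\Int K$, which is what we want. The only mildly delicate step is pinning down $G\cong A_{n+1}$ (a short determinant calculation on the vertex permutation) together with the irreducibility check in the small cases $n=2,3$; both are entirely classical, so no real obstacle is expected.
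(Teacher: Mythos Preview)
Your argument is correct and is precisely the ``symmetry of $T_n$'' reasoning the paper alludes to but does not spell out: $K=\oZ T_n$ is invariant under the group $G\cong A_{n+1}$ (equivalently $\sym$ after the conjugation $\alpha$ of Section~\ref{secn-2}), and the irreducibility of this representation forces $o=\cen K\in\Int K$ whenever $K\neq\{o\}$. The only point worth noting is that the paper carries out essentially the same centroid-fixed-by-$\sym$ computation explicitly in Lemma~\ref{cTm:lemma}, so your appeal to irreducibility is in the same spirit and entirely in line with the intended approach.
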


The proof of the following result is analogous to the proof of  
Lemma~\ref{ZsimpleW} in the contravariant case.

\begin{lemma}
\label{ZsimpleWco}
Let $\oZ:\lpn\to \ckn$ be an $\,\slnz$ equivariant and trans\-lation invariant valuation. 
If $\,\oZ$ is simple, then  $\oZ [0,1]^{n}=\{o\}$.
\end{lemma}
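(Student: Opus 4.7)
My plan is to adapt the proof of Lemma~\ref{ZsimpleW} to the equivariant setting. The combinatorial skeleton---dissection of the cube into pieces related by unimodular symmetries, reduction via simplicity to $\oZ[0,1]^n$ being the sum of $\oZ$ on the pieces, and deriving a contradiction by producing an unbounded orbit inside $\oZ[0,1]^n$---carries over unchanged. The essential difference is that equivariance produces invariance of $\oZ[0,1]^n$ under a matrix $\phi\in\slnz$ itself rather than under $\phi^{-t}$, so the composition whose iterates I feed into the final step must be chosen accordingly.

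For $n=2$, I would dissect $[0,1]^2=T_2\cup T_2'$ with $T_2'=e_1+e_2-T_2$ and use simplicity to get $\oZ[0,1]^2=\oZ T_2+\oZ T_2'$. I would then exhibit elements $\eta,\gamma,\psi\in\sltz$ that respectively (i) relate $T_2'$ to $T_2$ via $T_2'=e_1+e_2+\eta T_2$ with $\eta=-I$, (ii) stabilise $T_2$ and $T_2'$ up to $\Z^2$-translation (taking $\gamma e_1=e_2-e_1$, $\gamma e_2=-e_1$, one checks $\gamma T_2=T_2-e_1$ and $\gamma T_2'=T_2'-2e_1$), and (iii) stabilise $[0,1]^2$ up to $\Z^2$-translation (a quarter turn, say $\psi e_1=-e_2$, $\psi e_2=e_1$). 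Equivariance and translation invariance then give $\oZ T_2'=-\oZ T_2$ and invariance of $\oZ[0,1]^2$ under both $\gamma$ and $\psi$. If a nonzero $x\in\oZ[0,1]^2$ exists, $\psi$-invariance lets me arrange $x_1\ne 0$, and then $\vartheta=\psi\circ\gamma$ acts as $(y_1,y_2)\mapsto(y_1,y_1+y_2)$, so the iterates $\vartheta^k x=(x_1,x_2+kx_1)$ lie in $\oZ[0,1]^2$ but are unbounded, a contradiction.

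For $n\ge 3$, I would use the cell decomposition $[0,1]^n=Q\cup Q'$ with $Q=T_2+\sum_{i=3}^n[o,e_i]$ and $Q'=T_2'+\sum_{i=3}^n[o,e_i]$ from the proof of Lemma~\ref{ZsimpleW}, together with the same maps $\eta,\gamma,\psi_{im}\in\slnz$ (acting as identity on coordinates $i\ge 3$). Simplicity gives $\oZ[0,1]^n=\oZ Q+\oZ Q'$, and the translation relations stabilising $Q$, $Q'$ and $[0,1]^n$ modulo $\Z^n$ are unchanged. Since each $\psi_{im}$ satisfies $\psi_{im}=\psi_{im}^{-t}$, the invariances it produces are identical to those in the contravariant case; the only substantive change is that $\gamma$ itself, rather than $\gamma^{-t}$, now fixes $\oZ[0,1]^n$. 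Given a putative $x\in\oZ[0,1]^n\setminus\{o\}$, suitable $\psi_{i,i+1}$'s move a nonzero coordinate to position $1$, and the iterates of $\vartheta=\psi_{21}\circ\gamma$ send $(x_1,\ldots,x_n)$ to $(x_1,x_2+kx_1,x_3,\ldots,x_n)$, again contradicting boundedness.

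The main (and essentially only) obstacle is identifying the correct composition $\vartheta$ in the equivariant direction whose iterates have the desired unipotent shear action on the first two coordinates; once this is fixed, the vertex-level verifications of the translation identities and the final contradiction go through exactly as in Lemma~\ref{ZsimpleW}.
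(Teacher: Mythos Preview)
Your proposal is correct and follows exactly the approach the paper intends: the paper simply states that the proof is analogous to that of Lemma~\ref{ZsimpleW}, and you have carried out precisely this adaptation, correctly replacing invariance under $\gamma^{-t}$ by invariance under $\gamma$ and choosing $\vartheta=\psi_{21}\circ\gamma$ so that $\vartheta(x_1,\ldots,x_n)=(x_1,x_1+x_2,x_3,\ldots,x_n)$, which forces $x_1=0$ and then, by the $\psi_{im}$-invariance, $x=o$.
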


\subsection{Lower dimensional polytopes}

In this section, we derive results on the image under a Minkowski valuation of lower dimensional lattice polytopes. The next lemma was proved in \cite{Haberl_sln, Ludwig:Minkowski} for $\sln$ equivariant (and homogeneous) valuations  on $\cpn$.
Let $n\ge 2$.

\begin{lemma}
\label{lowdim-equi}
If $\,\oZ:\lpn\to\ckn$ is an $\,\slnz$ equivariant  and translation invariant Minkowski valuation,  then
$\oZ P$ is contained in the subspace parallel to $\aff P$.
\end{lemma}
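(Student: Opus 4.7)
The plan is to mimic the proof of Lemma~\ref{lowdim-contra}, using the same shears that fix $P$ but now applying $\,\slnz$ equivariance directly rather than through the transpose. The claim is trivial when $\dim(P)=n$, so I will assume $d=\dim(P)\le n-1$. By translation invariance I may assume that $o$ is a vertex of $P$, and then by an $\,\slnz$ change of coordinates (which exists because the primitive sublattice of $\Z^n$ spanned by the vertices of $P$ extends to a basis of $\Z^n$ via the Smith normal form) I may assume that $\aff P=\Span\{e_1,\dots,e_d\}$. The goal is then $\oZ P\subset\Span\{e_1,\dots,e_d\}$.

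Suppose first that $d\ge 1$. For $m\in\{d+1,\dots,n\}$, $k\in\{1,\dots,d\}$ and $j\in\Z$, define $\phi_{jkm}\in\slnz$ by
$$\phi_{jkm}e_i=e_i \text{ for } i\ne m,\qquad \phi_{jkm}e_m=e_m+j\,e_k.$$
This is a shear with $1$'s on the diagonal, hence determinant $1$. Since $\phi_{jkm}$ fixes $\Span\{e_1,\dots,e_d\}$ pointwise, $\phi_{jkm}P=P$, so equivariance of $\oZ$ yields $\phi_{jkm}(\oZ P)=\oZ P$. For any $x=\sum_{i=1}^n t_i e_i\in\oZ P$ a direct computation gives
$$\phi_{jkm}\,x=x+j\,t_m\,e_k\in\oZ P,$$
and iterating this shear shows $x+p\,j\,t_m\,e_k\in\oZ P$ for every $p\ge 1$. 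Boundedness of $\oZ P$ forces $t_m=0$; letting $m$ vary over $\{d+1,\dots,n\}$ yields $\oZ P\subset\Span\{e_1,\dots,e_d\}$, as required.

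When $d=0$, translation invariance reduces the claim to showing $\oZ\{o\}=\{o\}$. Since every $\phi\in\slnz$ fixes $\{o\}$, the set $\oZ\{o\}$ is an $\,\slnz$-invariant convex body; the same iterated-shear argument, available because $n\ge 2$, rules out any nonzero point in $\oZ\{o\}$.

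I do not expect a serious obstacle here: the shear argument is essentially identical to the contravariant case of Lemma~\ref{lowdim-contra}, only simpler because equivariance lets me act on $\oZ P$ by $\phi_{jkm}$ directly instead of by $\phi_{jkm}^{-t}$. The only mildly technical point is the initial coordinate reduction of $\aff P$ into the span of $e_1,\dots,e_d$, which is standard lattice theory.
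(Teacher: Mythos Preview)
Your proof is correct and follows essentially the same approach as the paper's: reduce to $\aff P=\Span\{e_1,\dots,e_d\}$, apply shears in $\slnz$ that fix this subspace pointwise, and use boundedness of $\oZ P$ to kill the remaining coordinates. The paper uses only the shears with $k=1$ (in your notation), whereas you allow any $k\in\{1,\dots,d\}$, and you treat the case $d=0$ separately and more explicitly; these are cosmetic differences, not substantive ones.
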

\begin{proof} By translation invariance and $\slnz$ equivariance, we may assume that $\Span P=\Span\{e_1,\dots,e_d\}$, where $d=\dim (P)\leq n-1$.

For $j\in\Z$ and $k\in\{d+1,\dots,n\}$, we define
$\phi_{jk}\in \slnz$ by $\phi_{jk}e_i=e_i$ for $i\neq k$ and  $\phi_{jk}e_k=e_k+je_1$. It follows that $\phi_{jk}P=P$. If $x=\sum_{i=1}^n t_i e_i\in \oZ P$, then
\begin{equation}
\label{skA}
\phi_{jk}x=(t_1+jt_k)e_1+\sum_{i=2}^{n}t_ie_i.
\end{equation}
Since $\phi_{jk}\oZ P=\oZ P$, the convex body $\oZ P$ is bounded, and $k\in\{d+1,\dots,n\}$ and $j\in\Z$ are arbitrary in (\ref{skA}), we conclude that $t_{d+1}=\dots=t_{n}=0$.
\end{proof}

\subsection{The cube }
\label{seccube-equi}

Proposition~\ref{cube-equi} is the main result of this section. 
Let $n\geq 2$.

\begin{prop}
\label{cube-equi}
If $\,\oZ: \lpn \to \ckn$ is an $\,\slnz$ equivariant and translation invariant  Minkowski valuation,  then
there exists $c\geq 0$ such that 
\begin{equation*}\label{mcube}
\oZ [0,1]^m=[- c, c]^m
\end{equation*}
for $m=0,\dots,n$.
\end{prop}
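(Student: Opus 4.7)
The plan is to prove the statement by induction on $m$. For $m=0$ both sides equal $\{o\}$. For $m=1$, Lemma~\ref{lowdim-equi} places $\oZ[o,e_1]\subset\Span\{e_1\}$; the element $\phi\in\slnz$ defined by $\phi e_1=-e_1$, $\phi e_2=-e_2$, and $\phi e_i=e_i$ for $i\geq 3$ has determinant $1$ (using $n\geq 2$) and sends $[o,e_1]$ to a translate of itself, so translation invariance combined with $\slnz$ equivariance forces $-\oZ[o,e_1]=\oZ[o,e_1]$. Hence $\oZ[o,e_1]=[-c,c]\,e_1$ for some $c\geq 0$; I fix this $c$ once and for all.

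For the inductive step with $m\geq 2$, I assume $\oZ[0,1]^j=[-c,c]^j$ inside $\Span\{e_1,\dots,e_j\}$ for every $j<m$. I cell-decompose $k[0,1]^m$ into $k^m$ unit $m$-cubes; for each direction set $I\subset\{1,\dots,m\}$ of size $d$, exactly $k^d(k-1)^{m-d}$ of the $d$-faces of the decomposition with free directions $I$ meet the relative interior of $k[0,1]^m$. Since Minkowski sums do not admit signed cancellation, I apply Corollary~\ref{incl-ex} in ambient dimension $m$ to the scalar valuation $P\mapsto h(\oZ P,v)$ for arbitrary $v\in\R^n$. Translation invariance, the induction hypothesis, and coordinate-permutation symmetry available within $\slnz$ give $h(\oZ F,v)=c\sum_{i\in I}|v\cdot e_i|$ on every $d$-face $F$ with free directions $I$. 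This produces the polynomial identity in $k$
\begin{equation*}
h(\oZ(k[0,1]^m),v) = k^m\,h(\oZ[0,1]^m,v) + c\sum_{i=1}^m|v\cdot e_i|\,\sum_{d=1}^{m-1}(-1)^{m-d}\binom{m-1}{d-1}k^d(k-1)^{m-d}.
\end{equation*}

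To close the induction I apply Lemma~\ref{Minkowski-poly} in ambient dimension $m$ to the restriction of $\oZ$ to lattice polytopes in $\Span\{e_1,\dots,e_m\}$ (the restriction takes values in this subspace by Lemma~\ref{lowdim-equi}), obtaining an $m$-homogeneous, translation invariant, $\operatorname{SL}_m(\Z)$ equivariant Minkowski valuation. By Theorem~\ref{trans-poly} this valuation annihilates polytopes of dimension less than $m$, so it is simple, and Lemma~\ref{ZsimpleWco} applied in ambient $\R^m$ yields that it vanishes on $[0,1]^m$. Consequently the coefficient of $k^m$ on the left-hand side of the displayed identity is zero. On the right-hand side, the coefficient of $k^m$ simplifies by the combinatorial identity $\sum_{d=1}^{m-1}(-1)^{m-d}\binom{m-1}{d-1}=-1$, and matching coefficients gives $h(\oZ[0,1]^m,v)=c\sum_{i=1}^m|v\cdot e_i|=h([-c,c]^m,v)$. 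Combined with $\oZ[0,1]^m\subset\Span\{e_1,\dots,e_m\}$, this forces $\oZ[0,1]^m=[-c,c]^m$. The main obstacle is the careful bookkeeping of the support-function inclusion-exclusion on the cell decomposition: the precise shape $c\sum_{i\in I}|v\cdot e_i|$ of all lower-dimensional contributions, supplied by induction, is what allows the binomial identity to collapse every intermediate term and pin down the leading coefficient exactly.
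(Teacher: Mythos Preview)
Your proof is correct and follows essentially the same core strategy as the paper: decompose $k[0,1]^m$ into unit cubes, apply inclusion--exclusion (the identity you derive is exactly the content of Lemma~\ref{cube-equi-expand}), and then kill the top-degree coefficient by combining Lemma~\ref{Minkowski-poly} with Lemma~\ref{ZsimpleWco}. The one real difference is the base case. The paper starts its induction at $n=2$ and invokes Proposition~\ref{equi2}, i.e.\ the full planar equivariant classification (which in turn passes through the contravariant planar case via Lemma~\ref{planar-contra-equi}). You instead handle $m=0,1$ directly with Lemma~\ref{lowdim-equi} and a single explicit map in $\slnz$, and then run the inductive machinery already from $m=2$ using Lemma~\ref{ZsimpleWco} in $\R^2$. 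Since Lemma~\ref{ZsimpleWco} does not depend on Proposition~\ref{equi2}, your argument is logically more self-contained: it proves Proposition~\ref{cube-equi} without appealing to the planar classification theorem. The paper's route is shorter to write because Proposition~\ref{equi2} is available anyway, but your route shows that the cube result really only needs the simple-valuation lemma and nothing finer.
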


The critical step to prove Proposition~\ref{cube-equi} is the following
statement where $[a,b]^0=\{o\}$ for $a\le b$.

\begin{lemma}
\label{cube-equi-expand}
If $\,\oZ: \lpn \to \ckn$ is an \,$\slnz$ equivariant and translation invariant  Min\-kowski valuation  and 
there exists $c\ge 0$ such that
$\oZ [0,1]^m=[- c, c]^m$  for every  $m\leq n-1$, then
$$
\oZ(k[0,1]^{n})+k^n[- c, c]^n=k^n\oZ [0,1]^{n} +k\,[- c, c]^n
$$
for $k\in\N$.
\end{lemma}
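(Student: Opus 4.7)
The plan is to fix $v\in\R^n$ and apply Corollary \ref{incl-ex} to the real-valued translation invariant valuation $P\mapsto h(\oZ P,v)$, using the obvious cell decomposition of $k[0,1]^n$ into $k^n$ unit lattice cubes. Every face of this decomposition is a translate of $[0,1]^I:=\sum_{i\in I}[o,e_i]$ for some $I\subseteq\{1,\dots,n\}$, so the computation reduces to identifying $\oZ([0,1]^I)$ for $|I|<n$ and counting how many such faces meet $\Int k[0,1]^n$.

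The first step is to upgrade the hypothesis $\oZ [0,1]^m=[-c,c]^m$ to the statement that $\oZ ([0,1]^I)=[-c,c]^I:=\sum_{i\in I}[-ce_i,ce_i]$ for every $I$ with $|I|=m\le n-1$. This follows from translation invariance together with $\slnz$ equivariance: for any such $I$ one realises the required coordinate permutation by a signed permutation matrix in $\slnz$ (adjusting one sign to force determinant $+1$ if necessary), and since $[-c,c]^m$ is invariant under signed permutations, the image is a translate of $[-c,c]^I$.

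Next I would count, for fixed $|I|=m$, the number of faces with coordinate set $I$ meeting $\Int k[0,1]^n$: the $m$ ``free'' coordinates each admit $k$ choices of unit interval and the $n-m$ ``fixed'' coordinates must lie in $\{1,\dots,k-1\}$, yielding $k^m(k-1)^{n-m}$ such faces. Summing $h([-c,c]^I,v)=c\sum_{i\in I}|v_i|$ over $I$ with $|I|=m$ collapses to $cH\binom{n-1}{m-1}$, where $H:=\sum_{i=1}^n|v_i|$ satisfies $cH=h([-c,c]^n,v)$, since each index $i$ lies in $\binom{n-1}{m-1}$ such subsets.

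Plugging into Corollary \ref{incl-ex}, with the $m=0$ faces contributing $0$ (any singleton has $\oZ=\{o\}$ by the $m=0$ case of the hypothesis and translation invariance) and the $m=n$ cells contributing $k^n h(\oZ[0,1]^n,v)$, yields
$$h(\oZ(k[0,1]^n),v)=k^n h(\oZ[0,1]^n,v)+cH\sum_{m=1}^{n-1}(-1)^{n-m}k^m(k-1)^{n-m}\binom{n-1}{m-1}.$$
After the substitution $j=m-1$, the inner sum becomes $k$ times the partial sum (missing the $j=n-1$ term) of $\sum_{j=0}^{n-1}\binom{n-1}{j}k^{j}(-(k-1))^{n-1-j}=(k-(k-1))^{n-1}=1$, and so evaluates to $k(1-k^{n-1})=k-k^n$. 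Substituting back and rearranging gives the support-function form of the claimed identity for every $v\in\R^n$. The main obstacle is really just careful combinatorial bookkeeping, together with the verification that signed-permutation elements of $\slnz$ act transitively enough on axis-aligned lattice cubes to transfer the hypothesis on $[0,1]^m$ to every face of the decomposition.
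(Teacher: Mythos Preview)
Your proof is correct and follows essentially the same route as the paper's: both subdivide $k[0,1]^n$ into $k^n$ unit cubes, apply Corollary~\ref{incl-ex} to $h(\oZ\,\cdot\,,v)$, use $\oZ([0,1]^I)=[-c,c]^I$ on each face, and collapse the resulting double sum via $\sum_{|I|=m}[-c,c]^I=\binom{n-1}{m-1}[-c,c]^n$ and the binomial identity. Your version is in fact slightly more explicit than the paper's, which silently uses $\slnz$-equivariance and translation invariance to pass from the hypothesis on $[0,1]^m$ to all $[0,1]^I$; your signed-permutation argument spells this out.
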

\begin{proof}
 For $k\geq 1$, we subdivide $k[0,1]^{n}$ into a cell decomposition with $k^n$ unit cubes. We observe that for $m=1,\dots,n-1$,
$$
\sum_{1\leq i_1<\dots<i_m\leq n}\sum_{j=1}^m[-e_{i_j},e_{i_j}]=\binom{n-1}{m-1}[-1,1]^n,
$$
and for $m=0,\dots,n-1$, there exist $k^m(k-1)^{n-m}$ translates of $[0,1]^m$ that are faces of the cell decomposition intersecting the interior of $k[0,1]^{n}$. Since $\oZ [0,1]^m =[-c,c]^m$ for  $m\leq n-1$, we have $\oZ \{o\}=\{o\}$, and we deduce from Corollary~\ref{incl-ex}  that for $v\in \R^n$
\begin{align*}
h(&\oZ(k[0,1]^{n}),v)=\\
&=k^nh (\oZ [0,1]^{n}, v)+\sum_{m=1}^{n-1}(-1)^{n-m}\binom{n-1}{m-1}k^m(k-1)^{n-m} \,h([-c,c]^n,v)\\
&=k^nh(\oZ [0,1]^{n},v)+k\,\sum_{j=0}^{n-2}\binom{n-1}{j}k^j(1-k)^{n-1-j} \, h([-c,c]^n,v)\\
&=k^nh(\oZ [0,1]^{n},v)+k((k+1-k)^{n-1}-k^{n-1})\, h([-c,c]^n,v)\\[6pt]
&=k^nh(\oZ [0,1]^{n},v)-k^nh([{-c},c]^n,v)+k\,h([-c,c]^n,v).
\end{align*}
Thus the lemma is proved.
\end{proof}

\noindent{\bf Proof of Proposition~\ref{cube-equi}. }
We prove the statement by induction on $n\geq 2$. 
The case $n=2$ follows from  Proposition~\ref{equi2} and the fact that $\dst([0,1]^m)$ is the centroid of $[0,1]^m$ by Proposition~\ref{disc-steiner}.

Let $n\geq 3$ and  assume that Proposition~\ref{cube-equi} holds  for  $m\leq n-1$. We consider the $\slnz$ equivariant and translation invariant  Minkowski valuation $\oZ_n$ defined in Lemma~\ref{Minkowski-poly}. Since $\oZ_n$ is homogeneous of degree $n$, we deduce from
Lemma~\ref{cube-equi-expand} applied to $\oZ_n$ that $\oZ_n[0,1]^m=\{o\}$ for $m\le n-1$. Hence $\oZ_n$ is simple and we obtain by Lemma \ref{ZsimpleWco} that 
$$\oZ_n [0,1]^n= \lim_{k\to\infty}\frac{\oZ (k[0,1]^{n})}{k^n}=\{o\}.$$
Next, we apply Lemma~\ref{cube-equi-expand} to $\oZ$. Dividing both sides by $k^n$, and letting $k\to\infty$ shows that $\oZ [0,1]^{n}=[-c,c]^n$. \hfill $\qed$

\subsection{The prism}
\label{secprism}

Let $\oZ:\lpn \to \ckn$ be an $\slnz$ equivariant and translation invariant  Minkowski valuation. Let $n\geq 3$.

\begin{lemma}\label{prismSteiner}
If $S\in\lpm$ is a basic simplex and $k\in\N$, then 
\begin{equation*}
\cen(\oZ(S+[o,ke_n]))=o.
\end{equation*}
\end{lemma}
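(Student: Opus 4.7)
The plan is to reduce to the case $S = T_d$ for some $0 \leq d \leq n-1$, and then to force $\cen(\oZ P) = o$ for $P = T_d + [o, ke_n]$ by a symmetry argument. Every basic simplex in $\lpm$ is the image of some $T_d$ under a map in $\slmz$ (extended to $\slnz$ by fixing $e_n$), followed by an integer translation in $\Z^{n-1}$; such a map also fixes $[o, ke_n]$. Since $\oZ$ is $\slnz$-equivariant and translation invariant, and since $\cen(\phi K + w) = \phi \cen K + w$ for $\phi \in \slnz$ and $w \in \Z^n$, it suffices to treat $S = T_d$.

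For each permutation $\sigma$ of the vertex set $\{o, e_1, \ldots, e_d\}$ of $T_d$, let $\phi_\sigma$ be the unique affine bijection of $\R^n$ that realizes $\sigma$ on these vertices and fixes $e_{d+1}, \ldots, e_n$; then $\phi_\sigma P = P$, and the linear part of $\phi_\sigma$ has determinant $\mathrm{sgn}(\sigma)$. Let $F(x_1, \ldots, x_n) = (x_1, \ldots, x_{n-1}, k - x_n)$, which also satisfies $F P = P$ with linear part of determinant $-1$. Define
\[
G := \{\phi_\sigma : \sigma \text{ even}\} \cup \{F \circ \phi_\sigma : \sigma \text{ odd}\},
\]
supplemented in the case $d = 0$ by the $\slnz$-map $(x_1, x_2, \ldots, x_n) \mapsto (-x_1, x_2, \ldots, x_{n-1}, k - x_n)$. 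Every element of $G$ is an affine transformation with linear part in $\slnz$ and preserves $P$, so by $\slnz$-equivariance and translation invariance of $\oZ$, the linear part of every element of $G$ fixes $\cen(\oZ P)$.

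Decompose $\R^n = V_1 \oplus V_2 \oplus V_3$ with $V_1 = \Span\{e_1, \ldots, e_d\}$, $V_2 = \Span\{e_{d+1}, \ldots, e_{n-1}\}$, and $V_3 = \Span\{e_n\}$. By Lemma \ref{lowdim-equi}, $\cen(\oZ P) \in V_1 \oplus V_3$. On $V_3$, the flip $F$ (or the auxiliary reflection in the $d = 0$ case) acts as $-1$, so the $V_3$-component of $\cen(\oZ P)$ vanishes. On $V_1$, the linear parts of $\phi_\sigma$ for even $\sigma$ and of $F \circ \phi_\sigma$ for odd $\sigma$ (noting that $F$ is trivial on $V_1$) realize the full standard $d$-dimensional representation of $S_{d+1}$; this representation is irreducible over $\R$ and hence has trivial fixed subspace for $d \geq 1$. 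Combining, $\cen(\oZ P) = o$.

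The main obstacle is to supply enough $\slnz$-symmetries in the low-dimensional cases $d = 0$ and $d = 1$, where the alternating subgroup $A_{d+1}$ of $S_{d+1}$ is too small by itself to force the fixed subspace on $V_1$ to be $\{o\}$; there one must combine the even permutations with the orientation-reversing flip $F$ (and, for $d = 0$, with an auxiliary reflection) to obtain both the $V_1$- and $V_3$-constraints.
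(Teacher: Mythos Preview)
Your overall strategy is sound and close to the paper's, but there is a concrete error in the construction of $\phi_\sigma$. As defined (the affine map realizing $\sigma$ on $\{o,e_1,\dots,e_d\}$ and fixing the \emph{points} $e_{d+1},\dots,e_n$), its linear part sends $e_n$ to $e_n-\sigma(o)$. Whenever $\sigma(o)\neq o$ the map therefore shears the prism and $\phi_\sigma P\neq P$: for instance with $d=1$ and $\sigma$ the transposition of $o$ and $e_1$, one computes $\phi_\sigma(ke_n)=ke_n-(k-1)e_1\notin P$ for $k\geq 2$. The fix is to require instead that the \emph{linear part} of $\phi_\sigma$ fix $e_{d+1},\dots,e_n$: take $\phi_\sigma(x)=A_\sigma x+\sigma(o)$ with $A_\sigma$ block-diagonal, equal on $V_1$ to the linear part of the affine symmetry of $T_d$ and equal to the identity on $V_2\oplus V_3$. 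Then $\phi_\sigma$ still permutes the vertices of $T_d$, still has integer linear part of determinant $\mathrm{sgn}(\sigma)$, and now genuinely satisfies $\phi_\sigma P=P$. One further imprecision: $F$ itself is not in $G$ (its linear part has determinant $-1$), so to kill the $V_3$-component you must invoke $F\circ\phi_\sigma$ for some odd $\sigma$ (available for $d\geq 1$), not $F$ alone. With these corrections your argument goes through.

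The paper uses the same underlying symmetries but more ad hoc: the single map with linear part $\mathrm{diag}(-1,1,\dots,1,-1)$ handles $m\in\{0,1\}$, while for $m\geq 2$ the even simplex symmetries (via conjugation to a regular simplex) kill the $V_1$-component and a separate map swapping $e_1\leftrightarrow e_2$ and negating $e_n$ kills the $V_3$-component. Your corrected approach packages this uniformly through the standard $S_{d+1}$-representation on $V_1$.
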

\begin{proof}
We may assume that $S=T_m$ for some $m=0,\dots,n-1$. 
If $m=0,1$, then $\phi(T_m+[0,ke_n])$ is a translate of $T_m+[0,ke_n]$ where $\phi\in\slnz$ is defined by
$\phi e_1=-e_1$, $\phi e_n=-e_n$ and $\phi e_j=e_j$ for $j=2,\dots,n-1$. Since we have
$\oZ(T_m+[0,ke_n])\subset\Span\{e_1,e_n\}$ by Lemma~\ref{lowdim-equi}, we deduce the statement of the lemma for $m=0,1$.

If $m\geq 2$, then $\oZ(T_m+[0,ke_n])\subset\Span\{e_1,\dots,e_m,e_n\}$ by Lemma~\ref{lowdim-equi}. Hence we may assume that $m=n-1$. Let $\alpha'\in\gln$ be the transformation that leaves $e_n$ fixed and acts on $\R^{n-1}$ as $\alpha$ defined in Section~\ref{secn-2}.
Then $ \alpha'\oZ(T_{n-1}+[o,ke_n])$ is invariant under the maps $\rho'$ that leave $e_n$ fixed and are orientation preserving isometries of the regular simplex $[v_0,\dots, v_{n-1}]\subset \R^{n-1}$ defined in Section~\ref{secn-2}. Thus the  first $(n-1)$ coordinates of the centroid of $\alpha'\oZ(T_{n-1}+[o,ke_n])$ vanish.
In addition, $\oZ(T_{n-1}+[o,ke_n])$ is invariant under the map $\psi\in\slnz$ defined 
by $\psi e_n=-e_n$, $\psi e_1=e_2$, $\psi e_2=e_1$, and $\psi e_j=e_j$ for $2<j<n$. This completes the proof of the lemma.
\end{proof}

\goodbreak

Recall that $\cyl=T_{n-1}+[0,e_n]$.

\begin{lemma}
\label{equitildeTn-1}
Assume that Theorem \ref{equi} holds true in dimension $(n-1)$ and hence  that
there exist $a,b\geq 0$ such that $\oZ P =\oZ_{a,b} P$ 
for every lower dimensional $P\in \lpn$. Then
$$\oZ \cyl=\oZ_{a,b} \cyl.$$
\end{lemma}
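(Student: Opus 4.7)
The plan is to decompose $\oZ$ into its homogeneous components, show that only the degree-one component survives, and then exploit the Minkowski sum structure of $\cyl=T_{n-1}+[o,e_n]$ together with the hypothesis on lower dimensional polytopes.

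First, I would apply Theorem~\ref{trans-poly} to the translation invariant real-valued valuation $P\mapsto h(\oZ P,v)$ and iterate the construction in Lemma~\ref{Minkowski-poly} to write
$h(\oZ(kP),v)=\sum_{j=0}^n k^j\,h(\oZ_j P,v)$,
where each $\oZ_j$ is an $\slnz$-equivariant, translation invariant Minkowski valuation that is homogeneous of degree $j$. Since $\dst$ is homogeneous of degree $1$ (Theorem~\ref{moment-poly}), so is $\oZ_{a,b}$; matching the $k^j$-coefficients in the hypothesis $\oZ P=\oZ_{a,b}P$ for lower dimensional $P$ gives $\oZ_1 P=\oZ_{a,b}P$ and $\oZ_j P=\{o\}$ for every $j\ne 1$ and every lower dimensional $P\in\lpn$.

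Second, I would show $\oZ_j\equiv\{o\}$ whenever $j\ne 1$. The operator $\oZ_0=\oZ\{o\}$ is a constant $\slnz$-invariant convex body, hence $\{o\}$. For $j\ge 2$, Step~1 makes $\oZ_j$ simple, so Lemma~\ref{ZsimpleWco} yields $\oZ_j[0,1]^n=\{o\}$. Applying Corollary~\ref{incl-ex} to the triangulation of $[0,1]^n$ from Lemma~\ref{cube-triang} into basic simplices $\phi_i T_n+t_i$, and using that $\oZ_j$ vanishes on all lower dimensional faces, leaves
$$0=h(\oZ_j[0,1]^n,v)=\sum_{i=1}^{n!} h(\phi_i\oZ_j T_n,v).$$
Since $o\in\oZ_j T_n$ by Lemma~\ref{dimTco}, each summand is non-negative and therefore vanishes for all $v$, forcing $\oZ_j T_n=\{o\}$. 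Corollary~\ref{BetkeKneserinvariance} then gives $\oZ_j\equiv\{o\}$. Consequently $\oZ=\oZ_1$ is homogeneous of degree $1$.

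Third, I would establish Minkowski additivity of $\oZ$ on $\cyl$. Dissect $k\,\cyl=kT_{n-1}+[o,ke_n]$ into the $k$ translates $kT_{n-1}+je_n+[o,e_n]$ for $j=0,\dots,k-1$, whose consecutive intersections are translates of $kT_{n-1}$. The valuation property and the degree-one homogeneity $\oZ(kP)=k\,\oZ P$ yield, for every $v\in\R^n$,
$$h(\oZ(kT_{n-1}+[o,e_n]),v)=h(\oZ\cyl,v)+(k-1)\,h(\oZ T_{n-1},v)\qquad(k\ge 1).$$
By McMullen's multivariate version of Theorem~\ref{trans-poly}, the left side is a polynomial in $k$, so this linear identity extends to $k=0$, where it reads $h(\oZ[o,e_n],v)=h(\oZ\cyl,v)-h(\oZ T_{n-1},v)$; rearranging gives $\oZ\cyl=\oZ T_{n-1}+\oZ[o,e_n]$, and the same reasoning applied to $\oZ_{a,b}$ gives $\oZ_{a,b}\cyl=\oZ_{a,b}T_{n-1}+\oZ_{a,b}[o,e_n]$. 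Since $T_{n-1}$ and $[o,e_n]$ are lower dimensional, the hypothesis yields the two right-hand summands equal term by term, so $\oZ\cyl=\oZ_{a,b}\cyl$.

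The main obstacle is the second step: pushing $\oZ_j\equiv\{o\}$ for $j\ge 2$ requires combining the simplicity of $\oZ_j$ (inherited from the hypothesis), Lemma~\ref{ZsimpleWco} (vanishing on the cube), the cube triangulation of Lemma~\ref{cube-triang}, and the positivity of support functions at $o$ provided by Lemma~\ref{dimTco}; without all four ingredients the sum $\sum_i h(\phi_i\oZ_jT_n,v)=0$ does not force individual vanishing.
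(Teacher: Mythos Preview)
Your Step~2 contains a genuine gap. You assert that ``iterating'' Lemma~\ref{Minkowski-poly} produces homogeneous components $\oZ_j$ that are themselves Minkowski valuations, i.e.\ that each coefficient function $c_j(P,\cdot)$ in $h(\oZ(kP),v)=\sum_j c_j(P,v)k^j$ is the support function of a convex body. Lemma~\ref{Minkowski-poly} establishes this only for the top degree $j=n$, as a limit of support functions; for $j<n$ the coefficient is an alternating finite difference of the values $h(\oZ(kP),v)$, and a difference of support functions need not be sublinear. Without $\oZ_j$ landing in $\ckn$ you cannot invoke Lemma~\ref{ZsimpleWco} or Lemma~\ref{dimTco}, and the positivity step
\[
0=\sum_{i} h(\phi_i\,\oZ_jT_n,v)\ \Longrightarrow\ h(\oZ_jT_n,\cdot)\equiv 0
\]
collapses: from $h(\oZ(kT_n),v)\ge 0$ for all $k\in\N$ one cannot infer $c_j(T_n,v)\ge 0$ term by term (e.g.\ $k^2-k\ge 0$ on $\N$ but has negative linear coefficient). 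The conclusion that $\oZ$ is homogeneous of degree~$1$ is exactly what the rest of Section~5 labours to prove; here you are assuming it.

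For comparison, the paper never decomposes $\oZ$ into homogeneous pieces. It (i) forms the auxiliary $(n-1)$-dimensional valuation $\oZp P=\pi_{e_n}\oZ(P+[o,e_n])$ and applies the inductive hypothesis to obtain constants $a',b'$ with $a'+b'=a+b$; (ii) uses the prism cell decomposition of $[0,1]^n$ together with Proposition~\ref{cube-equi} and Lemma~\ref{summand} to force $\oZ\cyl=\oZp T_{n-1}+c_0[-e_n,e_n]$ and to identify $c_0=c$; (iii) runs inclusion--exclusion on $T_{n-1}+[o,ke_n]$ and uses $h(\oZ(\,\cdot\,),v)\ge 0$ (via Lemma~\ref{prismSteiner}) to force a linear-in-$k$ expression to stay non-negative, which pins down $a'=a$, $b'=b$. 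Your Step~3 is close in spirit to part~(iii) and is itself sound, but it only becomes available after degree-one homogeneity is known, which you have not legitimately obtained.
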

\begin{proof} We define  the convex body $\oZp P\subset\R^{n-1}$ for  $P\in\lpm$ by
\begin{equation*}
\oZp P=\pi_{e_n}\oZ(P+[o,e_n]).
\end{equation*}
Then $\oZp: \lpm \to \ckm$  is an $\slmz$ equivariant and translation invariant  Minkowski valuation.
Since Theorem \ref{equi} holds in dimension $(n-1)$,  there exist $a^\prime,b^\prime\geq 0$ such that 
\begin{equation}
\label{Ztilden-1}
\oZp P =\oZ_{a^\prime,b^\prime} P \mbox{ \ for $P\in \lpm$.}
\end{equation}
By Proposition~\ref{cube-equi}, we have
$$[-c,c]^{n-1}=\oZ [0,1]^{n-1} =\oZp [0,1]^{n-1} =\oZ_{a^\prime,b^\prime}[0,1]^{n-1}.$$
Combined with the assumption that $\oZ P =\oZ_{a,b} P$ 
for every lower dimensional $P\in \lpn$  and Proposition~\ref{disc-steiner} this gives
\begin{equation}
\label{alphatildealpha}
a^\prime+b^\prime=2c=a+b.
\end{equation}

For $m=(n-1)!$, we consider the triangulation $S'_1,\dots,S'_m$ of $[0,1]^{n-1}$
into $(n-1)$-dimensional basic simplices with $S'_1=T_{n-1}$ provided by Lemma~\ref{cube-triang}. For
$i=1,\dots,m$, set  $\widetilde{S}'_i=S'_i+[o,e_n]$. Note that the prisms  $\widetilde{S}'_1,\dots,\widetilde{S}'_m$ form a cell decomposition of $[0,1]^{n}$.
Let ${\cal F}$ denote the family of faces of the cell decomposition intersecting the interior of  $[0,1]^{n}$.
It follows from  the inclusion-exclusion principle that
\begin{equation}
\label{incl-exWequi}
\oZ [0,1]^{n}+\sum_{\genfrac{}{}{0pt}{}{F\in{\cal F}}
 {n-\dim (F)\;\text{odd}}}
\oZ F=
\sum_{\genfrac{}{}{0pt}{}{F\in\cal F}
 {n-\dim (F)\;\text{even}}}
\oZ F.
\end{equation}
We relate $\oZ$ to  $\oZ_{a,b}$. Note that
$\oZ_{a,b}$ in place of $\oZ$ also satisfies (\ref{incl-exWequi}). Since $\oZ [0,1]^{n}=\oZ_{a,b} [0,1]^{n}$ by Proposition~\ref{cube-equi}
and (\ref{alphatildealpha}), and $\oZ F=\oZ_{a,b} F $ for lower dimensional lattice polytopes $F\in\lpn$, we deduce that
\begin{equation}
\label{ZZprimeCi}
\sum_{i=1}^m\oZ \widetilde{S}'_i=\sum_{i=1}^m\oZ_{a,b} \widetilde{S}'_i.
\end{equation}
For  $i=1,\dots,m$, we have 
$$
\oZ_{a,b} \widetilde{S}'_i=a(S'_i-\cen(S'_i))+b(-S'_i+\cen(S'_i)) +c\,[-e_n,e_n] 
$$ 
by the Minkowski linearity of $\dst$, Proposition  \ref{disc-steiner} and (\ref{alphatildealpha}). Hence
the right hand side of (\ref{ZZprimeCi}) is of the form $Q+c\,m[-e_n,e_n]$ for a suitable $(n-1)$-dimensional polytope $Q\subset\R^{n-1}$.
It follows from (\ref{ZZprimeCi}) that $\oZ \cyl=\oZ \widetilde{S}_1'$ is an $n$-dimensional polytope that is a summand of  $Q+c\,m[-e_n,e_n]$. Hence, because of (\ref{sum-face}),  the facet outer normals of $\oZ \cyl$ are either parallel or ortho\-gonal to $e_n$.  Hence, by Lemma~\ref{prismSteiner}, there exists $c_0>0$ such that
$$
\oZ \cyl=\oZp T_{n-1}+c_0[- e_n, e_n].
$$
By (\ref{Ztilden-1}) and Proposition \ref{disc-steiner}, we therefore get
$$
\oZ \cyl=a^\prime(T_{n-1}-\cen(T_{n-1}))+b^\prime(-T_{n-1}+\cen(T_{n-1}))
+c_0[- e_n, e_n].
$$
Using the $\slnz$ equivariance  and translation invariance of $\oZ$, we deduce that
$$
\oZ \tilde{S}'_i=a^\prime(S'_i-\cen(S'_i))+b^\prime(-S'_i+\cen(S'_i))
+c_0[-e_n, e_n]
$$
for $i=1,\dots,m$. As $\sum_{i=1}^m\oZ \widetilde{S}'_i=Q+cm[-e_n,e_n]$, we conclude that $c_0=c$. Thus
(\ref{alphatildealpha}) implies
\begin{equation}
\label{ZtildeTalphatilde}
\oZ \cyl=a^\prime(\cyl-\cen(\cyl))+
b^\prime(-\cyl+\cen(\cyl)).
\end{equation}

To prove that $a=a^\prime$ and $b=b^\prime$, we first assume that $b\geq b^\prime$.
Define the vector $v=e_1+\dots+e_{n-1}$
and set $h_0=h({T_{n-1}-\cen(T_{n-1})},v)>0$. Note that
$$h({-T_{n-1}+\cen(T_{n-1})},v)=(n-1)h_0$$
and
$$
h({-\cyl+\cen(\cyl)},v)=(n-1)h_0
\mbox{ \ and \ }h({\cyl-\cen(\cyl)},v)=h_0.
$$
We consider the translation invariant real valued valuation $P \mapsto h({\oZ P},v)$ on $\lpn$ and for
$k\geq 2$, the cell decomposition of $T_{n-1}+[o,ke_n]$ into $k$ translates of $\cyl$.
The cell decomposition has $(k-1)$ faces intersecting the interior of $T_{n-1}+[o,ke_n]$, each a translate of $T_{n-1}$.
Since $\cen(\oZ(T_{n-1}+[o,ke_n]))=o$ by Lemma \ref{prismSteiner},
we have $h(\oZ(T_{n-1}+[o,ke_n]),v)\ge 0$. By first
applying the inclusion-exclusion principle (Corollary~\ref{incl-ex}), second that $\oZ T_{n-1}= \oZ_{a,b} T_{n-1}$
and (\ref{ZtildeTalphatilde}), and third that $a^\prime-a=b-b^\prime$, which follows from (\ref{alphatildealpha}), we deduce that
\begin{eqnarray*}
0&\leq &h({\oZ(T_{n-1}+[o,ke_n])},v)\\
&=& k\, h(\oZ \cyl,v)-(k-1)h({\oZ T_{n-1}},v)\\
&= &\big(k a^\prime+k (n-1)b^\prime-(k-1) a -(k-1)(n-1)b \big)h_0\\
&= &\big(a^\prime+(n-1)b^\prime-(k-1)(n-2)(b-b^\prime) \big)h_0.
\end{eqnarray*}
As $b\geq b^\prime$ and the last expression is non-negative for any large $k$, we conclude that $b=b^\prime$. In turn, $a=a^\prime$ follows from (\ref{alphatildealpha}).

If $b\leq b^\prime$,  and hence $a\geq a^\prime$, we use essentially the same argument,
only the valuation $P \mapsto h({\oZ P},-v)$ replaces $P\mapsto h({\oZ P},v)$, and we exchange the role of $a$ and $b$.
\end{proof}

\goodbreak
We deduce from  Lemma~\ref{equitildeTn-1} the following result.

\begin{coro}
\label{equitildeTn-1cor}
Let $\,\oZ$ and the constants $a,b$ be as in  Lemma~\ref{equitildeTn-1}. 
If $S_1,\dots,S_n$ with $S_1=T_n$ are  basic simplices
triangulating $T_{n-1}+[0,e_n]$,
then 
$$
\sum_{i=1}^{n} \oZ_{a,b} S_i=\sum_{i=1}^{n} \oZ S_i.
$$
\end{coro}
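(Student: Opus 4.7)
The plan is to apply the inclusion-exclusion principle of Corollary~\ref{incl-ex} to the cell decomposition $S_1,\dots,S_n$ of $\cyl = T_{n-1}+[0,e_n]$ and exploit the fact that both $\oZ$ and $\oZ_{a,b}$ agree on $\cyl$ (by Lemma~\ref{equitildeTn-1}) and on every lower-dimensional lattice polytope in $\lpn$ (by the hypothesis of Lemma~\ref{equitildeTn-1}). Since Corollary~\ref{incl-ex} is stated for valuations into an abelian group, I first pass to support functions: for each $v\in\R^n$, both $P\mapsto h(\oZ P,v)$ and $P\mapsto h(\oZ_{a,b}P,v)$ are real-valued translation invariant valuations on $\lpn$.

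Let $\cF$ denote the set of faces of the cell decomposition $\{S_1,\dots,S_n\}$ of $\cyl$. Fix $v\in\R^n$. Applying Corollary~\ref{incl-ex} separately to $\oZ$ and $\oZ_{a,b}$ yields
\begin{align*}
h(\oZ \cyl,v) &= \sum_{i=1}^n h(\oZ S_i,v) + \sum_{\genfrac{}{}{0pt}{}{F\in\cF,\,\dim F<n}{F\cap \Int\cyl\neq\emptyset}} (-1)^{n-\dim F} h(\oZ F,v),\\
h(\oZ_{a,b} \cyl,v) &= \sum_{i=1}^n h(\oZ_{a,b} S_i,v) + \sum_{\genfrac{}{}{0pt}{}{F\in\cF,\,\dim F<n}{F\cap \Int\cyl\neq\emptyset}} (-1)^{n-\dim F} h(\oZ_{a,b} F,v),
\end{align*}
where the $n$-dimensional pieces contribute with sign $(-1)^0=+1$.

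Now Lemma~\ref{equitildeTn-1} gives $\oZ\cyl=\oZ_{a,b}\cyl$, so the two left-hand sides coincide. Moreover, every $F\in\cF$ with $\dim F<n$ is a lower-dimensional lattice polytope, and by hypothesis $\oZ F=\oZ_{a,b}F$ for every such $F$. Therefore the correction sums on the right-hand sides are equal, and subtracting the two identities forces
\[
\sum_{i=1}^n h(\oZ S_i,v) = \sum_{i=1}^n h(\oZ_{a,b}S_i,v).
\]
Since this holds for every $v\in\R^n$ and support functions are additive under Minkowski sum, we conclude $\sum_{i=1}^{n}\oZ S_i=\sum_{i=1}^{n}\oZ_{a,b}S_i$, as required. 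There is no genuine obstacle here; the only point requiring care is ensuring that every lower-dimensional face appearing in inclusion-exclusion falls under the hypothesis of Lemma~\ref{equitildeTn-1}, which is automatic since all such faces have dimension strictly less than $n$.
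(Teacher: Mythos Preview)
Your proof is correct and follows exactly the approach the paper intends: the paper merely states that the corollary is deduced from Lemma~\ref{equitildeTn-1}, and the deduction is precisely the inclusion--exclusion argument (via support functions) that you wrote out, matching the pattern used earlier in the proof of Lemma~\ref{equitildeTn-1} to obtain (\ref{ZZprimeCi}). There is nothing to add.
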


\subsection{The faces of $\mathbf{\oZ T_n}$}
\label{secnbigfacets}

Let $n\geq 3$. Let $\oZ:\lpn \to \ckn$ be an $\slnz$ equivariant and translation invariant  Minkowski valuation and 
assume that Theorem~\ref{equi} holds in $\R^{n-1}$. Hence, there exist $a,b\geq 0$ such that 
\begin{equation}
\label{equiTn0}
\oZ P=\oZ_{a,b} P
\end{equation}
for every lower dimensional $P\in \lpn$.  Note that Proposition \ref{cube-equi} implies that $2c=a+b$. 

If $a+b=0$, then $\oZ$ is simple. Hence, Lemma \ref{ZsimpleWco} implies that  $\oZ P=\{o\}$ for $P\in\lpn$. Thus the proof of Theorem \ref{equi} is complete in this case.

\begin{lemma}
\label{equiTn}
If $a+b>0$, then 
$\,\oZ T_n$ is an $n$-dimensional polytope with the property that any of its facet normals is also a facet normal of $\,T_n-T_n$.
\end{lemma}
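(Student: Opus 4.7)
The lemma has two parts, and my plan addresses them separately.

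For the $n$-dimensionality, the plan is to show $\oZ T_n \ne \{o\}$ and then invoke Lemma~\ref{dimTco}. If $\oZ T_n = \{o\}$, then $\slnz$-equivariance and translation invariance force $\oZ S = \{o\}$ for every basic $n$-simplex $S$. Applying Corollary~\ref{equitildeTn-1cor} to the prism triangulation $S_1 = T_n, S_2, \ldots, S_n$ of $\cyl$ then gives
$$\{o\} = \sum_{i=1}^n \oZ S_i = \sum_{i=1}^n \oZ_{a,b} S_i,$$
but because $a+b > 0$ each $\oZ_{a,b} S_i$ is a translate of the $n$-dimensional polytope $a S_i + b(-S_i)$, making the right-hand Minkowski sum $n$-dimensional, a contradiction. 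Hence $\oZ T_n \ne \{o\}$ and Lemma~\ref{dimTco} gives $\dim(\oZ T_n) = n$.

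For the facet normals, the same identity realizes $\oZ T_n = \oZ S_1$ as a Minkowski summand of $R := \sum_{i=1}^n \oZ_{a,b} S_i$, so every facet normal of $\oZ T_n$ is a one-dimensional cone in the normal fan of $R$. Each summand $\oZ_{a,b} S_i$ is a translate of $a S_i + b(-S_i)$, whose normal fan coarsens that of $S_i - S_i$. A direct computation based on the vertex description in~(\ref{Ttildedissect}) shows that each facet of $S_i$ lies in a hyperplane of the form $\sum_{j \in J} x_j = c$ for some non-empty $J \subset \{1, \ldots, n\}$, so that its outward normal has the form $\pm \sum_{j \in J} e_j$. By the description from Section~\ref{secn-2}, these are precisely the facet normals of $T_n - T_n$, so every facet normal of every $\oZ_{a,b} S_i$ is already a facet normal of $T_n - T_n$.

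The main obstacle is that the normal fan of $R$ is a common refinement and, in principle, can acquire new one-dimensional cones as intersections of higher-dimensional cones of the individual $\oZ_{a,b} S_i$ normal fans, beyond the facet normals of any single $S_i - S_i$. To rule out such spurious rays, I would exploit the $\slnz$-symmetries of $\oZ T_n$: the stabilizer $G \subset \slnz$ of $T_n$ modulo $\Z^n$-translations (acting as the alternating group $A_{n+1}$ on the vertices $o, e_1, \ldots, e_n$) fixes $\oZ T_n$, so its facet normals form unions of $G$-orbits. Combined with the containment of every individual summand's rays in the facet normals of $T_n - T_n$ and a careful combinatorial analysis of how the normal fans of the $S_i - S_i$ intersect, this $G$-invariance should force every facet normal of $\oZ T_n$ to coincide with a facet normal of $T_n - T_n$.
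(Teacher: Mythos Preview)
Your argument for $n$-dimensionality via Corollary~\ref{equitildeTn-1cor} and Lemma~\ref{dimTco} is sound; the paper uses the cube triangulation~(\ref{Wtriang-equi0}) for the same purpose, but your prism variant works equally well.

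The facet-normal part, however, has a genuine gap that you yourself flag but do not close. You show that each summand $\oZ_{a,b}S_i$ has facet normals among those of $T_n-T_n$, but the normal fan of the Minkowski sum $R=\sum_i\oZ_{a,b}S_i$ is the common refinement of the summand fans and can, in general, acquire new rays arising as intersections of higher-dimensional cones. Your proposed remedy --- combining $A_{n+1}$-invariance of $\oZ T_n$ with ``a careful combinatorial analysis of how the normal fans of the $S_i-S_i$ intersect'' --- is not carried out, and there is no evident reason why $G$-invariance alone would exclude a spurious $G$-orbit of rays in the refinement; note that $R$ itself is \emph{not} $G$-invariant, so the constraint is only on the summand $\oZ T_n$, not on $R$.

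The paper sidesteps this difficulty entirely. Instead of the $n$-term prism triangulation, it uses the two-piece decomposition $[0,1]^n=T_n\cup R_n$ of~(\ref{TQW}). Combined with Proposition~\ref{cube-equi} and~(\ref{equiTn0}), this exhibits $\oZ T_n$ as a Minkowski summand of
\[
Q=[-c,c]^n+a\,[e_1,\dots,e_n]-b\,[e_1,\dots,e_n],
\]
a body whose facets can be written explicitly as $F=F_0+F_1-F_2$ with $F_0$ a face of the cube and $F_1,F_2$ faces of the simplex $[e_1,\dots,e_n]$. A short combinatorial argument then shows directly that each such $F$ has its affine hull parallel to a facet of $T_n-T_n$. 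The key advantage is that $Q$ is a sum of only two very simple bodies (a coordinate parallelepiped and a generalized difference of a single simplex), so no refinement-of-fans issue arises: the facet structure of $Q$ is transparent. Your approach would require controlling the refinement of $n$ different simplex difference-body fans, which is substantially harder and is not accomplished in your proposal.
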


\begin{proof} 
We use the triangulation $S_1,\dots,S_{n!}$ of $[0,1]^{n}$ into basic simplices
given by Lemma~\ref{cube-triang} with $S_1=T_n$.
Write ${\cal F}'$ for the faces of the cell decomposition that intersect the interior of $[0,1]^{n}$ and have dimension at most $n-1$.
We deduce from the inclusion-exclusion principle that
\begin{equation}
\label{Wtriang-equi}
\oZ [0,1]^{n} +\sum_{\genfrac{}{}{0pt}{}{F\in{\cal F}'}{n- \dim (F)\;\text{odd}}}
\!\!\!\oZ F=\sum_{i=1}^{n!}\, \oZ S_i +
\sum_{\genfrac{}{}{0pt}{}{F\in{\cal F}'}{ n-\dim (F)\;\text{even}}}\!\!\!
\oZ F.
\end{equation}

Note that (\ref{Wtriang-equi}) also holds for $\oZ_{a,b}$ in place of $\oZ$. Here $\oZ_{a,b} F=\oZ F$ for $F\in{\cal F}'$ by (\ref{equiTn0}). Therefore
(\ref{Wtriang-equi}) combined with $a+b=2c$ gives 
\begin{equation}
\label{Wtriang-equi0}
\sum_{i=1}^{n!} \oZ_{a,b} S_i =\sum_{i=1}^{n!} \oZ S_i.
\end{equation}
We deduce from $a+b>0$ that the left hand side of (\ref{Wtriang-equi0}) is $n$-dimensional. 
Since we have $S_i=\phi_i\,T_n$ with $\phi_i\in \slnz$ and $\oZ S_i=\phi_i\oZ T_n$, it follows that $\oZ T_n\neq\{o\}$.
We deduce from Lemma~\ref{dimTco} that $\oZ T_n$ is $n$-dimensional.

\goodbreak
We look at the decomposition of the unit cube $[0,1]^{n}$ into $T_n$ and the remaining part, $R_n$. 
Note that $T_n\cap R_n=[e_1,\dots,e_n]$. Using Proposition~\ref{cube-equi},  (\ref{equiTn0}),  and the valuation property of $\oZ$, we get
\begin{equation}
\label{TQW1}
\oZ T_n \,\,\mbox{ is a summand of }\,\, Q=[-c,c]^n+a\,[e_1,\dots,e_n]-b\,[e_1,\dots,e_n].
\end{equation}
We deduce right away that $\oZ T_n$ is a polytope.

Moreover,  (\ref{sum-face}) and (\ref{TQW1}) imply that any facet normal of $\oZ T_n$ is a facet normal of $Q$. 
Since $T_n-T_n$ is $o$-symmetric,  (\ref{sum-face}) implies that it is now sufficient to show that the affine hull of any facet $F$ of $[-1,1]^n+[e_1,\dots,e_n]-[e_1,\dots,e_n]$ is parallel to a facet of $T_n-T_n$. Now $F=F_0+F_1-F_2$ where $F_0$ is a face of $[-1,1]^n$, and $F_1$, $F_2$ are faces of $[e_1, \ldots, e_n]$. In particular,  
$$d_0+d_1+d_2 \geq n-1$$
where $d_i=\dim(F_i)$. If $d_0=0$, then $\aff F$ is a translate of $\aff [e_1,\dots,e_n]$ and hence $\aff F$ is parallel to a facet of $T_n-T_n$. Therefore we may assume that $d_0>0$, and, without loss of generality, that $\aff F_0$ is a translate of   $\Span\{e_1, \ldots, e_{d_0}\}$. 
Let $V_i\subset\{e_1,\dots,e_n\}$ be the set of vertices of $F_i$ for $i=1,2$. Since 
$$d_0+\card(V_1)+\card(V_2)= d_0+d_1+d_2+2\geq n+1,$$ 
we have $\{e_1, \ldots, e_{d_0}\}\cap(V_1\cup V_2)\neq\emptyset$, where $\card$ stands for cardinality. Hence we may assume that $e_1\in V_1$ and 
$\{e_1, \ldots, e_{d_0}\}\cup V_1=\{e_1,\dots,e_m\}$, where $m\leq d_0+d_1$. It follows from $e_1\in V_1$ that $\aff F_0+\aff F_1$ is a translate of 
$$\Span\{e_i:i=1,\dots,d_0\}+\Span\{e_i-e_1:e_i\in V_1\}=\Span\{e_i:i=1,\dots,m\}.$$
Therefore $\aff F$ is a translate of  the affine hull of the facet $[o, e_1,\dots,e_m]-F_2$ of $T_n-T_n$.
\end{proof}

\subsection{More on $\oZ T_n$}

As in Section \ref{secn-2}, set $T=T_n$ and $\alpha \bar T= T_*$. Define $Q_*=\alpha \oZ T$.
It follows from Lemma~\ref{equiTn} that
$Q_*$ is a polytope and that any of its facet normals  is
 a positive multiple of $\sum_{i\in I}v_i$ for a proper subset $I$ of $\{0,\dots,n\}$. Since $\oZ$ is $\slnz$ equivariant and translation invariant, $Q_*$ is invariant under maps from $\sym$. In particular,  the orbits of the facet normals of $Q_*$ of this action are characterized by the cardinality of $I$.

\begin{lemma}
\label{simplex-face}
For $m=1,\dots,n-2$, there exist constants $a_m,b_m\geq 0$ and $s_m\ge 0$  such that 
$$F(Q_*, w_m)=a_m F(T_*,w_m) +b_m F(-T_*,w_m)+s_m w_m.$$
\end{lemma}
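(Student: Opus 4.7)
My plan is to exploit the fact, from (\ref{TQW1}), that $\oZ T_n$ is a Minkowski summand of $[-c,c]^n+a[e_1,\ldots,e_n]-b[e_1,\ldots,e_n]$. Writing $A$ for the $\alpha$-image of this polytope, $Q_*=\alpha\oZ T_n$ is a summand of $A$, and by (\ref{sum-face}), $F(Q_*,w_m)$ is a summand of $F(A,w_m)$. I will compute $F(A,w_m)$ explicitly, observe that it has a Cartesian-product structure compatible with a natural orthogonal splitting of the supporting hyperplane $w_m^\perp$, and extract from this the claimed normal form.

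The relevant decomposition is $w_m^\perp=E_1\oplus E_2$, where $E_1=\Span\{v_i-v_j:0\le i,j\le m\}$ has dimension $m$ and is parallel to $\aff F(T_*,w_m)$, and $E_2=\Span\{v_i-v_j:m+1\le i,j\le n\}$ has dimension $n-m-1$ and is parallel to $\aff F(-T_*,w_m)$; orthogonality of $E_1,E_2$ and of each to $w_m$ is immediate from (\ref{vpq0}). Using the zonotope expansion $\alpha[-1,1]^n=\sum_{i=1}^n[-(v_i-v_0),v_i-v_0]$ and the simplex expressions $\pm\alpha[e_1,\ldots,e_n]=\pm[v_1-v_0,\ldots,v_n-v_0]$, and checking on which segments and vertices $w_m$ attains its maximum, a direct computation gives
\[
F(A,w_m)=M_1+M_2+tw_m
\]
modulo translation, where $M_1\subset E_1$ is the Minkowski sum of the zonotopic contributions in $E_1$ with the scaled $(m-1)$-simplex $a[v_1-v_0,\ldots,v_m-v_0]$, while $M_2\subset E_2$ (up to translation in $E_2$) equals $b\,F(-T_*,w_m)$.

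Applying Lemma~\ref{summand} to $F(A,w_m)=M_2+(M_1+tw_m)$: since $M_2$ is a simplex and $\dim M_2+\dim(M_1+tw_m)=(n-m-1)+m=n-1=\dim F(A,w_m)$, the lemma yields $F(Q_*,w_m)=b_m F(-T_*,w_m)+R'+s_m w_m$ for some $b_m\in[0,b]$ and $R'$ in the affine span of $M_1$. The standard direct-sum splitting of Minkowski summands (Schneider, Section~3.2) then implies that $R'$ is a summand of $M_1$ inside $E_1$. Combining the invariance of $R'$ under the stabilizer of $w_m$ in $\sym$ (which, since $n-m\ge 2$, acts on $E_1$ via the full symmetric group on $\{v_0,\ldots,v_m\}$) with the positive orientation of the simplex summand $a[v_1-v_0,\ldots,v_m-v_0]$ inside $M_1$ should pin down $R'=a_m F(T_*,w_m)+$ translate for some $a_m\ge 0$. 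Collecting translates onto the axis $\R w_m$, the unique fixed line of the stabilizer, yields the claimed identity, and $s_m\ge 0$ follows from the nonnegativity of $h(Q_*,w_m)$ guaranteed by Lemma~\ref{dimTco}.

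The main obstacle I anticipate is the last step, isolating $R'$ as a scaled translate of the \emph{positive} simplex $F(T_*,w_m)$ rather than of its negative or of a hybrid body. Indeed, the stabilizer of $w_m$ alone admits both orientations of the regular $m$-simplex as invariant bodies in $E_1$, and generic summand arguments applied to $M_1$ will not distinguish between them. The argument must invoke the specific orientation of the simplex sitting inside $M_1$, inherited from the $+a$ term in (\ref{TQW1}) (in contrast to the $-b$ term, which is what produced the negative orientation in $M_2$); equivalently, one must use that the facet normals of any summand of $M_1$ in $E_1$ are, by Lemma~\ref{equiTn}, positive multiples of sums $\sum_{i\in I}v_i$ with $I\subsetneq\{0,\ldots,m\}$, which combined with $\sym$-symmetry should leave only the scaled positive simplex as an admissible shape.
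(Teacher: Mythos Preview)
Your setup coincides with the paper's: both start from (\ref{TQW1})/(\ref{TQWn3}), compute $F(A,w_m)$, and apply Lemma~\ref{summand} to peel off the $E_2$-simplex factor $b_mF(-T_*,w_m)$. You also correctly locate the crux: determining $R'$ inside $E_1$. However, the two fixes you sketch do not close the gap, and the paper uses a different idea.

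First, the stabiliser argument alone cannot work, as you note: both $F(T_*,w_m)$ and $-F(T_*,w_m)$ are invariant under the full symmetric group on $\{v_0,\dots,v_m\}$ acting on $E_1$. Your fallback---that Lemma~\ref{equiTn} forces the facet normals of $R'$ in $E_1$ to be positive multiples of $\sum_{i\in I}v_i$ with $I\subsetneq\{0,\dots,m\}$---is not justified. Lemma~\ref{equiTn} constrains the facet normals of $Q_*$ in $\R^n$, not those of a factor $R'$ inside the lower-dimensional slice $E_1$; passing from one to the other would require analysing how $(n-2)$-faces of $Q_*$ sit inside $F(Q_*,w_m)$, which you have not done and which is not obviously tractable. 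The paper instead repeats the whole construction for the face $F(Q_*,-w_k)$ with $k=n-m-1$: there the roles of $E_1$ and $E_2$ are exchanged, so Lemma~\ref{summand} now peels off a simplex factor $a'_kF(T_*,-w_k)$ from the $E_1$-side. Applying a suitable $\rho\in\sym$ with $\rho(-w_k)=w_m$ transports this to a second expression $F(Q_*,w_m)=a'_{n-m-1}F(T_*,w_m)+\rho L'_{n-m-1}$, and comparing the two expressions via Lemma~\ref{summand} pins down $R'$ as a translate of $a_mF(T_*,w_m)$. This ``look at the opposite face and rotate'' step is the missing idea.

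Second, your argument for $s_m\ge 0$ is wrong. Lemma~\ref{dimTco} gives $h(Q_*,w_m)>0$, but $h(Q_*,w_m)=a_m\,w_m\!\cdot\!v_0-b_m\,w_m\!\cdot\!v_{m+1}+s_m|w_m|^2$, and the first two terms are already nonnegative by (\ref{vpq0}); so $h(Q_*,w_m)>0$ says nothing about the sign of $s_m$. The paper obtains $s_m\ge 0$ from a separate argument based on (\ref{Xist}) once the form $a_mF(T_*,w_m)+b_mF(-T_*,w_m)+s_mw_m$ is established.
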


\begin{proof} 
By Proposition~\ref{cube-equi} and (\ref{equiTn0}), the decomposition (\ref{TQW}) implies that\begin{equation}
\label{TQWn3}
Q_*\,\mbox{ is a summand of }\,(a+b) \sum_{i=1}^n [o, v_i-v_0]+a\,[v_1,\dots,v_n]- b\,[v_1,\dots,v_n].
\end{equation}
Let $m \in \{1,\dots,n-2\}$.
Since 
$$F(\sum_{i=1}^n [o, v_i-v_0],w_m) = \sum_{i=1}^n F( [o, v_i-v_0],w_m) = \sum_{i=1}^m [o,v_i-v_0],$$
and
$$F( \sum_{i=1}^n [o, v_i-v_0],-w_m) =  \sum_{i=1}^n F( [o, v_i-v_0],-w_m) = \sum_{i=m+1}^n [o,v_i-v_0],$$
we deduce 
from  (\ref{facets2}) and (\ref{TQWn3}) that
$
F(Q_*,w_m)$ is a summand of 
$$(a+b)\sum_{i=1}^m [o,v_i-v_0]+ a[v_1,\dots,v_m]-b\, [v_{m+1},\dots,v_n]$$
and
that $F(Q_*,-w_m)$ is a summand of 
$$(a+b)\sum_{i=m+1}^n [o,v_i-v_0]+ a[v_{m+1},\dots,v_n]-b\, [v_{1},\dots,v_m].$$
Lemma~\ref{summand} combined with (\ref{facets1}) implies that 
\begin{equation}
\label{F"betam2}
F(Q_*,w_m) = L_m-b_m F(T_*, -w_m)
\end{equation}
where $0\le b_m\le b$ and $L_m$  is a convex polytope  contained in a translate of  $\aff(v_0, \dots, v_m)$ and that
\begin{equation}
\label{F"betam3}
F(Q_*,-w_k) = a'_k F(T_*, -w_k) + L'_k
\end{equation}
where $0\le a'_k\le a$ and $L'_k$  is a convex polytope  contained in a translate of  $\aff(v_0, \dots, v_k)$.
For $\rho \in \sym$ suitable and $k=n-m-1$, we have $\rho Q_* = Q_*$ and $\rho (-w_k) = w_m$. Hence
(\ref{F"betam3}) implies
\begin{equation}
\label{F"betam4}
F(Q_*, w_m) = a'_{n-m-1} F(T_*, w_m) + \rho L'_{n-m-1}\end{equation}
where $\rho L'_{n-m-1}$  is contained in a translate of $\aff(v_{m+1}, \dots, v_n)$. Combining (\ref{F"betam2}) and (\ref{F"betam4}) with Lemma~\ref{summand} shows that
$F(Q_*, w_m)$ is a translate of  $a_m F(T_*,w_m) +b_m F(-T_*,w_m)$ with $a_m = a_{n-m-1}'$.

Thus there  are $c_0, \dots, c_n\in\R$  with $\sum_{i=0}^n c_i=1$ such that
\begin{equation}\label{fqe}
F(Q_*,w_m)= a_m [v_0, \dots, v_m]-b_m [v_{m+1}, \dots, v_n]+\sum_{i=0}^n c_i\,v_i.
\end{equation}
If $\rho\in\sym$ corresponds to an even permutation of $(v_0, \dots, v_m)$ and $(v_{m+1}, \dots, v_n)$, then
$\rho Q_*=Q_*$ and $\rho w_m=w_m$. Hence (\ref{fqe}) implies that
$$\sum_{i=0}^n c_i \rho \,v_i = \sum_{i=0}^n c_i\,v_i.$$
This implies that $c_0=\dots=c_m$ and $c_{m+1}=\dots=c_n$.
Thus $\sum_{i=0}^n c_i\,v_i=
(c_0-c_n)\, w_m$ and
 $$F(Q_*,w_m)= a_m F(T_*,w_m)-b_m F(T_*,-w_m)+s_m w_m$$
with $s_m=c_0-c_n$.
If $a_m, b_m>0$, then
 $a_m F(T_*,w_m)-b_m F(T_*,-w_m) $ is a facet of $a_m T_* - b_m T_*$. Thus (\ref{Xist}) implies that $s_m\geq 0$. If $a_m=0$ or $b_m=0$, then also $s_m\ge 0$.
\end{proof}

For $m=0,\dots,n-1$, we set $G_m= F\left(Q_*,w_m\right)$. Then,
for $m=1,\dots,n-2$, we have
\begin{equation*}
G_m=a_m [v_0,\dots,v_m]-b_m [v_{m+1},\dots,v_n]+s_m w_m.
\end{equation*}
If $a_m,b_m>0$, then $F(Q_*,w_m)$ is a facet of $Q_*$. Using (\ref{vpq0}), we obtain for its $(n-2)$-faces, 
\begin{eqnarray*}
F(G_m,w_{m-1})
&=&a_m [v_0,\dots,v_{m-1}]-b_m [v_{m+1},\dots,v_n]+s_m w_m,\\
F(G_m,w_{m+1})&=&
a_m [v_0,\dots,v_m]-b_m [v_{m+2},\dots,v_n]+s_m w_m.
\end{eqnarray*}
We need the following result.

\begin{lemma}
\label{mm+1m-1}
If $a_m,b_m>0$ for $m\in\{1,\dots,n-2\}$, then $\,G_{m-1}$ and
$\,G_{m+1}$ are facets of \,$Q_*$ and
\begin{eqnarray*}
F(G_m,w_{m-1})&=&F(Q_*,w_m)\cap F(Q_*,w_{m-1}),\\
F(G_m,w_{m+1})&=&F(Q_*,w_m)\cap F(Q_*,w_{m+1}).
\end{eqnarray*}
\end{lemma}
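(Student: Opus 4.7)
Given $a_m, b_m > 0$, Lemma~\ref{simplex-face} represents $G_m$ as a Minkowski sum whose two simplex summands have affine-hull directions $\Span\{v_i - v_0 : 1 \leq i \leq m\}$ and $\Span\{v_j - v_{m+1} : m+2 \leq j \leq n\}$; these are linearly independent, so $\dim G_m = n - 1$ and $G_m$ is a facet of $Q_*$. The explicit formulas for $F(G_m, w_{m\mp 1})$ displayed in the excerpt each have dimension $(m-1)+(n-m-1) = n - 2$ by the same kind of check, so $R_{\mp} := F(G_m, w_{m \mp 1})$ are ridges of $Q_*$.

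I would then invoke the standard fact that every ridge of a convex $n$-polytope lies in exactly two of its facets, and write $R_{\mp} = G_m \cap G'_{\mp}$ for a unique second facet $G'_{\mp}$ of $Q_*$, whose outer normal $u'_{\mp}$ is, by Lemma~\ref{equiTn}, a positive multiple of $\sum_{i \in I_{\mp}} v_i$ for some proper $I_{\mp} \subsetneq \{0, \ldots, n\}$. The $2$-dimensional normal cone of $R_{-}$ is generated by $w_m$ and $u'_{-}$; since $R_{-} = F(Q_*, w_m + \varepsilon w_{m-1})$ for all small $\varepsilon > 0$, the vector $w_m + \varepsilon w_{m-1}$ lies in its relative interior. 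Expanding this inclusion in the spanning set $v_0, \ldots, v_n$ modulo $\sum v_i = o$ and splitting $I_{-}$ by its intersections with the three blocks $\{0, \ldots, m-1\}$, $\{m\}$, $\{m+1, \ldots, n\}$, one sees that only two proper subsets are admissible: $I_{-} = \{0, \ldots, m-1\}$ (giving $u'_{-} \propto w_{m-1}$) and $I_{-} = \{0, \ldots, n\} \setminus \{m\}$ (giving $u'_{-} \propto -v_m$ via $\sum v_i = o$). The analogous analysis for $R_{+}$ yields $u'_{+} \propto w_{m+1}$ or $u'_{+} \propto -v_m$.

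Finally, I would rule out the alternative $u'_{-} \propto -v_m$ (and likewise for $u'_{+}$). Such a choice places $w_{m-1} = w_m - v_m$ in the relative interior of the normal cone of $R_{-}$ spanned by $w_m$ and $-v_m$, forcing $F(Q_*, w_{m-1}) = R_{-}$, so $G_{m-1}$ would coincide with the ridge and have dimension $n - 2$. For $m \geq 2$, this contradicts Lemma~\ref{simplex-face}'s description of $G_{m-1}$ via a direct comparison of direction spaces: the direction space of $R_{-}$ omits $v_m$, whereas the Minkowski-form direction space of $G_{m-1}$ contains vectors of the shape $v_j - v_m$. For the boundary index $m = 1$ (where $G_0$ is not covered by Lemma~\ref{simplex-face}), one uses the \sym-invariance of $Q_*$ to transport the argument via an even permutation that brings the configuration into the $m \geq 2$ range. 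Combined with the symmetric argument for $u'_{+}$ (handling $m = n-2$ analogously), this gives $u'_{-} \propto w_{m-1}$ and $u'_{+} \propto w_{m+1}$, so $G_{m-1}$ and $G_{m+1}$ are facets and the desired equalities $R_{\mp} = G_m \cap G_{m \mp 1}$ follow. The main obstacle is precisely this exclusion of the $-v_m$ candidate at the boundary indices $m = 1$ and $m = n-2$: a direct direction-space contradiction via Lemma~\ref{simplex-face} is unavailable for $G_0$ and $G_{n-1}$, and one must carefully construct an even (orientation-preserving) permutation in \sym{} that transports the configuration to a case where Lemma~\ref{simplex-face} does apply.
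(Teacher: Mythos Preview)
Your normal-cone analysis correctly narrows the second facet normal through $R_- = F(G_m, w_{m-1})$ to $w_{m-1}$ or $-v_m$ (for $R_+$ the alternative is $v_{m+1}$, not $-v_m$ as you wrote). The gap is the exclusion of the alternative at the boundary indices. Your contradiction for $m \geq 2$ invokes Lemma~\ref{simplex-face} for $G_{m-1}$, but that lemma covers only $G_1, \dots, G_{n-2}$, so $m = 1$ (and symmetrically $m = n-2$ for $R_+$) remains open. The proposed fix---an even permutation in $\sym$ transporting the configuration to the $m \geq 2$ range---cannot work: any $\rho \in \sym$ sends $w_1 = v_0 + v_1$ to another sum of \emph{two} vertices $v_{\sigma(0)} + v_{\sigma(1)}$, so $\rho\,G_1$ is still of type $m' = 1$ in the relabeled system, and $G_0 = F(Q_*, v_0)$ lies in the $\sym$-orbit of single-vertex normals, which is disjoint from the orbits of $w_1, \dots, w_{n-2}$. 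No permutation brings you to a face where Lemma~\ref{simplex-face} applies. For $n = 3$ the only value is $m = 1 = n-2$, and your argument covers nothing.

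The paper avoids this by never appealing to Lemma~\ref{simplex-face} for the neighboring face $G_{m\pm 1}$. To rule out $v = v_{m+1}$ it uses a direct support-function comparison: setting $w = w_m - v_m + v_{m+1}$, which is a $\sym$-image of $w_m$, the face $F(Q_*, w)$ is congruent to $G_m$ itself, and its vertex $a_m v_{m+1} - b_m v_{m+2} + s_m w$ has strictly larger inner product with $v_{m+1}$ than any point of $R_+$ (here $s_m \geq 0$ is used). The remaining candidates $-v_{m+1}$, $\pm w_m$, $-w_{m+1}$ are eliminated by similar but simpler comparisons. This is uniform in $m \in \{1, \dots, n-2\}$ and uses only the known structure of $G_m$ and its $\sym$-orbit. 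One further small point: your direction-space contradiction for $m \geq 2$ tacitly assumes $b_{m-1} > 0$; when $b_{m-1} = 0$ one should argue by dimension instead, since then $\dim G_{m-1} \leq m-1 < n-2 = \dim R_-$.
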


\begin{proof}
We only consider the case of $F(G_m,w_{m+1})$.
Since $F(G_m,w_{m+1})$ is an $(n-2)$-face of $Q_*$, we have
$$
F(G_m,w_{m+1})=F(Q_*,w_m)\cap F(Q_*,v)
$$
where $v=\sum_{i\in I}v_i$ for a proper subset
$I\subset\{0,\dots,n\}$.

On the other hand, $v$ is orthogonal to the affine hull of $F(G_m,w_{m+1})$. Therefore 
$$
v=s  w_m+t w_{m+1}=s(v_0+\dots+v_m)-t(v_{m+2}+\dots+v_n)
$$ 
for $s,t\in\R$. We deduce that $v\in\{\pm w_m,\pm w_{m+1},\pm v_{m+1}\}$. Readily $v\neq w_m$.
Since $Q_*$ is $n$-dimensional, and $F(G_m,w_{m+1})\subset F(Q_*,w_m)$, we have $v\neq -w_m$.
Next, $v\neq-v_{m+1}$ because
$a_m v_0-b_mv_{m+2}+s_m w_m\in F(G_m,w_{m+1})$ 
and $a_m v_0-b_mv_{m+1}+s_m w_m\in G_m$ 
but (\ref{vpq0}) implies that
\begin{eqnarray*}
h(Q_*,-v_{m+1})&\geq& -v_{m+1}\cdot (a_mv_0-b_mv_{m+1}+s_mw_m)\\
&>&  -v_{m+1}\cdot (a_mv_0-b_mv_{m+2}+s_mw_m).
\end{eqnarray*}
Next, $v\neq v_{m+1}$ because for $w=w_m-v_m+v_{m+1}=\sum_{i\in I}v_i$ 
corresponding to $I=\{0,\dots,m-1,m+1\}$, we have $a_mv_{m+1}-b_mv_{m+2}+s_mw\in F(Q_*,w)$,
and $s_m\geq 0$  gives
\begin{eqnarray*}
h(Q_*, v_{m+1})&\geq& v_{m+1} \cdot (a_mv_{m+1}-b_mv_{m+2}+s_m(w_m-v_m+v_{m+1}))\\
&>&  v_{m+1}\cdot (a_mv_0-b_mv_{m+2}+s_mw_m).
\end{eqnarray*}
Finally, $G_m= F(Q_*,w_m)$ is $(n-1)$-dimensional and 
$F(G_m,w_{m+1})$ is $(n-2)$-dimensional. Thus
$$
h(Q_*, -w_{m+1})\geq h({G_m},-w_{m+1})>h({F(G_m,w_{m+1})},-w_{m+1}),
$$
and hence $v\neq-w_{m+1}$. 
Thus $v=w_{m+1}$. 
\end{proof}

\begin{prop}
\label{ZStwo-faces} For $n=3$,
there exist $a_0,b_0,c_0\geq 0$ with the following properties.
If $S$ is a basic three-dimensional simplex and  $u,v\in\R^3\backslash\{o\}$ are such that $E=F(S,v)$ and
$E'=F(S,-v)$ are edges and $F=F(S,u)$ is a facet, then
\begin{eqnarray*}
F(\oZ S,u) &\text{ is a translate of }&a_0\,F+c_0(-F),\\
F(\oZ S,-u) &\text{ is a translate of }&b_0(-F)+c_0\, F,\\
F(\oZ S,v) &\text{ is a translate of }&a_0\,E+b_0\,E'.
\end{eqnarray*}
In addition, $c_0=0$ if and only if $\,\oZ S=\oZ_{a_0, b_0} S$.
\end{prop}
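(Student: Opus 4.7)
The plan is to exploit the $\slnz$-equivariance and translation invariance of $\oZ$ to reduce to the canonical case $S=T=T_3$, and then pass to the regularized picture $Q_*=\alpha\,\oZ T$ from Section \ref{secn-2}, which is $\sym$-invariant and hence centered at the origin. By Lemma \ref{dimTco}, $Q_*$ is three-dimensional whenever $\oZ T\neq\{o\}$; in the trivial case $a+b=0$ already handled in Section \ref{secnbigfacets} one has $\oZ T=\{o\}$ and the conclusion holds with $a_0=b_0=c_0=0$, so I assume $a+b>0$ throughout.

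For the third (edge) claim, I apply Lemma \ref{simplex-face} at $m=1$ to obtain $F(Q_*,w_1)=a_1\,F(T_*,w_1)+b_1\,F(-T_*,w_1)+s_1 w_1$ for some $a_1,b_1\ge 0$ and $s_1\ge 0$. Under the identification $v=w_1$, one has $E=F(T_*,w_1)=[v_0,v_1]$ and $F(-T_*,w_1)=-F(T_*,-w_1)=-[v_2,v_3]$, which is a translate of $E'=[v_2,v_3]$ (both are unit-length segments in the direction $v_3-v_2$). Setting $a_0:=a_1$ and $b_0:=b_1$ therefore yields the third claim, and by the $\sym$-transitive action on the edges of $T$ it extends to any edge direction.

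The core of the proof is the analysis of $F(Q_*,w_2)$, where $F=T^+:=F(T_*,w_2)=[v_0,v_1,v_2]$ and $T^-:=-T^+$. The $\sym$-stabilizer of $w_2$ acts on the plane perpendicular to $w_2$ as threefold rotation, so $F(Q_*,w_2)$ is threefold symmetric there. By Lemma \ref{equiTn} its in-plane edge outer normals are projections of facet normals of $T_*-T_*$, which form exactly six directions at $60^\circ$ intervals in this plane, and any threefold-symmetric convex polygon with edges in these directions must be a translate of $\lambda\,T^++\mu\,T^-$ for some $\lambda,\mu\ge 0$ (possibly degenerating to a triangle or point). Matching the edge shared with the adjacent square facet $F(Q_*,w_1)$ then pins down $\lambda$: from the polygonal side this edge has length $\lambda\,|v_0-v_1|$ (coming from the edge of $\lambda T^+$ opposite $v_2$), while from the parallelogram side Lemma \ref{simplex-face} gives $a_1\,|v_0-v_1|$, forcing $\lambda=a_0$. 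Setting $c_0:=\mu$ establishes the first claim. An entirely analogous analysis for $F(Q_*,-w_2)=\lambda' T^++\mu' T^-$, matching with the $\sym$-conjugate square facet $F(Q_*,v_0+v_3)$, forces $\mu'=b_0$.

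The main obstacle is to show $\lambda'=c_0$, since $\sym=A_4$ does not interchange up-type facets $F(Q_*,-v_i)$ with down-type facets $F(Q_*,v_i)$. When $c_0>0$, the hexagon $F(Q_*,w_2)$ has three extra edges whose outer in-plane normals are $+\mathrm{proj}(v_i)$ for $i\in\{0,1,2\}$; an inspection of the cuboctahedral normal fan rules out sharing these with any square-type facet of $Q_*$, so they must be shared with the down-type triangular facets $F(Q_*,v_i)$. By $\sym$-symmetry $F(Q_*,v_i)$ has the same shape as $F(Q_*,v_3)=F(Q_*,-w_2)=\lambda' T^++\mu' T^-$ (with triangles taken opposite $v_i$); computing the shared edge from this side gives length $\lambda'\,|v_0-v_1|$, hence $\lambda'=c_0$. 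When $c_0=0$, the triangle $F(Q_*,w_2)$ uses up its three edges on the three adjacent squares, and the symmetric argument applied to $F(Q_*,v_i)$ leaves no room for extras there, forcing $\lambda'=0=c_0$. For the iff, ``only if'' follows by direct computation since $F(\oZ_{a_0,b_0} T,u)$ is a translate of $a_0 F$; for ``if'', with $c_0=0$ every facet of $Q_*$ matches the corresponding facet of $\alpha(a_0\bar T-b_0\bar T)=a_0T_*-b_0T_*$, and since both polytopes are centered at the origin by $\sym$-invariance they coincide, yielding $\oZ T=\oZ_{a_0,b_0}T$ via Proposition \ref{disc-steiner}.
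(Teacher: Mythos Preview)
Your approach---analysing each face of $Q_*$ via its $\sym$-stabilizer and then matching shared-edge lengths between neighbouring faces---is genuinely different from the paper's. The paper instead proves a single global combinatorial fact (every vertex of $Q_*$ lies in some square face $F(Q_*,v_i+v_j)$), combines this with Lemma~\ref{simplex-face} to write down the full vertex set of $Q_*$ explicitly, and then simply reads off $F(Q_*,\pm v_0)$ and $F(Q_*,w_1)$ by direct computation.

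Your argument has a real gap in the adjacency claims. When you ``match the edge shared with the adjacent square facet $F(Q_*,w_1)$'' to get $\lambda=a_0$, you are assuming that $F(Q_*,w_2)$ and $F(Q_*,w_1)$ actually share an edge. The only facet normals of $T_*-T_*$ projecting (in the plane orthogonal to $w_2$) to $-\mathrm{proj}(v_2)$ are $w_1$ and $-v_2$, and nothing you have written excludes the possibility that the neighbour is $F(Q_*,-v_2)$. Lemma~\ref{mm+1m-1} does supply this adjacency, but only under the hypothesis $a_1,b_1>0$, which you neither assume nor handle separately. The problem is sharper for $\lambda'$: the edge of $F(Q_*,w_2)$ with in-plane normal $+\mathrm{proj}(v_0)$ could be shared with $F(Q_*,v_0)$ \emph{or} with the square $F(Q_*,v_0+v_3)$, since both normals project to $+\mathrm{proj}(v_0)$. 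Your appeal to ``an inspection of the cuboctahedral normal fan'' does not decide this: Lemma~\ref{equiTn} only says the facet normals of $Q_*$ lie among those of $T_*-T_*$; it does not force the normal fan of $Q_*$ to agree with (or refine) that of the cuboctahedron, so adjacencies in $Q_*$ need not match those in $T_*-T_*$. If the neighbour were the square $F(Q_*,v_0+v_3)$, the edge length on that side is $b_1|v_1-v_2|$, giving $\mu=b_1$ rather than $\mu=\lambda'$, and your chain of equalities collapses. The paper's vertex claim is exactly the device that pins down the combinatorics of $Q_*$ and makes all of these adjacencies automatic; without it (or a substitute), the edge-matching strategy is incomplete.
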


\begin{proof} We may assume that $S=T_3$ and write $T=T_3$. If \,$\oZ T=\{o\}$, then we have $a_0=b_0=c_0=0$. Otherwise, Lemma~\ref{dimTco} implies that $\oZ T$ is three-dimensional.
Note that by Lemma~\ref{equiTn} the facet normals of $Q_*$ are a subset of $\{\pm v_i: i=0,1,2,3\} \cup \{v_i+v_j: i\ne j\}$.

We claim that  if $z$ is a vertex of $Q_*$, then 
\begin{equation}
\label{vertex}
z\in F(Q_*, v_i+v_j) \text{ for some } i\neq j. 
\end{equation}
To prove (\ref{vertex}), we first assume that $a_1,b_1>0$. Then $F(Q_*,w_1)$ is two-dimensional. 
If the vertex $z$ lies in $F(Q_*,v_i)\cap F(Q_*,v_j)$ for $i\neq j$, then
$$F(Q_*,v_i+v_j)\subset \aff F(Q_*,v_i)\cap \aff F(Q_*,v_j).$$
This is not possible since $a_1,b_1>0$. 
Similarly, we see that for $i\neq j$ the vertex
$z\not\in F(Q_*,-v_i)\cap F(Q_*,-v_j)$. Since $z$ is contained in at least three two-dimensional faces of $Q_*$, $z\in F(Q_*,v_i+v_j)$
for some $i\neq j$.

Therefore we assume that either $a_1=0$ or $b_1=0$, that is, we have
$\dim F(Q_*,w_1)\leq 1$. 
In this case, we deduce from Lemma~\ref{equiTn} that any  exterior normal to  a two-dimensional face of $Q_*$ is an exterior normal  to a two-dimensional face of either $T_*$ or $-T_*$.  Hence
$$
Q_*=s\,T_*\cap (-t\,T_*) \mbox{ \ for $s,t>0$}.
$$
If $z$ is a vertex of $Q_*$, then it is not the midpoint of a segment contained in $Q_*$. Thus $z$ is contained in an edge of either $s\,T_*$ or $-t\,T_*$. Thus $z$ is a vertex of $F(Q_*,v_i+v_j)$ for some $i\neq j$, concluding the proof of (\ref{vertex}).

Recall that $s_1\geq 0$ if $a_1+b_1>0$. In addition $s_1>0$ if $a_1=b_1=0$ as $o\in{\rm int}\,Q_*$. 
We deduce from Lemma~\ref{simplex-face} and (\ref{vertex}), that
\begin{equation}\label{3DZS}
Q_*=[a_1 v_i-b_1 v_j+s_1(v_i+v_k):\,\{i,j,k\}\subset\{0,1,2,3\}].
\end{equation}
Thus it follows by a short calculation from (\ref{vpq0}) and (\ref{3DZS}) that  
\begin{align*}
F(Q_*,-v_0)&=a_1[v_0,v_1,v_2]-s_1[v_0,v_1,v_2]+ (b_1+s_1)(v_0+v_1+v_2),\\
 F(Q_*,v_0)&=-b_1[v_1,v_2,v_3]+s_1[v_1,v_2,v_3]+(a_1+s_1)v_0,\\
 F(Q_*,w_1)&=a_1[v_0,v_1]-b_1[v_2,v_3]+s_1(v_0+v_1). 
\end{align*}
Therefore we may choose $a_0=a_1$, $b_0=b_1$ and $c_0=s_1$. Since $F(T_*, -v_0)$ is a two-dimensional face and $F(T_*, \pm w_1)$ are edges, this concludes the proof.
\end{proof}

\subsection{Proof of Theorem~\ref{equi} for $\mathbf {n=3}$}
\label{secn3}

By Proposition~\ref{equi2},
 there exist $a,b\geq 0$ such that 
\begin{equation*}
\oZ P=\oZ_{a,b} P
\end{equation*}
for lower dimensional $P\in \lpth$. 
Let $S_1,S_2,S_3$ with $S_1=T_3$ be the basic simplices
triangulating the prism $\cylt=T_2+[0,e_3]$ defined in (\ref{Ttildedissect}). 
Corollary~\ref{equitildeTn-1cor}
 yields
\begin{equation}
\label{Ttildedissect00}
\oZ_{a,b} S_1 +\oZ_{a,b} S_2+\oZ_{a,b} S_3= \oZ S_1+ \oZ S_2+\oZ S_3.
\end{equation} 
We observe that $F(S_1,-e_3)=[o,e_1,e_2]$, that $F(S_2,-e_3)=[e_1,e_2]$, that
$F(S_2,e_3)$ is a translate of $[o,e_1]$ and that $F(S_3,e_3)$ is a translate of $ [o,e_1,e_2]$.
Set $G=F\left(\sum_{i=1}^{3} \oZ_{a,b} S_i,-e_3\right)$. Since $[o,e_1]$ is a translate of $[o,-e_1]$, we deduce that
\begin{equation}\label{G'sum3}
G \text{ is a translate of }
a[o,e_1,e_2]+b[o,-e_1,-e_2]+a[e_1,e_2]+b[o,e_1].
\end{equation}
By (\ref{Ttildedissect00}), we  also have $G=F\left(\sum_{i=1}^{3} \oZ S_i,-e_3\right)$. Hence Proposition~\ref{ZStwo-faces}  implies that $G$ is a translate of 
\begin{equation}
\label{Gsum3}
(a_0+c_0)[o,e_1,e_2]+(b_0+c_0)[o,-e_1,-e_2]+a_0[e_1,e_2]+b_0[o,e_1].
\end{equation}
Hence $F(G,-e_1)$ is a translate of $a[o,e_2]$ by (\ref{G'sum3}) and $F(G,-e_1)$ is a translate of $(a_0+c_0)[o,e_2]$ by (\ref{Gsum3}). Thus  $a=a_0+c_0$. 
From (\ref{G'sum3}) and (\ref{Gsum3}) we also obtain that $F(G,e_1)$ is a translate of $b[o,-e_2]$ and $F(G,e_1)$ is a translate of $(b_0+c_0)[o,-e_2]$, respectively. 
Hence $b=b_0+c_0$. 
Finally, we obtain that $F(G,-e_2)$ is a translate of $(a+b)[o,e_1]$ on the one hand, and is a translate of $(a_0+b_0+c_0)[o,e_1]$ on the other hand. 
Hence
$$
a_0+b_0+c_0=a+b=a_0+b_0+2c_0.
$$  
Therefore $c_0=0$, $a=a_0$ and $b=b_0$. Thus
$\oZ T_3 =\oZ_{a,b} T_3$ follows from Proposition~\ref{ZStwo-faces}.

\subsection{$\mathbf{\oZ T_n}$  for $\mathbf{n\geq 4}$}
\label{secnbig}

Let $\oZ$ be an $\slnz$ equi\-variant and translation invariant  Minkowski valuation on $\lpn$. Let $n\geq 4$. 

\begin{prop}
\label{ZTn4}
If  Theorem~\ref{equi} holds in $\R^{n-1}$, then
there exist $a_0,b_0\geq 0$ such that 
$$\oZ S=\oZ_{a_0,b_0} S$$
for every basic $n$-simplex $S$.
\end{prop}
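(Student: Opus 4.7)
The plan is as follows. Since $\oZ P=\oZ_{a,b}P$ for all lower-dimensional $P\in\lpn$ (with $a+b=2c$) by the inductive hypothesis, and the case $a+b=0$ is already handled (the valuation is simple), assume $a+b>0$ so that $\oZ T$ is $n$-dimensional by Lemma \ref{equiTn}. Lemma \ref{simplex-face} then gives
$$F(Q_*,w_m)=a_m F(T_*,w_m)+b_m F(-T_*,w_m)+s_m w_m,\qquad m=1,\ldots,n-2,$$
with $a_m,b_m\geq 0$ and $s_m\geq 0$, where $Q_*=\alpha\,\oZ T$. The task is to show that all $a_m$ coincide (call the common value $a_0$), all $b_m$ coincide (call it $b_0$), all $s_m$ vanish, and that the extremal facets $F(Q_*,v_0)$ and $F(Q_*,w_{n-1})$ agree with those of $a_0 T_*-b_0 T_*$.

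The core step is an adjacency argument. For $m\in\{2,\ldots,n-2\}$ with $a_m,b_m>0$, Lemma \ref{mm+1m-1} ensures that $G_{m-1}$ is also a facet described by Lemma \ref{simplex-face}, and equating the two expressions for the shared $(n-2)$-face $F(G_m,w_{m-1})=F(G_{m-1},w_m)$ gives
$$a_m[v_0,\ldots,v_{m-1}]-b_m[v_{m+1},\ldots,v_n]+s_m w_m=a_{m-1}[v_0,\ldots,v_{m-1}]-b_{m-1}[v_{m+1},\ldots,v_n]+s_{m-1}w_{m-1}.$$
Since $[v_0,\ldots,v_{m-1}]$ and $[v_{m+1},\ldots,v_n]$ lie in linearly independent affine flats, a support-function comparison (in the spirit of Lemma \ref{summand}) forces $a_m=a_{m-1}$, $b_m=b_{m-1}$, and then $s_m w_m=s_{m-1}w_{m-1}$; the non-parallelism of $w_m,w_{m-1}$ forces $s_m=s_{m-1}=0$. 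For $n\geq 4$ the chain $m=2,3,\ldots,n-2$ stays within $\{1,\ldots,n-2\}$, so iteration produces common values $a_0,b_0$ with every $s_m=0$, and hence $F(Q_*,w_m)=F(a_0 T_*-b_0 T_*,w_m)$ for all intermediate $m$.

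For the extremal facet $G_0=F(Q_*,v_0)$, the adjacency $F(G_1,v_0)=F(G_0,w_1)$ combined with the values just obtained yields $F(G_0,w_1)=a_0 v_0-b_0[v_2,\ldots,v_n]$. Because $G_0$ is a summand of $F(Q,v_0)=a[v_1,\ldots,v_n]-b[v_1,\ldots,v_n]$ from (\ref{TQWn3}) and is invariant under the stabilizer of $v_0$ in $\sym$ (which permutes $v_1,\ldots,v_n$), classifying the $\sym$-invariant summands of this generalized difference body with the prescribed face pins down $G_0=a_0 v_0-b_0[v_1,\ldots,v_n]=F(a_0 T_*-b_0 T_*,v_0)$; the analogous statement for $G_{n-1}$ follows from the reflection symmetry in $\sym$. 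Thus $Q_*$ and $a_0 T_*-b_0 T_*$ share all facets, and both have centroid at the origin by $\sym$-invariance, so $Q_*=a_0 T_*-b_0 T_*$, i.e.\ $\oZ T=\oZ_{a_0,b_0}T$. For any basic $n$-simplex $S=\phi T+z$ with $\phi\in\slnz$, $z\in\Z^n$, $\slnz$ equivariance and translation invariance then give $\oZ S=\oZ_{a_0,b_0}S$.

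The hard part is two-fold. First, the degenerate cases in which some $a_m$ or $b_m$ vanishes break the adjacency chain and must be handled separately, by a direct inspection of the formula in Lemma \ref{simplex-face} combined with additional symmetry considerations. Second, the required characterization of $\sym$-invariant Minkowski summands of the generalized difference body $a[v_1,\ldots,v_n]-b[v_1,\ldots,v_n]$ is not immediately covered by Lemma \ref{summand}, since its two simplex summands share the same $(n-1)$-dimensional affine flat rather than spanning linearly independent subspaces; an auxiliary argument tailored to this $\sym$-symmetric setting will be needed.
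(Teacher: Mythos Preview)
Your adjacency-chain argument for the non-degenerate case (where some $a_m,b_m>0$, hence by Lemma~\ref{mm+1m-1} all $F(Q_*,w_m)$ are facets) is essentially the paper's Case~2, and it works. But two points deserve comment.

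\medskip
\textbf{The extremal facets.} Your treatment of $G_0$ and $G_{n-1}$ is needlessly indirect. You do not need to characterize the $\sym$-invariant summands of $a[v_1,\ldots,v_n]-b[v_1,\ldots,v_n]$ at all. Once you know $s_1=0$ and $a_1=a_0$, $b_1=b_0$, Lemma~\ref{mm+1m-1} tells you that the $(n-2)$-face $F(G_1,w_0)=a_0\{v_0\}-b_0[v_2,\ldots,v_n]$ lies in $F(Q_*,w_0)$. That determines $h(Q_*,w_0)$ outright, and it equals $h(a_0T_*-b_0T_*,w_0)$. The same works at $w_{n-1}$. By the $\sym$-invariance of $Q_*$ and the fact that every facet normal of $Q_*$ is in the $\sym$-orbit of some $w_k$, the support functions of $Q_*$ and $a_0T_*-b_0T_*$ agree on all relevant directions, so the bodies coincide. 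No summand analysis is needed.

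\medskip
\textbf{The degenerate case.} This is where your proposal has a genuine gap. ``Direct inspection combined with additional symmetry considerations'' is not an argument, and the situation is subtler than that suggests. When $a_mb_m=0$ for \emph{every} $m=1,\ldots,n-2$, none of the $F(Q_*,w_m)$ is a facet, so the adjacency chain never starts and Lemma~\ref{simplex-face} gives you almost nothing. The paper handles this case with a separate, self-contained argument: since only $\pm v_i$ survive as facet normals, one has $Q_*=sT_*\cap(-tT_*)$ for some $s,t>0$. The crux is then to show that either $-tT_*\subset sT_*$ or $sT_*\subset -tT_*$ (equivalently $s\ge nt$ or $t\ge ns$). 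If neither containment held, the face $F(Q_*,v_0+v_1)=sT_*\cap F(-tT_*,v_0+v_1)$ would be $(n-2)$-dimensional yet have its vertices truncated by facets of $sT_*$, contradicting the shape forced by Lemma~\ref{simplex-face} (which says this face must be a translate of $b_1[-v_2,\ldots,-v_n]$). Hence $Q_*$ is itself $sT_*$ or $-tT_*$, and (\ref{tsn}) holds with one of $a_0,b_0$ equal to zero. You need to supply this or an equivalent argument; the degenerate case cannot be absorbed into the chain.
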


\begin{proof}
Since $\oZ$ is $\slnz$ equivariant and translation invariant, it suffices to show there are $a_0,b_0\geq 0$ such that 
$$
\oZ T=\oZ_{a_0,b_0} T
$$
where $T=T_n$. 
By Lemma~\ref{equiTn}, we may assume that 
$\oZ T$ is an $n$-dimensional polytope. As before we set  $T_*=\alpha \bar T$ and $Q_*= \alpha \oZ T$. Thus we have to show that
that there are $a_0,b_0\geq 0$ such that
\begin{equation}\label{tsn}
Q_*=a_0 T_* -b_0 T_*.
\end{equation}

First,  let $\dim(F(Q_* ,w_m))\leq n-2$ for $m=1,\dots,n-2$. \\
Since Lemma~\ref{equiTn} implies that $Q_*$ is a polytope whose facet  normals are facet normals of $T_*-T_*$ and since $Q_*$  is invariant under the action of
$\sym$, we deduce that there are $s,t>0$ such that
\begin{equation}
\label{stXin}
Q_*=s\,T_*\cap(-t\,T_*).
\end{equation}
We claim that either
\begin{equation}
\label{stXinclaim}
\mbox{$s\geq nt$ or $t\geq ns$,}
\end{equation}
or in other words, either $-t\, T_*\subset s\, T_*$ or $s\,T_*\subset -t\, T_*$.
Suppose that (\ref{stXinclaim}) does not hold, that is,
\begin{equation*}
\label{stXinclaimno}
\frac sn<t<n\,s.
\end{equation*}
First, assume that $s\geq t$. Then $F(-t\, T_*,v_0+v_1)=[- t\,v_2,\dots,-t\,v_n]$ satisfies
$$
\cen(F(-t\,T_*,v_0+v_1))=t \,\frac{v_0+v_1}{n-1}\in t\Int T_*\subset s\Int  T_*.
$$
Thus (\ref{stXin}) implies that
\begin{equation}
\label{stXicut}
F(Q_* ,v_0+v_1)= s\,T_*\cap F(-t \,T_*,v_0+v_1)\,\mbox{ has dimension $n-2$. }
\end{equation}
We have $a_1=0$ and $b_1>0$, and
\begin{equation}
\label{stXitranslate}
F(Q_* ,v_0+v_1)\,\mbox{  is a translate of  $b_1 [-v_2,\dots,-v_n].$}
\end{equation}
However, as $s<nt$, the vertices $-tv_2,\dots,-tv_n$ of $F(-t\,T_*,v_0+v_1)$ are cut off by the
facets $F(s\,T_*,-v_i)$ of $s\,T_*$ for $i=2,\dots,n$. Therefore  (\ref{stXicut}) implies that
$F(Q_* ,v_0+v_1)$ has some $(n-3)$-dimensional faces with exterior normals
$-v_i$  for $i=2,\dots,n$. This contradicts (\ref{stXitranslate}), and in turn proves (\ref{stXinclaim}) if $s\geq t$.
Finally, the  case $s\leq t$ of (\ref{stXinclaim}) can be proved using the same argument for 
$$
F(Q_* ,-v_0-v_1)=F(s\,T_*,-v_0-v_1)\cap (-t\,T_*).
$$
It follows from (\ref{stXinclaim}) that either $Q_*=s\,T_*$, or $Q_*=-t\,T_*$. Thus (\ref{tsn}) holds in this case.

Second, let $F(Q_* ,w_m)$ be $(n-1)$-dimensional for some $m=1,\dots,n-2$. \\
It follows from  Lemma~\ref{mm+1m-1} that $\dim (F(Q_* ,w_k))= n-1$ for $k=0,\dots,n-1$.
It also follows from  Lemma~\ref{mm+1m-1} that if $m=1,\dots,n-2$, then
\begin{align}\label{FTm-1n-2}
F(Q_*,w_{m-1})&\cap F(Q_* ,w_m)\\
\nonumber
&=a_m [v_0,\dots,v_{m-1}]-b_m[v_{m+1},\dots,v_n] +s_m w_m,\\[4pt]
\label{FTm+1n-2}
F(Q_*,w_m) &\cap F(Q_* ,w_{m+1})\\
\nonumber
&=a_m[v_0,\dots,v_{m}]-b_m[v_{m+2},\dots,v_n]+s_m w_m.
\end{align}
If $m\geq 2$, then (\ref{FTm+1n-2}) applied to $F(Q_*,w_{m-1})\cap F(Q_*,w_m)$ shows that
\begin{align}
\label{FTm-1n-2sm-1}
F(Q_*,& w_{m-1})\cap F(Q_*, w_m)\\
\nonumber
&=a_{m-1}[v_0,\dots,v_{m-1}]-b_{m-1}[v_{m+1},\dots,v_n] +s_{m-1}w_{m-1}.
\end{align}
Comparing (\ref{FTm-1n-2}) and (\ref{FTm-1n-2sm-1}) 
implies $a_{m-1}=a_m$, $b_{m-1}=b_m$ and $s_m=s_{m-1}=0$.

Similar arguments based on  (\ref{FTm+1n-2}) prove that 
if $m\leq n-3$, then 
$a_{m+1}=a_m$, $b_{m+1}=b_m$ and $s_{m+1}=0$. Continuing step by step, 
we conclude that $a_1= \dots=a_{n-1}$, $b_1=\dots =b_{n-1}$ and $s_1=\dots=s_{n-1}=0$.

Set
$a_0=a_1$ and $b_0=b_1$. Since
 $s_k=0$ for $k=1,\dots,n-1$,  we obtain from   Lemma \ref{simplex-face} that
$$
F(Q_*, w_k)= F(a_0 T_* -b_0 T_*, w_k)
$$
for $k=1,\dots,n-2$.
It  follows from  Lemma~\ref{mm+1m-1} that
$$h(Q_*,w_k)=h(a_0 T_*-b_0 T_*, w_k)$$
for $k=0$ and $k=n-1$ as well. By symmetry,  the support functions
of $Q_*$ and $a_0T_*-b_0T_*$ agree for any possible facet normal of either polytope. Thus we conclude that (\ref{tsn}) holds.
\end{proof}

\subsection{Proof of Theorem~\ref{equi} for $\mathbf{n\geq 4}$}
\label{secnatleast4}
Let $\oZ: \lpn \to\ckn$ be an $\slnz$ equi\-variant and translation invariant  Minkowski valuation.
We prove  Theorem~\ref{equi} by induction on the dimension $n\geq 3$. The case $n=3$ is settled in Section~\ref{secn3}. 
Therefore we assume that $n\geq 4$ and  that Theorem~\ref{equi} holds in dimension $(n-1)$. 
In particular,
there exist $a,b\geq 0$ such that
\begin{equation*}
\label{equiTn4}
\oZ P=\oZ_{a,b} P
\end{equation*}
for lower dimensional $P\in \lpn$ (where $\oZ_{a,b}$ is defined in (\ref{ozab})).  In addition, $a=b=0$ implies that $\oZ P=\{o\}$ for $P\in\lpn$,
 and $a+b>0$ implies that 
$\oZ T_n$ is an $n$-dimensional polytope. 

We may assume that $a+b>0$, and hence  Proposition~\ref{ZTn4} implies the existence of $a_0,b_0\geq 0$
with $a_0+b_0>0$ such that if $S$ is a basic $n$-simplex, then
\begin{equation}
\label{equiS4}
\oZ S =\oZ_{a_0,b_0} S.
\end{equation}
We compare $\oZ$ and $\oZ_{a,b}$.

Let $S_1,\dots,S_n$ with $S_1=T_n$ be the basic simplices
triangulating the prism $\cyl=T_{n-1}+[0,e_n]$ in (\ref{Ttildedissect}).  Corollary~\ref{equitildeTn-1cor}
implies that
\begin{equation}
\label{sumZZ'}
\sum_{i=1}^{n} \oZ_{a,b} S_i=\sum_{i=1}^{n} \oZ S_i.
\end{equation}
Let $1>r_3>\dots>r_{n-1}>0$ and 
$$
w=-e_n+e_1+e_2+\sum_{i=3}^{n-1}r_i e_i.
$$
It follows that $
F(S_i,-w)=\{e_n\}$ for $i=1,\dots,n$
and that 
$F(S_1,w)=[e_1,e_2]$, $F(S_2,w)=[e_1,e_2]$, and $F(S_i,w)=\{e_{i-1}\}$ for $i=3,\dots,n$.
We deduce from the definition of $\oZ_{a,b}$ and (\ref{equiS4})  that
\begin{align*}
F\big(\sum_{i=1}^{n} \oZ_{a,b} S_i,w\big)&\mbox{ is a translate of }2a\,[e_1,e_2],\\
F\big(\sum_{i=1}^{n} \oZ S_i,w\big)&\mbox{ is a translate of }2a_0\,[e_1,e_2],
\end{align*}
and hence $a=a_0$ follows from (\ref{sumZZ'}). Similarly
\begin{align*}
F\big(\sum_{i=1}^{n} \oZ_{a,b} S_i,-w\big)&\mbox{ is a translate of }2b\,[e_1,e_2],\\
F\big(\sum_{i=1}^{n} \oZ S_i,-w\big)&\mbox{ is a translate of }2b_0\,[e_1,e_2],
\end{align*}
and hence $b=b_0$. Therefore Corollary~\ref{BetkeKneserinvariance} implies that
$\oZ =\oZ_{a,b} $ on $\lpn$. 

\subsection{Proof of Theorem \ref{equii}}

Set $m_n=\lcm(2,\dots, n+1)$.
Proposition \ref{disc-steiner} implies that $m_n\, \dst(P)\in \Z^n$ for $P\in\lpn$. Hence, for integers $a,b\ge 0$ with $b-a\in m_n \,\Z$, the operator $\oZ$ defined by
$$P \mapsto a\,(P-\dst(P))+b\,({-P}+\dst(P))$$
maps $\lpn$ to $\lpn$. For the reverse direction, let $\oZ:\lpn \to \lpn$ be an $\slnz$ equivariant and translation invariant Minkowski valuation.  By Theorem \ref{contra}, we know that there are $a,b\ge 0$ such that
$$\oZ P= a\,(P-\dst(P))+b\,({-P}+\dst(P))$$
for every $P\in \lpn$.
Since $\oZ P\in \lpn$ for all $P\in\lpn$, setting $P=T_k$ and using Proposition \ref{disc-steiner} shows that 
$$a\big(T_k -\frac{e_1+\dots+e_k}{k+1}\big) + b\big(-T_k+\frac{e_1+\dots+e_k}{k+1}\big)\in\lpn.$$
Hence
$a+c/(k+1), -(a+c)+c/(k+1) \in \Z$ for $k=1,\dots, n$ with $c=b-a$. Thus $c=b-a \in m_n \,\Z$ and $a,b\in \Z$.

\section*{Acknowledgments} The authors thank Raman Sanyal for pointing out Corollary \ref{polynomiality-coefficient1} and its proof to them, Peter McMullen for an improvement of the proof of Lemma \ref{zonotope}  and Imre B\'ar\'any and P\'al Heged\H{u}s for valuable discussions. They also thank the referees for their helpful remarks.

The work of K\'aroly J.~B\"or\"oczky was supported, in part,  by the Hungarian Scientific Research Funds No 109789 and No 116451. The work of Monika Ludwig was supported, in part, by Austrian Science Fund (FWF) Project P25515-N25.

\footnotesize

\bigskip
\parindent=0pt
\begin{samepage}
K\'aroly J.~B\"or\"oczky\\
Alfr\'ed R\'enyi Institute of Mathematics\\
Hungarian Academy of Sciences\\
1053 Budapest, Re\'altanoda u. 13-15\\
Hungary\\
E-mail:	 carlos@renyi.hu
\end{samepage}

\bigskip

\begin{samepage}
Monika Ludwig\\
Institut f\"ur Diskrete Mathematik und Geometrie\\
Technische Universit\"at Wien\\
Wiedner Hauptstra\ss e 8-10/1046\\
1040 Wien, Austria\\
E-mail: monika.ludwig@tuwien.ac.at
\end{samepage}

\end{document}